\documentclass[onefignum,onetabnum]{siamart190516}

\usepackage{amssymb}%
\usepackage{dsfont}%



\usepackage{lipsum}
\usepackage{amsfonts}
\usepackage{graphicx}
\usepackage{epstopdf}
\usepackage{algorithmic}
\ifpdf
  \DeclareGraphicsExtensions{.eps,.pdf,.png,.jpg}
\else
  \DeclareGraphicsExtensions{.eps}
\fi


\newsiamremark{remark}{Remark}
\newsiamremark{hypothesis}{Hypothesis}
\crefname{hypothesis}{Hypothesis}{Hypotheses}
\newsiamthm{claim}{Claim}

\headers{Small-gain theory for infinite networks}{C.~Kawan, M.~Zamani}

\title{A small-gain theory for infinite networks via infinite-dimensional gain operators\thanks{This work was supported in part by the German Research Foundation (DFG) through the grant ZA 873/4-1 and the H2020 ERC Starting Grant
AutoCPS (grant agreement No. 804639).}}

\author{Christoph Kawan\thanks{Institute of Informatics, LMU Munich, 80538 M\"{u}nchen, Germany (\email{christoph.kawan@lmu.de}).}
\and Majid Zamani\thanks{Computer Science Department, University of Colorado Boulder, CO 80309, USA. M. Zamani is also
with the Institute of Informatics, LMU Munich, Germany (\email{majid.zamani@colorado.edu}).}}

\usepackage{amsopn}


\newtheorem{assumption}{Assumption}%

\ifpdf
\hypersetup{
  pdftitle={A small-gain theory for infinite networks via infinite-dimensional gain operators},
  pdfauthor={C.~Kawan and M.~Zamani}
}
\fi


\sloppy
\allowdisplaybreaks

\newcommand{\R}{\mathbb{R}}%
\newcommand{\UC}{\mathcal{U}}%
\newcommand{\N}{\mathbb{N}}%
\newcommand{\Z}{\mathbb{Z}}%
\newcommand{\IC}{\mathcal{I}}%
\newcommand{\KC}{\mathcal{K}}%
\newcommand{\LC}{\mathcal{L}}%
\newcommand{\PC}{\mathcal{P}}%
\newcommand{\ep}{\varepsilon}%
\newcommand{\unit}{\mathds{1}}%
\newcommand{\dist}{\mathrm{dist}}%
\newcommand{\id}{\mathrm{id}}%
\newcommand{\inner}{\mathrm{int}}%
\newcommand{\NC}{\mathcal{N}}%
\newcommand{\rmD}{\mathrm{D}}%
\newcommand{\tm}{\times}%
\newcommand{\esssup}{\operatorname*{ess\;sup}}%
\newcommand{\Fix}{\mathrm{Fix}}%

\begin{document}

\maketitle

\begin{abstract}
In this paper, we develop a new approach to study gain operators built from the interconnection gains of infinite networks of dynamical systems. Our focus is on the construction of paths of strict decay which are used for building Lyapunov functions for the network and thus proving various stability properties, including input-to-state stability. Our approach is based on the study of an augmented gain operator whose fixed points are precisely the points of decay for the original gain operator. We show that plenty of such fixed points exist under a uniform version of the no-joint-increase condition. Using these fixed points to construct a path of strict decay, in general, requires specific dynamical properties of associated monotone operators. For particular types of gain operators such as max-type operators and subadditive operators, these properties follow from uniform global asymptotic stability of the system induced by the original gain operator. This is consistent with former results in the literature which can readily be recovered from our theory.
\end{abstract}

\begin{keywords}
Nonlinear systems, small-gain theorems, infinite-dimensional systems, input-to-state stability, Lyapunov methods, large-scale systems, infinite networks%
\end{keywords}

\begin{AMS}
37B25; 37L15; 93D05; 93A15%
\end{AMS}

\section{Introduction}

Recent advances in computation and communication capabilities and low costs of sensors and actuators have led to the emergence of large-scale interconnected networks including traffic, transportation, social, and power networks, to name a few. Although these advances result in more sophisticated networks exhibiting emergent behaviors, we face new analysis and design challenges that we do not know how to solve with existing techniques. For example, stability properties of large-scale networks may deteriorate as the number of participating agents increases \cite{bamieh}.%

Infinite networks have been introduced as over-approximations of large-but-finite networks as worst-case scenarios. In the past few years, several results have been obtained for input-to-state stability (ISS) analysis of infinite networks with nonlinear components \cite{kawan,kawan2021lyapunov,mironchenko2021iss,mironchenko2021nonlinear,dashkovskiy2020stability}. In those results, the influence of any subsystem on other subsystems in the network is characterized by so-called gain functions. The gain operator constructed from these functions describes the interconnection structure of the network. The small-gain theorems proposed in \cite{kawan,kawan2021lyapunov,mironchenko2021iss,mironchenko2021nonlinear} and the construction of associated ISS Lyapunov functions are based on the assumption that a so-called path of strict decay exists
for the gain operator. Unfortunately, constructing these paths of strict decay is a challenging problem.%

In the small-gain theory for input-to-state stability of finite networks, a central tool used for the construction of a path of strict decay is a topological result known as the KKM lemma. More precisely, the KKM lemma is used to guarantee the existence of sufficiently many points of strict decay. A piecewise linear path of strict decay is then constructed via connecting certain points of strict decay by straight lines, see \cite{dashkovskiy2010small}. This construction breaks down in the infinite-dimensional ordered Banach space $(\ell_{\infty},\ell_{\infty}^+)$ on which the gain operator of an infinite network acts within the framework proposed in \cite{dashkovskiy2020stability,kawan2021lyapunov,mironchenko2021iss}. First, existing infinite-dimensional versions of the KKM lemma are not well suited to the geometry of $\ell_{\infty}$. Second, the piecewise linear construction is heavily based on local compactness. Hence, we need to develop a completely different approach to handle infinite networks.%

In \cite{kawan2021lyapunov}, the authors treated infinite networks in which the influences of the subsystems on each other are described by a max-type gain operator. This is the case if the influence on a fixed subsystem by its neighbors can be expressed as the maximum of the associated gain functions. In this setup, a small-gain theorem was proved for the construction of an ISS Lyapunov function for the network. The existence of a path of strict decay is here implied by the uniform global asymptotic stability of the discrete-time system induced by a slightly enlarged gain operator together with some uniform local Lipschitz estimates for the gain functions; in this case, the path can be constructed via the strong transitive closure of the (enlarged) gain operator $\Gamma:\ell_{\infty}^+ \rightarrow \ell_{\infty}^+$, which is given by%
\begin{equation*}
  Q(s) = \bigoplus_{k=0}^{\infty}\Gamma^k(s),
\end{equation*}
where $\oplus$ stands for the componentwise supremum of vectors in $\ell_{\infty}^+$. This approach originates from \cite{dashkovskiy2019stability}.%

In this paper, we study gain operators defined from nonlinear gains, which are not necessarily of maximum type. Instead, we consider gain operators defined via so-called monotone aggregation functions introduced in \cite{dashkovskiy2010small} for finite networks. A path of strict decay for the gain operator $\Gamma$ is a mapping $\sigma:\R_+ \rightarrow \ell_{\infty}^+$ whose essential property is that $\Gamma \circ \sigma \leq (\id + \rho)^{-1} \circ \sigma$ for a $\KC_{\infty}$-function $\rho$. For the enlarged gain operator $\Gamma_{\rho} := (\id + \rho) \circ \Gamma$, this simplifies to $\Gamma_{\rho} \circ \sigma \leq \sigma$. A necessary condition for the existence of $\sigma$ is the existence of sufficiently many points of decay for $\Gamma_{\rho}$. The main observation in this paper is that a point $s \in \ell_{\infty}^+$ is a point of decay for a gain operator $\Gamma$ if and only if it is a fixed point of the augmented gain operator%
$$\hat{\Gamma}(s) := s \oplus \Gamma(s),\quad \hat{\Gamma}:\ell_{\infty}^+ \rightarrow \ell_{\infty}^+.$$
Hence, we study the operator $\hat{\Gamma}$ and provide conditions under which the mapping%
\begin{equation*}
  \sigma(r) := \bigoplus_{k=0}^{\infty}\hat{\Gamma}^k(r\unit),\quad \sigma:\R_+ \rightarrow \ell_{\infty}^+%
\end{equation*}
is a path of strict decay for $\Gamma$, where $\unit \in \ell_{\infty}^+$ is the vector whose components are all equal to $1$.%

A sufficient condition for $\sigma$ to be well-defined and satisfy some necessary uniformity assumptions is that the discrete-time system induced by $\hat{\Gamma}$ is uniformly globally stable. This, in turn, is implied by various uniform small-gain conditions studied before in the literature for both finite and infinite networks, see, e.g.~\cite{mironchenko2021nonlinear,ruffer2010monotone}. From the construction of $\sigma$, it turns out that $\sigma(r)$ is the minimal fixed point of the monotone operator $\Gamma_r(s) = r\unit \oplus \Gamma(s)$. Our main result shows that a locally Lipschitz continuous single-valued selection of the set-valued map $r \mapsto \Fix(\Gamma_r)$, where $\Fix(\Gamma_r)$ is the fixed point set of $\Gamma_r$, is a path of strict decay provided that the system induced by a slightly enlarged $\Gamma$ is uniformly globally asymptotically stable and some mild regularity assumptions hold. We specialize this result to different classes of gain operators, including subadditive homogeneous operators, finite-dimensional operators and sum-type operators, partially recovering known results in the literature.%

In the most general case, it remains still an open problem under which conditions we can construct a path of strict decay with our approach, since the existence of a locally Lipschitz continuous single-valued selection of $r \mapsto \Fix(\Gamma_r)$ is hard to check. In all of our results specialized to particular types of gain operators, the assumptions imply that $\Gamma_r$ only admits one fixed point for each $r \geq 0$, which is the simplest case. In those cases, nevertheless, we think that our approach is more intuitive and constructive than the approach for finite networks based on the KKM lemma, which is a pure existence result.%

For further reading on input-to-state stability of finite and infinite networks, we refer the reader to \cite{KaJ11,MiP20,Sch20} and the references given therein.%

The paper is organized as follows. In Section \ref{sec_prelim}, we introduce the central concepts of the paper and explain in detail the meaning of a path of strict decay for the small-gain approach to input-to-state stability. Section \ref{sec_augmented_op} introduces the augmented gain operator and studies its properties with a special emphasis on the stability properties of the induced discrete-time system. In Section \ref{sec_ugas}, we try to understand uniform global asymptotic stability of the system induced by a gain operator in terms of the behavior under one iterate of the operator. This leads to a certain small-gain-type condition which turns out to be very useful in the construction of a path of strict decay. Finally, Section \ref{sec_path} is devoted to the construction of paths of strict decay and contains the main result of the paper (cf. Theorem \ref{thm_path_of_decay}.)%

\section{Preliminaries}\label{sec_prelim}

In this section, we introduce notation and some fundamental definitions.%

\paragraph{Notation} By $\N = \{1,2,3,\ldots\}$, we denote the set of natural numbers and $\Z_+ := \N \cup \{0\}$. The Banach space of all bounded real sequences $\ell_{\infty}$ is equipped with the norm $\|s\|_{\infty} = \sup_{i\in\N}|s_i|$ for any $s = (s_i)_{i\in\N} \in \ell_{\infty}$. We write $\pi_i:\ell_{\infty} \rightarrow \R$ for the projection to the $i$th component, $\pi_i(s) = s_i$. By%
\begin{equation*}
  \ell_{\infty}^+ := \{ s = (s_i)_{i\in\N} \in \ell_{\infty} : s_i \geq 0,\ \forall i \in \N \},%
\end{equation*}
we denote the standard positive cone in $\ell_{\infty}$. This cone is closed and has a nonempty interior. It induces an order on $\ell_{\infty}$ by $s^1 \leq s^2$ if and only if $s^2 - s^1 \in \ell_{\infty}^+$. Given $s^1,s^2 \in \ell_{\infty}^+$, we write $s = s^1 \oplus s^2$ for the vector with components $s_i = \max\{s^1_i,s^2_i\}$ for all $i \in \N$. We also extend this definition to an arbitrary number of vectors $s^{\alpha}$ ($\alpha\in A$), and write $\bigoplus_{\alpha \in A} s^{\alpha} = (\sup_{\alpha\in A}s_i^{\alpha})_{i\in\N}$. By $e_i$, we denote the $i$th unit vector in $\ell_{\infty}$ and by $\unit$ the vector all of whose components are equal to $1$, i.e.~$\unit = \sum_{i\in\N}e_i$. If $(X,\|\cdot\|_X)$ is a normed space, we write $B_{\ep}(x) = \{ y \in X : \|x - y\|_X < \ep \}$ and $\dist(x,A) = \inf_{y\in A}\|x - y\|_X$ for any $A \subseteq X$ and $x \in X$.%

We further use the following classes of comparison functions:%
\begin{align*}
  \PC &:= \left\{\gamma \in C^0(\R_+,\R_+): \gamma(0) = 0,\ \gamma(r) > 0,\ \forall r > 0 \right\},\\
  \KC &:= \left\{\gamma \in \PC: \ \gamma\mbox{ is strictly increasing}\right\}, \\
  \KC_{\infty} &:= \left\{\gamma\in\KC:\ \gamma\mbox{ is unbounded}\right\}, \\
  \LC &:= \big\{\gamma \in C^0(\R_+,\R_+):\ \gamma\mbox{ is strictly decreasing with } \lim_{t\rightarrow\infty}\gamma(t)=0 \big\}, \\
  \KC\LC &:= \{\beta \in C^0(\R_+^2,\R_+): \beta(\cdot,t)\in\KC,\ \forall t \geq 0,\ \beta(r,\cdot)\in \LC,\ \forall r > 0\}.%
\end{align*}

A function $\mu:\ell_{\infty}^+ \rightarrow [0,\infty]$ is called a \emph{monotone aggregation function (MAF)} if it satisfies the following properties:%
\begin{enumerate}
\item[(i)] $\mu(0) = 0$.%
\item[(ii)] If $s^1 \leq s^2$, then $\mu(s^1) \leq \mu(s^2)$ for any $s^1,s^2 \in \ell_{\infty}^+$.%
\item[(iii)] For each $I \subseteq \N$, define $\ell_{\infty}^+(I) := \{ s \in \ell_{\infty}^+ : s_i = 0, \forall i \in \N \setminus I \}$. Then, $\mu$ is finite-valued and continuous on $\ell_{\infty}^+(I)$ for every finite $I$.%
\end{enumerate}
Sometimes, MAFs are also required to be subadditive (see, e.g., \cite{ruffer2010monotone}). However, for most results in this paper we do not need such an assumption.%

An operator $T:\ell_{\infty}^+ \rightarrow \ell_{\infty}^+$ is called monotone if $s^1 \leq s^2$ implies $T(s^1) \leq T(s^2)$ for all $s^1,s^2 \in \ell_{\infty}^+$. For any monotone operator $T$, we call the discrete-time system%
\begin{equation}\label{eq_general_monotone_sys}
  s(k+1) = T(s(k)),\quad k = 0,1,2,\ldots%
\end{equation}
\begin{itemize}
\item \emph{uniformly globally stable (UGS)} if there exists $\varphi \in \KC_{\infty}$ with%
\begin{equation*}
  \|T^k(s)\|_{\infty} \leq \varphi(\|s\|_{\infty}) \mbox{\quad for all\ } s \in \ell_{\infty}^+,\ k \geq 0.%
\end{equation*}
\item \emph{uniformly globally asymptotically stable (UGAS)} if there exists $\beta \in \KC\LC$ with%
\begin{equation*}
  \|T^k(s)\|_{\infty} \leq \beta(\|s\|_{\infty},k) \mbox{\quad for all\ } s \in \ell_{\infty}^+,\ k \geq 0.%
\end{equation*}
\item \emph{uniformly globally exponentially stable (UGES)} if there exist $M>0$ and $\gamma\in(0,1)$ with%
\begin{equation*}
  \|T^k(s)\|_{\infty} \leq M\gamma^k\|s\|_{\infty} \mbox{\quad for all\ } s \in \ell_{\infty}^+,\ k \geq 0.%
\end{equation*}
\item \emph{globally attractive} if for every $s \in \ell_{\infty}^+$%
\begin{equation*}
  \lim_{k \rightarrow \infty} \|T^k(s)\|_{\infty} = 0.%
\end{equation*}
\item \emph{globally componentwise attractive} if for every $s \in \ell_{\infty}^+$%
\begin{equation*}
  \lim_{k \rightarrow \infty} \pi_i \circ T^k(s) = 0 \mbox{\quad for all\ } i \in \N.%
\end{equation*}
\end{itemize}

\paragraph{Infinite networks} The basis of the study carried out in this paper is the following setup: We have a network of countably many finite-dimensional continuous-time control systems%
\begin{equation*}
  \Sigma_i: \quad \dot{x}_i = f_i(x_i,\bar{x}_i,u_i),\quad i \in \N,%
\end{equation*}
where $x_i \in \R^{n_i}$, $u_i \in \R^{m_i}$, $\bar{x}_i = (x_j)_{j \in I_i}$ and $I_i \subset \N \setminus \{i\}$ is a \emph{finite} index set (the set of \emph{neighbors} of system $\Sigma_i$). We further equip each $\R^{n_i}$ and each $\R^{m_i}$ with a norm. Although these norms can depend on the index $i$, we simply write $|\cdot|$ for each of them.%

Under conditions specified in \cite[Thm.~II.1]{kawan2021lyapunov}, we can aggregate the systems $\Sigma_i$ to obtain an infinite-dimensional system%
\begin{equation*}
  \Sigma: \quad \dot{x} = f(x,u)%
\end{equation*}
which is well-posed on the state space%
\begin{equation*}
  X = \ell_{\infty}(\N,(n_i)) := \bigl\{ x = (x_i)_{i\in\N} : x_i \in \R^{n_i},\ \sup_{i\in\N}|x_i| < \infty\bigr\}%
\end{equation*}
with the input space $U = \ell_{\infty}(\N,(m_i))$ and class of external input functions given by%
\begin{equation*}
  \UC := \{u \in L_{\infty}(\R_+,U) : u \mbox{ is piecewise right-continuous} \},
\end{equation*}
which is equipped with the $L_{\infty}$-norm $\|u\|_{\UC} := \esssup_{t\in\R_+} |u(t)|_U$. That is, for every initial state $x^0 \in X$ and every $u \in \UC$, there exists a unique solution of the ODE $\dot{x}(t) = f(x(t),u(t))$ with initial condition $x(0) = x^0$ in the sense of Carath\'eodory (see \cite{kawan2021lyapunov}), and every bounded maximal solution is defined on $\R_+$.%

We further assume that there exist $\psi_1,\psi_2 \in \KC_{\infty}$ and $\alpha \in \PC$ as well as continuous functions $V_i:\R^{n_i} \rightarrow \R_+$, $i\in\N$, which are $C^1$ outside of $0$, such that for every $i \in \N$ the following properties are satisfied:%
\begin{itemize}
\item For all $x_i \in \R^{n_i}$, we have $\psi_1(|x_i|) \leq V_i(x_i) \leq \psi_2(|x_i|)$.%
\item There exist $\gamma_{ij} \in \KC \cup \{0\}$ for all $j\in\N$, where $\gamma_{ij} = 0$ whenever $j \in \N \setminus I_i$, $\gamma_{iu} \in \KC$, and a MAF $\mu_i:\ell_{\infty}^+ \rightarrow [0,\infty]$ such that for all $x = (x_j)_{j\in\N} \in X$ and $u = (u_j)_{j\in\N} \in U$ the following implication holds:%
\begin{align*}
  V_i(x_i) &> \max\{\mu_i( [\gamma_{ij}(V_j(x_j))]_{j\in\N} ), \gamma_{iu}(|u_i|) \} \\
	         &\qquad \Rightarrow \nabla V_i(x_i) f_i(x_i,\bar{x}_i,u_i) \leq -\alpha(V_i(x_i)).%
\end{align*}
\end{itemize}
The function $V_i$ is called an \emph{ISS Lyapunov function} for the subsystem $\Sigma_i$, the $\gamma_{ij}$ are called \emph{internal gains}, and the $\gamma_{iu}$ \emph{external gains}.%

The internal gains $\gamma_{ij}$ together with the MAFs $\mu_i$ give rise to a \emph{gain operator}, defined by%
\begin{equation*}
  \Gamma(s) := (\mu_i( [\gamma_{ij}(s_j)]_{j\in\N} ))_{i\in\N},\quad \Gamma:\ell_{\infty}^+ \rightarrow \ell_{\infty}^+,%
\end{equation*}
which we always assume to be \emph{well-defined and continuous}.%

\begin{assumption}\label{ass_standing}
The gain operator $\Gamma:\ell_{\infty}^+ \rightarrow \ell_{\infty}^+$ is well-defined and continuous.
\end{assumption}

Observe that $\Gamma(0) = 0$ and that $\Gamma$ is a monotone operator due to property (ii) of MAFs and the fact that each $\gamma_{ij}$ is a monotonically non-decreasing function:%
\begin{equation*}
  s^1 \leq s^2 \quad \Rightarrow \quad \Gamma(s^1) \leq \Gamma(s^2) \mbox{\quad for all\ } s^1,s^2 \in \ell_{\infty}^+.%
\end{equation*}

It is not easy to provide a general characterization of the well-definedness and continuity of gain operators. However, we can provide the following sufficient condition for $\Gamma$ to be well-defined.%

\begin{proposition}\label{prop_gamma_welldefined}
Assume that the family $\{\gamma_{ij} : i,j\in\N\}$ is pointwise equicontinuous and for every $R > 0$, it holds that%
\begin{equation}\label{eq_wd_ass}
  \sup_{i \in \N} \mu_i\Bigl( R\sum_{j\in I_i}e_j \Bigr) < \infty.%
\end{equation}
Then $\Gamma$ is well-defined.
\end{proposition}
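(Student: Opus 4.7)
The goal is to show that for every $s \in \ell_{\infty}^+$, the sequence $\Gamma(s) = (\mu_i([\gamma_{ij}(s_j)]_{j\in\N}))_{i\in\N}$ is bounded, i.e.~$\sup_{i\in\N}\mu_i([\gamma_{ij}(s_j)]_{j\in\N}) < \infty$. My plan is to reduce the argument to the hypothesis \eqref{eq_wd_ass} by showing that the vectors $[\gamma_{ij}(s_j)]_{j\in\N}$ are componentwise dominated by a constant multiple of $\sum_{j\in I_i}e_j$, uniformly in $i$.

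First I would fix $s \in \ell_{\infty}^+$ and set $R := \|s\|_{\infty}$, so $s_j \in [0,R]$ for all $j$. The key preliminary step is to establish the uniform bound
\begin{equation*}
  M_R := \sup_{i,j \in \N}\sup_{r \in [0,R]}\gamma_{ij}(r) < \infty.
\end{equation*}
Since each $\gamma_{ij}$ is monotone non-decreasing, it suffices to bound $\gamma_{ij}(R)$ uniformly in $i,j$. For this, I would exploit pointwise equicontinuity together with the common value $\gamma_{ij}(0) = 0$: for each $r \in [0,R]$ there exists $\delta_r > 0$ so that $|\gamma_{ij}(r')-\gamma_{ij}(r)| < 1$ for all $i,j$ whenever $|r - r'| < \delta_r$. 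By compactness of $[0,R]$, choose a finite cover by such $\delta_{r_k}$-balls and, since $[0,R]$ is connected, extract a finite chain of overlapping balls linking $0$ to $R$. Telescoping the oscillation bounds along this chain gives $\gamma_{ij}(R) \leq N$ for all $i,j$, where $N$ is the number of balls in the chain.

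Once this uniform bound is in hand, the rest is routine. Because $\gamma_{ij} \equiv 0$ for $j \notin I_i$ and $\gamma_{ij}(s_j) \leq M_R$ for $j \in I_i$, I obtain the componentwise estimate
\begin{equation*}
  [\gamma_{ij}(s_j)]_{j\in\N} \leq M_R \sum_{j\in I_i} e_j \quad \text{in } \ell_{\infty}^+.
\end{equation*}
Applying the monotonicity property (ii) of the MAF $\mu_i$ yields $\mu_i([\gamma_{ij}(s_j)]_{j\in\N}) \leq \mu_i\bigl(M_R \sum_{j\in I_i} e_j\bigr)$, and taking the supremum over $i \in \N$, the hypothesis \eqref{eq_wd_ass} (applied with $R$ replaced by $M_R$) gives a finite bound. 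Hence $\Gamma(s) \in \ell_{\infty}^+$.

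The main obstacle is the compactness/chain argument deducing uniform boundedness of $\{\gamma_{ij}\}$ on $[0,R]$ from pointwise equicontinuity plus the common zero $\gamma_{ij}(0)=0$. Everything else is a direct use of monotonicity of the gains and of the MAFs together with the standing assumption. Note that this proof gives only well-definedness, not continuity of $\Gamma$, which would require an additional argument (presumably strengthening pointwise equicontinuity to genuine uniform control, or separate continuity properties of the $\mu_i$).
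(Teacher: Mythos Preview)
Your proof is correct and follows essentially the same approach as the paper: both establish a uniform bound $\gamma_{ij}(s_j) \leq R$ for all $i,j$ from pointwise equicontinuity (the paper defers this step to \cite[Prop.~4.7]{mironchenko2021nonlinear}, while you sketch the compactness/chain argument directly), then use $[\gamma_{ij}(s_j)]_{j\in\N} \leq R\sum_{j\in I_i}e_j$ together with monotonicity of the MAFs and hypothesis~\eqref{eq_wd_ass}. Your closing remark that only well-definedness, not continuity, is obtained is also accurate and matches the scope of the proposition.
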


\begin{proof}
As the proof of \cite[Prop.~4.7]{mironchenko2021nonlinear} shows, the assumption of pointwise equicontinuity implies that for every $s \in \ell_{\infty}^+$ there exists $R > 0$ with $\gamma_{ij}(s_j) \leq R$ for all $i,j \in \N$. Hence, the vector $(\gamma_{ij}(s_j))_{j\in\N}$ can be bounded by the vector $R\sum_{j\in I_i}e_j$. Together with Assumption \eqref{eq_wd_ass} and the monotonicity of the MAFs, this implies $\|\Gamma(s)\|_{\infty} < \infty$. Hence, $\Gamma$ is well-defined.
\end{proof}

We will pay special attention to the following classes of gain operators:%
\begin{itemize}
\item If $\mu_i(s) = \sup_{j\in\N}s_j$ for all $i\in\N$, we call $\Gamma$ a \emph{max-type gain operator}. By \cite[Prop.~4.7]{mironchenko2021nonlinear}, a max-type gain operator is well-defined and continuous if the family $\{ \gamma_{ij} : i,j\in \N \}$ is pointwise equicontinuous. An important property of max-type gain operators is that%
\begin{equation*}
  \Gamma(s^1 \oplus s^2) = \Gamma(s^1) \oplus \Gamma(s^2) \mbox{\quad for all\ } s^1,s^2 \in \ell_{\infty}^+.%
\end{equation*}
Operators with this property are also called \emph{max-preserving}.%
\item If $\mu_i(s) = \sum_{j\in\N}s_j$ for all $i\in\N$, we call $\Gamma$ a \emph{sum-type gain operator}. Sufficient conditions for $\Gamma$ being well-defined and continuous can be found in \cite[Ass.~4.10 and Prop.~4.12]{mironchenko2021nonlinear}.%
\item Assume that all gains are linear functions and each $\mu_i$ is subadditive and homogenenous, i.e.%
\begin{itemize}
\item $\mu_i(rs) = r\mu_i(s)$ for all $r \geq 0$ and $s \in \ell_{\infty}^+$.%
\item $\mu_i(s^1 + s^2) \leq \mu_i(s^1) + \mu_i(s^2)$ for all $s^1,s^2 \in \ell_{\infty}^+$.%
\end{itemize}
Then also $\Gamma$ is subadditive and homogeneous, where the subadditivity of $\Gamma$ has to be understood with respect to the order induced by $\ell_{\infty}^+$, i.e.~$\Gamma(s^1 + s^2) \leq \Gamma(s^1) + \Gamma(s^2)$ for all $s^1,s^2 \in \ell_{\infty}^+$. We then call $\Gamma$ a \emph{subadditive and homogeneous gain operator}. By \cite{mironchenko2021iss}, $\Gamma$ is well-defined and continuous if and only if $\sup_{i\in\N}\mu_i((\gamma_{ij})_{j\in\N}) < \infty$.%
\item If $\Gamma$ is a sum-type operator and all $\gamma_{ij}$ are linear, then $\Gamma$ is the restriction of a linear operator on $\ell_{\infty}$. In this case, we call $\Gamma$ a \emph{linear gain operator}.%
\end{itemize}

We are interested in decay properties of gain operators. The following definitions are central to our investigations.%

\begin{definition}
A point $s \in \ell_{\infty}^+ \setminus \{0\}$ is called a \emph{point of decay} for $\Gamma$ if $\Gamma(s) \leq s$.
\end{definition}

\begin{definition}\label{def_path_of_decay}
A mapping $\sigma:\R_+ \rightarrow \ell_{\infty}^+$ is called a \emph{path of strict decay (for $\Gamma$)}, if all the following properties hold:%
\begin{enumerate}
\item[(P1)] There exists a function $\rho \in \KC_{\infty}$ such that%
\begin{equation*}
  \Gamma(\sigma(r)) \leq (\id + \rho)^{-1} \circ \sigma(r) \mbox{\quad for all\ } r \geq 0,%
\end{equation*}
where $(\id + \rho)^{-1}$ is applied componentwise.%
\item[(P2)] There exist $\sigma_{\min},\sigma_{\max} \in \KC_{\infty}$ satisfying%
\begin{equation*}
  \sigma_{\min} \leq \sigma_i \leq \sigma_{\max} \mbox{\quad for all\ } i \in \N.%
\end{equation*}
\item[(P3)] Each component function $\sigma_i = \pi_i \circ \sigma$, $i\in\N$, is a $\KC_{\infty}$-function.%
\item[(P4)] For every compact interval $J \subset (0,\infty)$, there exist $0 < c \leq C < \infty$ such that for all $r_1,r_2 \in J$ and $i \in \N$%
\begin{equation}\label{eq_local_bilip}
  c|r_1 - r_2| \leq |\sigma_i^{-1}(r_1) - \sigma_i^{-1}(r_2)| \leq C|r_1 - r_2|.%
\end{equation}
\end{enumerate}
If all of the above properties are satisfied with the exception that $\rho = 0$ in (i), we call $\sigma$ a \emph{path of decay (for $\Gamma$)}.
\end{definition}

\begin{remark}\label{rem_path_of_decay}
It is important to note that any path of decay for the scaled gain operator $\Gamma_{\rho} := (\id + \rho) \circ \Gamma$ is a path of strict decay for $\Gamma$. Since the scaled gain operator $\Gamma_{\rho}$ can be regarded as an unscaled gain operator with MAFs $(\id + \rho) \circ \mu_i$ instead of $\mu_i$, it thus suffices to find conditions for the existence of a path of decay.
\end{remark}

\begin{remark}\label{rem_pod2}
According to the proof of \cite[Thm.~VI.1]{kawan2021lyapunov}, the function $\sigma_i^{-1}$ in condition \eqref{eq_local_bilip} can be replaced by $\sigma_i$. That is, the uniform local Lipschitz condition for the functions $\sigma_i^{-1}$ is equivalent to the same condition for the functions $\sigma_i$.
\end{remark}

\paragraph{Small-gain theorem} The importance of the concept of a path of strict decay becomes clear through the small-gain theorem for input-to-state stability presented below. First, we recall the definitions of ISS and ISS Lyapunov functions.%

\begin{definition}\label{def_ISS}
The network $\Sigma$ is called \emph{input-to-state stable (ISS)} if it is forward complete (i.e.~every maximal solution is defined on $\R_+$) and there exist $\beta \in \KC\LC$ and $\gamma \in \KC_{\infty}$ such that%
\begin {equation*}
  \|\phi(t,x,u)\|_X \leq \beta(\|x\|_X,t) + \gamma(\|u\|_{\UC})%
\end{equation*}
for all $(t,x,u) \in \R_+ \tm X \tm \UC$.%
\end{definition}

\begin{definition}
A function $V:X \rightarrow \R_+$ is called an \emph{ISS Lyapunov function (in an implication form)} for $\Sigma$ if it satisfies the following properties:%
\begin{enumerate}
\item[(i)] $V$ is continuous.%
\item[(ii)] There exist $\psi_1,\psi_2 \in \KC_{\infty}$ such that%
\begin{equation}\label{eq_isslf_coercivity}
  \psi_1(\|x\|_X) \leq V(x) \leq \psi_2(\|x\|_X) \mbox{\quad for all\ } x \in X.%
\end{equation}
\item[(iii)] There exist $\gamma\in\KC$ and $\alpha\in\PC$ such that for all $x\in X$ and $u\in\UC$ the following implication holds:%
\begin{equation}\label{eq_Lyap_impl}
  V(x) > \gamma(\|u\|_{\UC}) \quad \Rightarrow \quad \rmD^+ V_u(x) \leq -\alpha(V(x)),%
\end{equation}
where $\rmD^+ V_u(x)$ denotes the right upper Dini orbital derivative defined as%
\begin{equation*}
  \rmD^+ V_u(x) := \limsup_{t \rightarrow 0^+} \frac{V(\phi(t,x,u)) - V(x)}{t}.%
\end{equation*}
\end{enumerate}
\end{definition}

It is well-known that the existence of an ISS Lyapunov function implies ISS.%

\begin{theorem}\label{thm_smallgain_result}
Consider the network $\Sigma$ composed of the subsystems $\Sigma_i$, $i \in \N$, and let $\{ V_i : i \in \N \}$ be a family of associated ISS Lyapunov functions with associated MAFs $\mu_i$, internal gains $\gamma_{ij}$, and external gains $\gamma_{iu}$. Additionally, let the following assumptions hold:%
\begin{enumerate}
\item[(i)] The system $\Sigma$ is well-posed.%
\item[(ii)] The gain operator $\Gamma:\ell_{\infty}^+ \rightarrow \ell_{\infty}^+$ is well-defined and continuous and there exists $\gamma^u_{\max} \in \KC$ with $\gamma_{iu} \leq \gamma^u_{\max}$ for all $i \in \N$.%
\item[(iii)] There exists a path of strict decay for $\Gamma$.%
\item[(iv)] For each $R > 0$, there exists a constant $L > 0$ with%
\begin{equation*}
  |V_i(x_i) - V_i(y_i)| \leq L|x_i - y_i| \mbox{\quad for all\ } x_i,y_i \in B_R(0),\ i \in \N.%
\end{equation*}
\end{enumerate}
Then $\Sigma$ is ISS and the following function is an ISS Lyapunov function for $\Sigma$:%
\begin{equation*}
  V(x) = \sup_{i\in\N}\sigma_i^{-1}(V_i(x_i)) \mbox{\quad for all\ } x \in X.%
\end{equation*}
Moreover, $V$ is locally Lipschitz continuous on $X \setminus \{0\}$.
\end{theorem}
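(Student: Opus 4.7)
The plan is to verify that $V$ is an ISS Lyapunov function in implication form, since the excerpt notes that this is enough to conclude ISS. The three tasks are coercivity, local Lipschitz continuity on $X \setminus \{0\}$, and the Dini-derivative implication. Coercivity is quick: (P2) gives $\sigma_{\max}^{-1} \leq \sigma_i^{-1} \leq \sigma_{\min}^{-1}$, and combining this with $\psi_1(|x_i|) \leq V_i(x_i) \leq \psi_2(|x_i|)$ and taking the supremum over $i$ produces
\begin{equation*}
  \sigma_{\max}^{-1} \circ \psi_1(\|x\|_X) \leq V(x) \leq \sigma_{\min}^{-1} \circ \psi_2(\|x\|_X),
\end{equation*}
which is \eqref{eq_isslf_coercivity} with new $\KC_{\infty}$-class bounds.

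For local Lipschitz regularity, I would fix $x \neq 0$ with $r_0 := V(x) > 0$ and take $y$ in a small neighborhood of $x$ where $V(y) \in [r_0/2, 2r_0]$. Via $|\sup_i a_i - \sup_i b_i| \leq \sup_i |a_i - b_i|$, the task becomes to bound $|\sigma_i^{-1}(V_i(y_i)) - \sigma_i^{-1}(V_i(x_i))|$ uniformly in $i$. Only indices at which one of the two values exceeds $r_0/3$ (call them near-active) matter for the suprema, and at those indices (P2) confines $V_i(x_i)$ and $V_i(y_i)$ to a compact subinterval of $(0,\infty)$ independent of $i$. On that interval, (P4) supplies a uniform Lipschitz constant for $\sigma_i^{-1}$, which combined with hypothesis (iv) produces the required bound.

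For the Lyapunov implication I would choose $\gamma := \sigma_{\min}^{-1} \circ (\id + \rho) \circ \gamma^u_{\max} \in \KC_{\infty}$, so that $V(x) > \gamma(\|u\|_{\UC})$ forces $\sigma_i(V(x)) > (\id + \rho)(\gamma_{iu}(|u_i|))$ uniformly in $i$. Setting $r := V(x)$, monotonicity of the MAFs together with $V_j(x_j) \leq \sigma_j(r)$ and (P1) yields
\begin{equation*}
  \mu_i([\gamma_{ij}(V_j(x_j))]_{j\in\N}) \leq \Gamma(\sigma(r))_i \leq (\id + \rho)^{-1}(\sigma_i(r))
\end{equation*}
for every $i$. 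At any near-active $i$ with $\sigma_i^{-1}(V_i(x_i))$ close enough to $r$---the closeness quantified uniformly in $i$ via (P4) and the $\KC_{\infty}$ property of $\rho$---one obtains $V_i(x_i) > (\id + \rho)^{-1}(\sigma_i(r))$, which simultaneously beats the MAF bound and, by the choice of $\gamma$, the external gain $\gamma_{iu}(|u_i|)$. The subsystem ISS Lyapunov implication then gives $\nabla V_i(x_i) f_i(x_i, \bar{x}_i, u_i) \leq -\alpha(V_i(x_i))$. Passing to a sequence of such near-active indices and converting decay of $V_i$ into decay of $\sigma_i^{-1} \circ V_i$ by means of the bi-Lipschitz constants from (P4) finally yields $\rmD^+ V_u(x) \leq -\alpha^\star(V(x))$ for some $\alpha^\star \in \PC$ manufactured from $\alpha$, $\rho$, and the local bi-Lipschitz constants.

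The principal obstacle is the uniform-in-$i$ character of every estimate above: the supremum defining $V$ is over an infinite index set and need not be attained, so one must work along approximating sequences of near-active indices, and every pointwise subsystem bound has to be upgraded to an $i$-independent one. Property (P4) is precisely the ingredient that enables this upgrade, and it is the feature of the \emph{path of strict decay} concept that makes the infinite-network argument go through at all; in the finite-network setting local compactness renders this step routine, but here it is the crux.
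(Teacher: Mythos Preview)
Your proposal is correct and follows essentially the same route as the paper, which simply defers to \cite[Thm.~III.1]{kawan2021lyapunov} with the remark that only Step 4 (the decay implication) needs a minor modification. The modification is precisely what you identify: replacing the max-specific estimate by the MAF-monotonicity bound $\mu_i([\gamma_{ij}(V_j(x_j))]_{j\in\N}) \leq \Gamma_i(\sigma(r)) \leq (\id+\rho)^{-1}(\sigma_i(r))$, and your emphasis on the role of (P4) for the uniform-in-$i$ estimates along near-active index sequences is exactly the point of that referenced proof.
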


The proof of the theorem is almost identical to the one given in \cite[Thm.~III.1]{kawan2021lyapunov} for the case of max-type gain operators. The simple modifications necessary in Step 4 of the proof are left to the reader.%

The rest of the paper is devoted to the construction of a path of decay for the gain operator $\Gamma$. (Recall that by Remark \ref{rem_path_of_decay}, a path of strict decay for $\Gamma$ is obtained from a path of decay for the enlarged gain operator $\Gamma_{\rho}$.)%

\section{The augmented gain operator}\label{sec_augmented_op}

In this section, we introduce and study an operator, derived from the gain operator $\Gamma$, which will be our main tool for the construction of a path of decay for $\Gamma$.%

\subsection{Definition and elementary properties}

Given a well-defined and continuous gain operator $\Gamma$, we introduce the \emph{augmented gain operator}%
\begin{equation}\label{eq_def_hatgamma}
  \hat{\Gamma}(s) := s \oplus \Gamma(s),\quad \hat{\Gamma}:\ell_{\infty}^+ \rightarrow \ell_{\infty}^+.%
\end{equation}
Some elementary properties of $\hat{\Gamma}$ are provided in the next proposition.%

\begin{proposition}\label{prop_elem}
The operator $\hat{\Gamma}$ has the following properties:%
\begin{enumerate}
\item[(i)] $\hat{\Gamma}$ is well-defined and continuous.%
\item[(ii)] $\hat{\Gamma}(0) = 0$ and $\hat{\Gamma}$ is a monotone operator.%
\item[(iii)] $\hat{\Gamma}(s) \geq s$ for all $s \in \ell_{\infty}^+$.%
\item[(iv)] $\hat{\Gamma}(s) = s$ if and only if $\Gamma(s) \leq s$. That is, the fixed points of $\hat{\Gamma}$ are precisely the points of decay for $\Gamma$.%
\item[(v)] $\bigoplus_{i=0}^n \Gamma^i(s) \leq \hat{\Gamma}^n(s)$ for all $n \geq 0$ and $s \in \ell_{\infty}^+$.%
\item[(vi)] $\hat{\Gamma}^n(s) = s \oplus \Gamma(\hat{\Gamma}^{n-1}(s))$ for all $n \geq 1$ and $s \in \ell_{\infty}^+$.%
\end{enumerate}
\end{proposition}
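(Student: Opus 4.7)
The plan is to verify each of the six items essentially in the order listed, with the harder combinatorial items (v) and (vi) reserved for the end where they can be proved by induction using the easier items.

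For (i), I would observe that the componentwise maximum operation $(a,b) \mapsto a \oplus b$ is 1-Lipschitz on $\ell_\infty \times \ell_\infty$ in the sense that $|\max(a_i,b_i) - \max(a'_i,b'_i)| \leq \max(|a_i - a'_i|, |b_i - b'_i|)$ for each component $i$, so it is jointly continuous from $\ell_\infty \times \ell_\infty$ to $\ell_\infty$. Composing with the continuous map $s \mapsto (s,\Gamma(s))$, which is well-defined by Assumption~\ref{ass_standing}, yields well-definedness and continuity of $\hat\Gamma$. Item (ii) is immediate: $\hat\Gamma(0) = 0 \oplus \Gamma(0) = 0$, and monotonicity follows because $s^1 \leq s^2$ implies $\Gamma(s^1) \leq \Gamma(s^2)$ (noted just before the proposition), so $s^1 \oplus \Gamma(s^1) \leq s^2 \oplus \Gamma(s^2)$. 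Item (iii) follows directly from the definition of $\oplus$. Item (iv) reduces to the observation that $s \oplus \Gamma(s) = s$ in the order on $\ell_\infty$ is equivalent to $\Gamma(s) \leq s$ componentwise.

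For (vi), I would use induction on $n$. The base case $n=1$ is the definition. For the inductive step, I would expand
\begin{equation*}
  \hat\Gamma^{n+1}(s) = \hat\Gamma(\hat\Gamma^n(s)) = \hat\Gamma^n(s) \oplus \Gamma(\hat\Gamma^n(s)),
\end{equation*}
and then show this equals $s \oplus \Gamma(\hat\Gamma^n(s))$. One direction is clear from (iii) since $\hat\Gamma^n(s) \geq s$. For the other direction, use the induction hypothesis $\hat\Gamma^n(s) = s \oplus \Gamma(\hat\Gamma^{n-1}(s))$ together with the fact that $\hat\Gamma^{n-1}(s) \leq \hat\Gamma^n(s)$ (iterated application of (iii)) and the monotonicity of $\Gamma$ to conclude $\hat\Gamma^n(s) \leq s \oplus \Gamma(\hat\Gamma^n(s))$.

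For (v), I would again proceed by induction. The case $n = 0$ gives equality $s = s$. For the step, using (vi) and the induction hypothesis together with monotonicity of $\Gamma$,
\begin{equation*}
  \bigoplus_{i=0}^{n+1} \Gamma^i(s) = s \oplus \Gamma\Bigl(\bigoplus_{i=0}^{n}\Gamma^i(s)\Bigr) \leq s \oplus \Gamma(\hat\Gamma^n(s)) = \hat\Gamma^{n+1}(s),
\end{equation*}
where the intermediate equality uses that $\Gamma$ applied to a supremum dominates the supremum of the images (by monotonicity, for each $i$ one has $\Gamma^{i+1}(s) \leq \Gamma(\bigoplus_j \Gamma^j(s))$, so the sup over $i$ is also dominated); this suffices to get the stated inequality.

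The only step requiring a moment's thought is the inductive proof of (vi), where one must observe that the ``new'' base term $s$ can replace the ``old'' iterate $\hat\Gamma^n(s)$ in the maximum with $\Gamma(\hat\Gamma^n(s))$; this is the main subtlety, and it is handled precisely by unrolling the induction hypothesis one step and exploiting monotonicity. Everything else is essentially bookkeeping.
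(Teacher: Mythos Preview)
Your argument is correct and matches the paper's proof almost verbatim: items (i)--(iv) are dismissed as immediate, and (v), (vi) are handled by induction using monotonicity together with the observation $\hat\Gamma^{n}(s) \geq \hat\Gamma^{n-1}(s)$. The only differences are cosmetic: you prove (vi) first and then invoke it for (v), whereas the paper proves (v) directly; and in your displayed chain for (v) the first ``$=$'' should really be ``$\leq$'' (as your own parenthetical remark acknowledges), since $\Gamma$ is not assumed max-preserving.
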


\begin{proof}
(i)--(iv) are obvious or follow easily from the construction.%

(v) We prove the inequality by induction. For $n=0$, it is trivial. Assuming that $\bigoplus_{i=0}^n \Gamma^i(s) \leq \hat{\Gamma}^n(s)$ for a fixed $n$, we obtain%
\begin{align*}
  \hat{\Gamma}^{n+1}(s) &= \hat{\Gamma}^n(s) \oplus \Gamma(\hat{\Gamma}^n(s)) \geq \bigoplus_{i=0}^n \Gamma^i(s) \oplus \Gamma\Bigl(\bigoplus_{i=0}^n \Gamma^i(s)\Bigr) \\
	&\geq \bigoplus_{i=0}^n \Gamma^i(s) \oplus \Gamma(\Gamma^n(s)) = \bigoplus_{i=0}^{n+1} \Gamma^i(s).%
\end{align*}

(vi) We prove the formula by induction. For $n = 1$, it holds by the definition of $\hat{\Gamma}$. Assuming that it holds for a fixed $n$, we obtain%
\begin{align*}
  \hat{\Gamma}^{n+1}(s) &= \hat{\Gamma}(\hat{\Gamma}^n(s)) = \hat{\Gamma}^n(s) \oplus \Gamma(\hat{\Gamma}^n(s)) = s \oplus \Gamma(\hat{\Gamma}^{n-1}(s)) \oplus \Gamma(\hat{\Gamma}^n(s)).%
\end{align*}
Since $\hat{\Gamma}(s) \geq s$ and $\hat{\Gamma}$ is a monotone operator, it follows that $\hat{\Gamma}^n(s) \geq \hat{\Gamma}^{n-1}(s)$. By monotonicity of $\Gamma$, we obtain $\Gamma(\hat{\Gamma}^n(s)) \geq \Gamma(\hat{\Gamma}^{n-1}(s))$ and this implies $\hat{\Gamma}^{n+1}(s) = s \oplus \Gamma(\hat{\Gamma}^n(s))$.
\end{proof}

Another important property shared by $\Gamma$ and $\hat{\Gamma}$ is described in the next lemma.%

\begin{lemma}\label{lem_weak_continuity}
If $s^n \rightarrow s$ componentwise for a sequence $(s^n)_{n\in\N}$ in $\ell_{\infty}^+$, then $\Gamma(s^n) \rightarrow \Gamma(s)$ and $\hat{\Gamma}(s^n) \rightarrow \hat{\Gamma}(s)$ componentwise. In particular, if a trajectory of $\Gamma$ (or $\hat{\Gamma}$) converges componentwise to $s^* \in \ell_{\infty}^+$, then $s^*$ is a fixed point of $\Gamma$ (or $\hat{\Gamma}$).
\end{lemma}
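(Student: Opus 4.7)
The plan is to reduce the componentwise continuity of $\Gamma$ to the continuity hypothesis built into property (iii) of MAFs, exploiting crucially that each neighbor set $I_i$ is finite so that for every fixed index $i$ the inner vector $[\gamma_{ij}(s_j)]_{j\in\N}$ lies in the finite-dimensional cone $\ell_{\infty}^+(I_i)$. The $\hat{\Gamma}$ statement should then come essentially for free from the $\Gamma$ statement, and the fixed-point claim follows by a routine limiting argument.

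First I fix an arbitrary $i \in \N$ and look at the component $\pi_i \circ \Gamma(s^n) = \mu_i(v^n)$, where $v^n := [\gamma_{ij}(s^n_j)]_{j\in\N}$. Because $\gamma_{ij} = 0$ whenever $j \notin I_i$, the vector $v^n$ belongs to $\ell_{\infty}^+(I_i)$, and similarly for $v := [\gamma_{ij}(s_j)]_{j\in\N}$. Since $I_i$ is finite and each $\gamma_{ij}$ is continuous on $\R_+$, componentwise convergence $s^n_j \to s_j$ for $j\in I_i$ gives $\gamma_{ij}(s^n_j) \to \gamma_{ij}(s_j)$ for these finitely many indices, which on $\ell_{\infty}^+(I_i)$ is equivalent to norm convergence $\|v^n - v\|_{\infty} \to 0$. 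By MAF property (iii), $\mu_i$ is continuous on $\ell_{\infty}^+(I_i)$, so $\mu_i(v^n) \to \mu_i(v)$, i.e.~$\pi_i \circ \Gamma(s^n) \to \pi_i \circ \Gamma(s)$. Since $i$ was arbitrary, $\Gamma(s^n) \to \Gamma(s)$ componentwise.

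For $\hat{\Gamma}$, use the definition $\hat{\Gamma}(s) = s \oplus \Gamma(s)$: in each fixed coordinate $i$, $\pi_i \hat{\Gamma}(s^n) = \max\{s^n_i, \pi_i\Gamma(s^n)\}$, and since both $s^n_i \to s_i$ and $\pi_i\Gamma(s^n)\to\pi_i\Gamma(s)$ while the binary max on $\R$ is continuous, the result follows. For the closing ``in particular'' assertion, let $T \in \{\Gamma,\hat{\Gamma}\}$ and suppose $T^n(s^0) \to s^*$ componentwise. The componentwise continuity just proved yields $T(T^n(s^0)) \to T(s^*)$ componentwise; but $T(T^n(s^0)) = T^{n+1}(s^0) \to s^*$ componentwise as a tail of the original sequence, and uniqueness of componentwise limits forces $T(s^*) = s^*$.

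The only mildly subtle point is making sure we never need norm boundedness of $(s^n)$ or of $(v^n)$ as vectors in the whole of $\ell_{\infty}^+$; the finiteness of $I_i$ for each fixed $i$ is exactly what lets us localize everything into the finite-dimensional slice $\ell_{\infty}^+(I_i)$ where MAF continuity is available. No other tool beyond the definition of an MAF, the continuity of each $\gamma_{ij}$, and the finiteness of each $I_i$ should be required.
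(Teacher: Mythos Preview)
Your proof is correct and follows essentially the same route as the paper: both arguments fix $i$, use $\gamma_{ij}=0$ for $j\notin I_i$ to place the inner vectors in the finite-dimensional slice $\ell_{\infty}^+(I_i)$, invoke continuity of the finitely many $\gamma_{ij}$ and MAF property~(iii), and then deduce the fixed-point claim via the tail-sequence argument. Your write-up is in fact slightly more explicit than the paper's, which omits the $\hat{\Gamma}$ step as ``very similar.''
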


\begin{proof}
Assume that $s^n \rightarrow s$ componentwise and fix $i \in \N$. Then%
\begin{align*}
  \lim_{n \rightarrow \infty} \Gamma_i(s^n) = \lim_{n \rightarrow \infty}\mu_i( [\gamma_{ij}(s^n_j)]_{j\in\N} ).%
\end{align*}
Observe that $(\gamma_{ij}(s^n_j))_{j\in\N} \in \ell_{\infty}^+(I_i)$ for all $n$. By property (iii) of MAFs, one obtains%
\begin{align*}
  \lim_{n \rightarrow \infty} \Gamma_i(s^n) &= \mu_i( \lim_{n \rightarrow \infty}[\gamma_{ij}(s^n_j)]_{j\in\N} ) \\
	&= \mu_i( [\gamma_{ij}(\lim_{n \rightarrow \infty} s^n_j)]_{j\in\N} ) = \mu_i( [\gamma_{ij}(s_j)]_{j\in\N} ) = \Gamma_i(s).%
\end{align*}
The proof for $\hat{\Gamma}$ is very similar and hence is omitted. As a consequence, if a trajectory $\Gamma^n(s)$ converges componentwise to $s^*$, then $\Gamma(\Gamma^n(s)) = \Gamma^{n+1}(s)$ converges componentwise to $\Gamma(s^*)$. This immediately implies $\Gamma(s^*) = s^*$ (analogously for $\hat{\Gamma}$ in place of $\Gamma$).
\end{proof}

\begin{remark}
We note that the property of the operators $\Gamma$ and $\hat{\Gamma}$ described in the above lemma is related to continuity with respect to the weak$^*$-topology of $\ell_{\infty}$ (as the dual of $\ell_1$), because a sequence converges in this topology if and only if it is norm-bounded and converges componentwise. However, since we will not explicitly use the weak$^*$-topology, we do not turn this into a formal statement.
\end{remark}

Our approach to the construction of a path of decay is based on stability properties of the two discrete-time systems%
\begin{equation}\label{eq_gainop_sys}
  s(k+1) = \Gamma(s(k)),\quad s(0) \in \ell_{\infty}^+%
\end{equation}
and%
\begin{equation}\label{eq_auggainop_sys}
  s(k+1) = \hat{\Gamma}(s(k)),\quad s(0) \in \ell_{\infty}^+.%
\end{equation}

As an easy corollary from Proposition \ref{prop_elem}(v), we obtain the following result.%

\begin{corollary}\label{cor_ugs}
If the system \eqref{eq_auggainop_sys} induced by $\hat{\Gamma}$ is UGS, then so is the system \eqref{eq_gainop_sys} induced by $\Gamma$.
\end{corollary}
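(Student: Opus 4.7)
The plan is to chain together Proposition~\ref{prop_elem}(v) with the monotonicity of the supremum norm on the positive cone $\ell_\infty^+$. No further tools are needed, since UGS assumes the bound $\|\hat\Gamma^k(s)\|_\infty \leq \varphi(\|s\|_\infty)$ with some $\varphi \in \KC_\infty$, and the task is simply to transfer this bound to iterates of $\Gamma$.

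First, I would invoke Proposition~\ref{prop_elem}(v), which yields $\Gamma^k(s) \leq \bigoplus_{i=0}^k \Gamma^i(s) \leq \hat\Gamma^k(s)$ for every $s \in \ell_\infty^+$ and every $k \geq 0$. Both inequalities hold componentwise, with all vectors lying in $\ell_\infty^+$. Next I would remark that for any $a,b \in \ell_\infty^+$ with $a \leq b$ one has $\|a\|_\infty \leq \|b\|_\infty$, since the componentwise inequality $0 \leq a_i \leq b_i$ gives $|a_i| \leq |b_i|$ and taking the supremum over $i$ preserves this.

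Combining the two observations yields $\|\Gamma^k(s)\|_\infty \leq \|\hat\Gamma^k(s)\|_\infty \leq \varphi(\|s\|_\infty)$ for all $s \in \ell_\infty^+$ and $k \geq 0$, which is exactly UGS for the system \eqref{eq_gainop_sys} with the same comparison function $\varphi \in \KC_\infty$.

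There is no substantive obstacle here; the only step worth being careful about is noting that the $\ell_\infty$-norm is monotone on the positive cone, which is immediate from the definition. The corollary is essentially a direct consequence of Proposition~\ref{prop_elem}(v).
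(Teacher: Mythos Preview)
Your proof is correct and follows exactly the approach indicated in the paper, which derives the corollary directly from Proposition~\ref{prop_elem}(v). The only thing you add is the explicit remark that the $\ell_\infty$-norm is monotone on $\ell_\infty^+$, which the paper leaves implicit.
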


The assumption that the system induced by $\hat{\Gamma}$ is UGS immediately implies the existence of plenty of points of decay for $\Gamma$, as shown in the next proposition.%

\begin{proposition}\label{prop_pointsofdecay}
Assume that the system \eqref{eq_auggainop_sys} induced by $\hat{\Gamma}$ is UGS, i.e.~there exists $\varphi \in \KC_{\infty}$ such that%
\begin{equation*}
  \|\hat{\Gamma}^k(s)\|_{\infty} \leq \varphi(\|s\|_{\infty}) \mbox{\quad for all\ } s \in \ell_{\infty}^+,\ k \in \Z_+.%
\end{equation*}
Then, for each $s \in \ell_{\infty}^+$, there is $s^* \in \ell_{\infty}^+$ such that $\Gamma(s^*) \leq s^*$ and $s \leq s^* \leq \varphi(\|s\|_{\infty}) \unit$.
\end{proposition}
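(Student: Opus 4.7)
The plan is to exhibit $s^*$ explicitly as the componentwise limit of the $\hat{\Gamma}$-trajectory starting at $s$, and then apply Proposition \ref{prop_elem}(iv) together with Lemma \ref{lem_weak_continuity} to conclude.

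First I would observe that by Proposition \ref{prop_elem}(iii), $\hat{\Gamma}(t) \geq t$ for all $t \in \ell_{\infty}^+$, and combined with the monotonicity of $\hat{\Gamma}$ (Proposition \ref{prop_elem}(ii)), the sequence $s^k := \hat{\Gamma}^k(s)$ is monotonically non-decreasing in the order of $\ell_{\infty}^+$. In particular, for each fixed $i \in \N$, the real sequence $\pi_i(s^k)$ is non-decreasing. Next I would use the UGS assumption to bound this sequence from above: $\pi_i(s^k) \leq \|\hat{\Gamma}^k(s)\|_{\infty} \leq \varphi(\|s\|_{\infty})$ for all $k$ and $i$. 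Hence each coordinate sequence converges to a finite limit, which I would call $s^*_i$, and the resulting $s^* = (s^*_i)_{i\in\N}$ belongs to $\ell_{\infty}^+$ with $s \leq s^* \leq \varphi(\|s\|_{\infty}) \unit$ (the left inequality uses $s^0 = s \leq s^k$). Equivalently, $s^* = \bigoplus_{k=0}^\infty \hat{\Gamma}^k(s)$.

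Then I would invoke Lemma \ref{lem_weak_continuity}: since $s^k \to s^*$ componentwise, and $s^{k+1} = \hat{\Gamma}(s^k)$ converges componentwise to $\hat{\Gamma}(s^*)$, passing to the limit yields $s^* = \hat{\Gamma}(s^*)$. By Proposition \ref{prop_elem}(iv), fixed points of $\hat{\Gamma}$ are exactly the points satisfying $\Gamma(s^*) \leq s^*$, which gives the desired decay property. The two bounds $s \leq s^* \leq \varphi(\|s\|_{\infty})\unit$ are already in hand from the monotone convergence step.

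There is essentially no obstacle here: the monotonicity of the trajectory supplies existence of the componentwise limit, UGS supplies the uniform upper bound ensuring $s^* \in \ell_{\infty}^+$ and furnishes the explicit bound $\varphi(\|s\|_{\infty})\unit$, and the componentwise continuity established in Lemma \ref{lem_weak_continuity} (which substitutes for the lack of norm-continuity in the weak$^*$ sense) turns the limit into a fixed point. The only subtle point worth mentioning is that ordinary norm-continuity of $\hat{\Gamma}$ would not suffice, because $s^k \to s^*$ need not hold in $\|\cdot\|_{\infty}$; this is precisely why Lemma \ref{lem_weak_continuity} is the right tool.
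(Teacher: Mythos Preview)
Your proposal is correct and follows essentially the same argument as the paper's proof: both exploit the monotonicity of the $\hat{\Gamma}$-trajectory together with the UGS bound to obtain componentwise convergence, then invoke Lemma~\ref{lem_weak_continuity} and Proposition~\ref{prop_elem}(iv) to identify the limit as a point of decay satisfying $s \leq s^* \leq \varphi(\|s\|_{\infty})\unit$.
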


\begin{proof}
Observe that statements (ii) and (iii) in Proposition \ref{prop_elem} together imply%
\begin{equation*}
  \hat{\Gamma}^{k+1}(s) \geq \hat{\Gamma}^k(s) \mbox{\quad for all\ } s \in \ell_{\infty}^+,\ k \in \Z_+.%
\end{equation*}
Since $\|\hat{\Gamma}^k(s)\|_{\infty} \leq \varphi(\|s\|_{\infty})$ for all $k \in \Z_+$ by assumption, each component sequence $(\hat{\Gamma}^k_i(s))_{k\in\Z_+}$ converges to a number $s^*_i \leq \varphi(\|s\|_{\infty})$ as $k \rightarrow \infty$ and the vector $s^* := (s^*_i)_{i\in\N}$ is an element of $\ell_{\infty}^+$. From Lemma \ref{lem_weak_continuity}, it immediately follows that $s^*$ is a fixed point of $\hat{\Gamma}$, and thus by Proposition \ref{prop_elem}(iv), a point of decay for $\Gamma$. Since $s \leq s^* \leq \varphi(\|s\|_{\infty})\unit$ by construction, the proof is complete.
\end{proof}

Under the assumption that $\hat{\Gamma}$ induces a UGS system, we introduce the operator%
\begin{equation*}
  \hat{Q}(s) := \bigoplus_{k=0}^{\infty}\hat{\Gamma}^k(s),\quad \hat{Q}:\ell_{\infty}^+ \rightarrow \ell_{\infty}^+,%
\end{equation*}
and observe that in Proposition \ref{prop_pointsofdecay} we have $s^* = \hat{Q}(s)$.%

\subsection{Uniform global stability for particular types of gain operators}

In this subsection, we recover some known results for particular types of gain operators by showing that $\hat{\Gamma}$ induces a UGS system under assumptions previously used to construct points of decay by other methods.%

We start with max-type gain operators.%

\begin{proposition}\label{prop_maxop1}
Let $\Gamma$ be a max-type gain operator. Then the system \eqref{eq_auggainop_sys} induced by $\hat{\Gamma}$ is UGS if and only if the system \eqref{eq_gainop_sys} induced by $\Gamma$ is UGS. In this case,%
\begin{equation}\label{eq_maxtype_hatq}
  \hat{Q}(s) = \bigoplus_{k=0}^{\infty}\Gamma^k(s) \mbox{\quad for all\ } s \in \ell_{\infty}^+.%
\end{equation}
\end{proposition}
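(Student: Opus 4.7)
The plan is to reduce the statement about $\hat{\Gamma}$ to a statement about $\Gamma$ by exploiting the max-preservation property of max-type operators, namely $\Gamma(s^1 \oplus s^2) = \Gamma(s^1) \oplus \Gamma(s^2)$. One direction of the equivalence is already covered by Corollary \ref{cor_ugs}: UGS of $\hat\Gamma$ implies UGS of $\Gamma$ (and this direction does not require max-preservation). So the work is in the converse direction.

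The key observation I would establish first is the sharpened form of Proposition \ref{prop_elem}(v) in the max-preserving case:
\begin{equation*}
  \hat{\Gamma}^n(s) = \bigoplus_{i=0}^{n} \Gamma^i(s) \qquad \text{for all } n \geq 0,\ s \in \ell_{\infty}^+.
\end{equation*}
This I would prove by induction. The base case $n=0,1$ is immediate from the definition of $\hat\Gamma$. For the inductive step, Proposition \ref{prop_elem}(vi) gives $\hat\Gamma^{n+1}(s) = s \oplus \Gamma(\hat\Gamma^n(s))$, and substituting the induction hypothesis together with max-preservation of $\Gamma$ yields
\begin{equation*}
  \Gamma\Bigl(\bigoplus_{i=0}^n \Gamma^i(s)\Bigr) = \bigoplus_{i=0}^n \Gamma^{i+1}(s) = \bigoplus_{i=1}^{n+1}\Gamma^i(s),
\end{equation*}
so that $\hat\Gamma^{n+1}(s) = s \oplus \bigoplus_{i=1}^{n+1}\Gamma^i(s) = \bigoplus_{i=0}^{n+1}\Gamma^i(s)$.

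Given this identity, UGS of $\Gamma$ transfers to $\hat\Gamma$ with the \emph{same} comparison function: if $\|\Gamma^k(s)\|_\infty \leq \varphi(\|s\|_\infty)$ for all $k \geq 0$, then
\begin{equation*}
  \|\hat\Gamma^n(s)\|_\infty = \Bigl\|\bigoplus_{i=0}^n \Gamma^i(s)\Bigr\|_\infty = \max_{0 \leq i \leq n}\|\Gamma^i(s)\|_\infty \leq \varphi(\|s\|_\infty),
\end{equation*}
where the first equality uses the fact that $\|\cdot\|_\infty$ commutes with componentwise suprema of nonnegative vectors. This completes the equivalence. Finally, passing to the supremum over $n$ on both sides of $\hat\Gamma^n(s) = \bigoplus_{i=0}^n \Gamma^i(s)$ yields formula \eqref{eq_maxtype_hatq}; since all iterates are norm-bounded by $\varphi(\|s\|_\infty)$, the suprema lie in $\ell_\infty^+$ and the interchange of the two countable suprema is justified trivially (both equal the pointwise supremum of the doubly-indexed nonnegative family $(\Gamma^i(s)_j)_{i,j}$).

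I do not foresee a genuine obstacle here; the only subtle point is to recognize that max-preservation of $\Gamma$ upgrades the inequality of Proposition \ref{prop_elem}(v) to an equality, after which both claims drop out. No continuity or UGS hypothesis on $\hat\Gamma$ beyond what the statement supplies is needed for the algebraic identity, and the UGS transfer is then automatic.
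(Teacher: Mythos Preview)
Your proposal is correct and follows essentially the same approach as the paper: both arguments establish the identity $\hat{\Gamma}^n(s) = \bigoplus_{k=0}^n \Gamma^k(s)$ by induction using Proposition~\ref{prop_elem}(vi) together with max-preservation, and then read off both the UGS equivalence and formula~\eqref{eq_maxtype_hatq} from it. If anything, you spell out the final deductions more carefully than the paper does.
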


\begin{proof}
Using the fact that max-type gain operators are max-preserving together with Proposition \ref{prop_elem}(vi), we obtain%
\begin{equation*}
  \hat{\Gamma}^{n+1}(s) = s \oplus \Gamma(\hat{\Gamma}^n(s)) = s \oplus \Gamma(s) \oplus \Gamma^2(\hat{\Gamma}^{n-1}(s)).%
\end{equation*}
Proceeding inductively, this yields%
\begin{equation}\label{eq_maxtype_dyn_rel}
  \hat{\Gamma}^n(s) = \bigoplus_{k=0}^n \Gamma^k(s) \mbox{\quad for all\ } n \geq 0.%
\end{equation}
This formula easily implies both statements of the proposition.
\end{proof}

Proposition \ref{prop_maxop1} shows that the operator $\hat{Q}$ coincides with the operator $Q$, introduced in \cite{kawan2021lyapunov} for the construction of paths of strict decay in the case of max-type gain operators.%

Now, let us look at the class of subadditive and homogeneous gain operators.%

\begin{proposition}\label{prop_homogeneous1}
Assume that $\Gamma$ is a subadditive and homogenenous gain operator satisfying%
\begin{equation}\label{eq_linear_cond}
  \inf_{n \in \N} \|\Gamma^n(\unit)\|_{\infty} < 1.%
\end{equation}
Then the system \eqref{eq_auggainop_sys} induced by $\hat{\Gamma}$ is UGS.
\end{proposition}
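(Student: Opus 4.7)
The plan is to produce, for each initial state $s \in \ell_{\infty}^+$, a single fixed point $w$ of $\hat{\Gamma}$ that dominates $s$ and whose norm is linearly controlled by $\|s\|_{\infty}$. More precisely, I aim for $s \leq w$ and $\|w\|_{\infty} \leq C\|s\|_{\infty}$ with a constant $C$ independent of $s$. By Proposition \ref{prop_elem}(iv), such a $w$ is precisely a point of decay for $\Gamma$, i.e., $\Gamma(w) \leq w$. Once this is in hand, the fixed-point property $\hat{\Gamma}(w) = w$ and monotonicity of $\hat{\Gamma}$ give $\hat{\Gamma}^k(s) \leq \hat{\Gamma}^k(w) = w$ for every $k \in \Z_+$, so $\|\hat{\Gamma}^k(s)\|_{\infty} \leq C\|s\|_{\infty}$ and UGS is witnessed by $\varphi(r) := Cr \in \KC_{\infty}$.

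To construct such a $w$, I would form the Neumann-type series $w := \sum_{k=0}^{\infty}\Gamma^k(s)$, in analogy with the linear case. Its convergence in $\ell_{\infty}$ rests on geometric decay of $c_n := \|\Gamma^n(\unit)\|_{\infty}$. Homogeneity and monotonicity yield $\Gamma^{n+m}(\unit) = \Gamma^n(\Gamma^m(\unit)) \leq \Gamma^n(c_m\unit) = c_m \Gamma^n(\unit)$, hence the submultiplicative inequality $c_{n+m} \leq c_n c_m$. Combined with the hypothesis that $c_N < 1$ for some $N \in \N$, a standard division-with-remainder argument ($n = qN + r$) gives $c_n \to 0$ geometrically and therefore $C := \sum_{k=0}^{\infty}c_k < \infty$. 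Homogeneity also gives $\Gamma^k(s) \leq c_k\|s\|_{\infty}\unit$, so the partial sums $w_N := \sum_{k=0}^{N}\Gamma^k(s)$ are Cauchy in $\ell_{\infty}$; completeness and closedness of $\ell_{\infty}^+$ produce $w \in \ell_{\infty}^+$ with $\|w\|_{\infty} \leq C\|s\|_{\infty}$, and $w \geq s$ follows by inspecting the $k = 0$ term.

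It then remains to pass the decay inequality from finite to infinite sums, which I expect to be the main (albeit mild) technical obstacle. Applying subadditivity of $\Gamma$ inductively to $w_N$ yields $\Gamma(w_N) \leq \sum_{k=0}^{N}\Gamma^{k+1}(s) \leq w$. Norm-continuity of $\Gamma$ (Assumption \ref{ass_standing}) together with $w_N \to w$ then gives $\Gamma(w) \leq w$ in the limit, finishing the construction. The delicate point is precisely this limit passage: subadditivity a priori applies only to finitely many summands, so the argument relies crucially on $\|\cdot\|_{\infty}$-convergence of $w_N$, which is exactly what the geometric decay of $c_n$ buys. With $\Gamma(w) \leq w$ in place, Proposition \ref{prop_elem}(iv) converts $w$ into a fixed point of $\hat{\Gamma}$ and the UGS estimate follows automatically from the first paragraph.
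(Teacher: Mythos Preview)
Your proof is correct. Both your argument and the paper's hinge on summability of the Neumann series $\sum_{k\ge 0}\Gamma^k$, which you derive cleanly from the submultiplicativity of $c_n=\|\Gamma^n(\unit)\|_\infty$ (the paper quotes this as UGES of $\Gamma$ from \cite[Prop.~9]{mironchenko2021iss}). The organization, however, is genuinely different. The paper establishes the finite inductive bound
\[
  \hat{\Gamma}^n(\unit) \le \sum_{k=0}^{n}\Gamma^k(\unit)
\]
directly, using subadditivity at each step, and then bounds the right-hand side numerically; no limit passage through $\Gamma$ is needed. You instead build the dominating vector $w=\sum_{k\ge 0}\Gamma^k(s)$ first, prove $\Gamma(w)\le w$ by applying subadditivity to the partial sums $w_N$ and invoking norm-continuity of $\Gamma$ to let $N\to\infty$, and then use the fixed-point characterization $\hat{\Gamma}(w)=w$ together with monotonicity. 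What the paper's route buys is that it never needs continuity of $\Gamma$ (only finite sums and norms appear). What your route buys is a more conceptual picture: it explicitly produces, for each $s$, a point of decay $w$ with $s\le w$ and $\|w\|_\infty\le C\|s\|_\infty$, which is precisely the conclusion of Proposition~\ref{prop_pointsofdecay} with the sharp linear $\varphi(r)=Cr$; the UGS estimate then drops out without tracking $\hat{\Gamma}^n$ step by step.
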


\begin{proof}
We first prove by induction that%
\begin{equation}\label{eq_linear_sum_ugs}
  \hat{\Gamma}^n(\unit) \leq \sum_{k=0}^n \Gamma^k(\unit) \mbox{\quad for all\ } n \geq 0.%
\end{equation}
For $n = 0$, both sides of \eqref{eq_linear_sum_ugs} are equal to $\unit$. Now, assume that the statement holds for a fixed $n$. Then, using the assumptions on the $\mu_i$ and $\gamma_{ij}$, we obtain%
\begin{align*}
  \hat{\Gamma}^{n+1}_i(\unit) &= \hat{\Gamma}_i(\hat{\Gamma}^n(\unit)) = \max\{ \hat{\Gamma}^n_i(\unit), \Gamma_i(\hat{\Gamma}^n(\unit)) \} \leq \max\Bigl\{ \sum_{k=0}^n \Gamma^k_i(\unit), \Gamma_i\Bigl( \sum_{k=0}^n \Gamma^k(\unit) \Bigr) \Bigr\} \\
	&\leq \max\Bigl\{ \sum_{k=0}^n \Gamma^k_i(\unit), \sum_{k=0}^n \Gamma_i(\Gamma^k(\unit)) \Bigr\} = \max\Bigl\{ \sum_{k=0}^n \Gamma^k_i(\unit), \sum_{k=0}^n \Gamma_i^{k+1}(\unit)) \Bigr\} \\
	&\leq \max\Bigl\{ \sum_{k=0}^n \Gamma^k_i(\unit) + \Gamma^{n+1}_i(\unit), \unit + \sum_{k=1}^{n+1} \Gamma_i^k(\unit) \Bigr\} = \sum_{k=0}^{n+1} \Gamma^k_i(\unit).%
\end{align*}
This proves \eqref{eq_linear_sum_ugs}. In \cite[Prop.~9]{mironchenko2021iss}, it was shown that \eqref{eq_linear_cond} implies UGES of the system induced by $\Gamma$, i.e.~$\|\Gamma^k(s)\|_{\infty} \leq M\alpha^k\|s\|_{\infty}$ for all $s \in \ell_{\infty}^+$, $k \in \Z_+$ with constants $M > 0$ and $\alpha \in (0,1)$. This implies UGS of system \eqref{eq_auggainop_sys}. Indeed, for any $s \in \ell_{\infty}^+$,%
\begin{align*}
  \|\hat{\Gamma}^n(s)\|_{\infty} &\leq \|\hat{\Gamma}^n(\|s\|_{\infty}\unit)\|_{\infty} \leq \|s\|_{\infty} \sum_{k=0}^n  \|\Gamma^k(\unit)\|_{\infty} \\
	&\leq \|s\|_{\infty} \sum_{k=0}^{\infty} \|\Gamma^k(\unit)\|_{\infty} \leq \|s\|_{\infty} \sum_{k=0}^{\infty} M\alpha^k = \frac{M}{1-\alpha} \|s\|_{\infty}.%
\end{align*}
The proof is complete.
\end{proof}

The above proposition partially recovers \cite[Prop.~9]{mironchenko2021iss}, which shows that condition \eqref{eq_linear_cond} even guarantees the existence of a linear path of strict decay for $\Gamma$.%

In the case of a linear gain operator, condition \eqref{eq_linear_cond} is equivalent to the spectral radius condition $r(\Gamma) < 1$. Hence, we obtain the following corollary.%

\begin{corollary}
Let $\Gamma$ be a linear gain operator with $r(\Gamma) < 1$. Then the system induced by $\hat{\Gamma}$ is UGS.
\end{corollary}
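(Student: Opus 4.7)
The plan is to reduce the corollary to Proposition \ref{prop_homogeneous1} by verifying two things: first, that every linear gain operator is in particular a subadditive and homogeneous gain operator, and second, that the spectral radius condition $r(\Gamma) < 1$ is equivalent to condition \eqref{eq_linear_cond}, namely $\inf_{n\in\N}\|\Gamma^n(\unit)\|_{\infty} < 1$.

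The first point is essentially formal. A linear gain operator comes from the sum-type MAF $\mu_i(s) = \sum_j s_j$ together with linear $\gamma_{ij}$. Sum-type MAFs are additive and positively homogeneous, so they are certainly subadditive and homogeneous in the sense required by Proposition \ref{prop_homogeneous1}. Consequently $\Gamma$ itself satisfies $\Gamma(rs) = r\Gamma(s)$ for $r \geq 0$ and $\Gamma(s^1 + s^2) = \Gamma(s^1) + \Gamma(s^2) \leq \Gamma(s^1) + \Gamma(s^2)$, and all the hypotheses of the subadditive homogeneous case are met as soon as we have the well-definedness that is part of Assumption \ref{ass_standing}.

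For the second point, I would use positivity together with Gelfand's formula. The key observation is that, since $\Gamma$ extends to a positive bounded linear operator on $\ell_{\infty}$, every $s \in \ell_{\infty}$ with $\|s\|_{\infty} \leq 1$ satisfies $-\unit \leq s \leq \unit$, so monotonicity gives $|\Gamma^n(s)_i| \leq \Gamma^n(\unit)_i$ componentwise, hence
\begin{equation*}
  \|\Gamma^n\| = \sup_{\|s\|_{\infty} \leq 1}\|\Gamma^n(s)\|_{\infty} = \|\Gamma^n(\unit)\|_{\infty}.
\end{equation*}
By Gelfand's spectral radius formula, $r(\Gamma) = \lim_{n\to\infty}\|\Gamma^n\|^{1/n} = \lim_{n\to\infty}\|\Gamma^n(\unit)\|_{\infty}^{1/n}$. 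If $r(\Gamma) < 1$, this limit forces $\|\Gamma^{n_0}(\unit)\|_{\infty} < 1$ for some $n_0 \in \N$, so $\inf_{n\in\N}\|\Gamma^n(\unit)\|_{\infty} < 1$. Conversely, if some iterate satisfies $\|\Gamma^{n_0}(\unit)\|_{\infty} < 1$, then $\|\Gamma^{kn_0}\| \leq \|\Gamma^{n_0}\|^k$ decays geometrically, giving $r(\Gamma) < 1$.

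Once these two ingredients are in place, the corollary follows immediately by applying Proposition \ref{prop_homogeneous1} to conclude that the system induced by $\hat{\Gamma}$ is UGS. I do not expect a real obstacle here: the only subtlety is making sure that the identity $\|\Gamma^n\| = \|\Gamma^n(\unit)\|_{\infty}$ is justified, which is the standard fact that a positive operator on $\ell_{\infty}$ attains its norm at $\unit$; all the work has really been done in Proposition \ref{prop_homogeneous1}.
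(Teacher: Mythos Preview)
Your proposal is correct and follows exactly the paper's approach: the paper simply remarks that for a linear gain operator, condition \eqref{eq_linear_cond} is equivalent to $r(\Gamma) < 1$, and then invokes Proposition \ref{prop_homogeneous1}. You have supplied the details of this equivalence (via positivity and Gelfand's formula) that the paper leaves implicit, but the route is the same.
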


\subsection{Stability and small-gain conditions}

In this subsection, we relate the uniform global stability of the system induced by $\hat{\Gamma}$ to small-gain conditions previously studied in the literature on the small-gain approach for both finite and infinite networks.%

\begin{definition}
We say that the gain operator $\Gamma$ satisfies the%
\begin{itemize}
\item \emph{small-gain condition (SGC)} if $\Gamma(s) \not\geq s$ for all $s \in \ell_{\infty}^+ \setminus \{0\}$.%
\item \emph{uniform small-gain condition} if there exists $\eta \in \KC_{\infty}$ such that%
\begin{equation*}
  \dist(\Gamma(s) - s,\ell_{\infty}^+) \geq \eta(\|s\|_{\infty}) \mbox{\quad for all\ } s \in \ell_{\infty}^+.%
\end{equation*}
\item \emph{monotone bounded invertibility (MBI) property} if there exists $\xi \in \KC_{\infty}$ such that for all $s,b \in \ell_{\infty}^+$ the following implication holds:%
\begin{equation}\label{eq_mbi}
  (\id - \Gamma)(s) \leq b \quad \Rightarrow \quad \|s\|_{\infty} \leq \xi(\|b\|_{\infty}).%
\end{equation}
\item \emph{$\oplus$-MBI property} if there exists $\varphi \in \KC_{\infty}$ such that for all $s,b \in \ell_{\infty}^+$ the following implication holds:%
\begin{equation*}
  s \leq \Gamma(s) \oplus b \quad \Rightarrow \quad \|s\|_{\infty} \leq \varphi(\|b\|_{\infty}).%
\end{equation*}
\end{itemize}
\end{definition}

\begin{remark}
The uniform small-gain condition and the monotone bounded invertibility property were introduced (and shown to be equivalent) in \cite{mironchenko2021nonlinear}. The name ``monotone bounded invertibility property'' actually refers to the case when the gain operator is linear. Then the validity of the implication \eqref{eq_mbi} implies that $\id - \Gamma$ is invertible with a bounded inverse, cf.~\cite[Thm.~3.3]{gluck2021stability}.%
\end{remark}

The following proposition is the main result of this subsection.%

\begin{proposition}\label{prop_smallgain_conds}
Consider the following statements:%
\begin{enumerate}
\item[(a)] The operator $\Gamma$ satisfies the uniform SGC.%
\item[(b)] The operator $\Gamma$ satisfies the MBI property.%
\item[(c)] The operator $\Gamma$ satisfies the $\oplus$-MBI property.%
\item[(d)] The system \eqref{eq_auggainop_sys} induced by $\hat{\Gamma}$ is UGS and the system \eqref{eq_gainop_sys} induced by $\Gamma$ is globally componentwise attractive.%
\item[(e)] The system \eqref{eq_auggainop_sys} induced by $\hat{\Gamma}$ is UGS and $\Gamma$ satisfies the SGC.%
\item[(f)] The system \eqref{eq_auggainop_sys} induced by $\hat{\Gamma}$ is UGS and $\Gamma$ has no non-zero fixed points.%
\end{enumerate}
Then%
\begin{equation*}
  \mbox{(a)} \Leftrightarrow \mbox{(b)} \begin{array}{c} \Rightarrow \\ \nLeftarrow \end{array} \mbox{(c)} \Rightarrow  \mbox{(d)} \Leftrightarrow \mbox{(e)} \Leftrightarrow \mbox{(f)}.%
\end{equation*}
\end{proposition}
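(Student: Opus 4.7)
My plan is to establish the diagram via the route \textbf{(a)}$\Leftrightarrow$\textbf{(b)}$\Rightarrow$\textbf{(c)}$\Rightarrow$\textbf{(d)} and to close the loop with \textbf{(d)}$\Rightarrow$\textbf{(e)}$\Rightarrow$\textbf{(f)}$\Rightarrow$\textbf{(d)}, treating the separation of \textbf{(c)} from \textbf{(b)} via an explicit linear example. The equivalence \textbf{(a)}$\Leftrightarrow$\textbf{(b)} is already proved in \cite{mironchenko2021nonlinear}, as flagged in the preceding Remark, so I would simply cite it. For \textbf{(b)}$\Rightarrow$\textbf{(c)}, the componentwise inequality $\Gamma(s) \oplus b \leq \Gamma(s) + b$ turns the hypothesis of $\oplus$-MBI into that of MBI: $s \leq \Gamma(s) \oplus b$ implies $(\id - \Gamma)(s) \leq b$, and the MBI function $\xi$ serves as the $\oplus$-MBI function $\varphi$.

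To separate \textbf{(c)} from \textbf{(b)}, I would use a diagonal linear gain operator $\Gamma(s)_i := (1 - 1/i) s_i$ (a simple bipartite variant can be taken to respect the convention $I_i \subset \N \setminus \{i\}$ if necessary). The coefficient $(1-1/i) < 1$ forces $s_i \leq b_i$ whenever $s \leq \Gamma(s) \oplus b$, so $\oplus$-MBI holds with $\varphi = \id$; but $(\id - \Gamma)(s)_i = s_i/i$, so choosing $s = n e_n$ and $b = \unit$ yields $(\id - \Gamma)(s) \leq \unit$ while $\|s\|_\infty = n$ is unbounded, violating MBI.

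The substantive implication is \textbf{(c)}$\Rightarrow$\textbf{(d)}. For the UGS part, I apply $\oplus$-MBI to the orbit itself: Proposition~\ref{prop_elem}(iii) gives $\hat{\Gamma}^{n-1}(s) \leq \hat{\Gamma}^n(s)$, whence monotonicity of $\Gamma$ combined with Proposition~\ref{prop_elem}(vi) yields
\begin{equation*}
\hat{\Gamma}^n(s) = s \oplus \Gamma(\hat{\Gamma}^{n-1}(s)) \leq s \oplus \Gamma(\hat{\Gamma}^n(s));
\end{equation*}
choosing $b := s$ in $\oplus$-MBI then gives $\|\hat{\Gamma}^n(s)\|_\infty \leq \varphi(\|s\|_\infty)$ uniformly in $n$. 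For the componentwise attractivity clause, I first observe that $\oplus$-MBI applied with $b = 0$ rules out non-zero fixed points of $\Gamma$, so the hypothesis of \textbf{(f)} holds; the attractivity is then inherited from the implication \textbf{(f)}$\Rightarrow$\textbf{(d)} proved below.

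For the closing loop, \textbf{(d)}$\Rightarrow$\textbf{(e)} holds because a failure of SGC would yield some $s \neq 0$ with $\Gamma(s) \geq s$, hence inductively $\Gamma^k(s) \geq s$ for all $k$, contradicting componentwise attractivity on any $i$ with $s_i > 0$. The step \textbf{(e)}$\Rightarrow$\textbf{(f)} is immediate, since any non-zero fixed point would satisfy $\Gamma(s) \geq s$. For \textbf{(f)}$\Rightarrow$\textbf{(d)}, fix $s \in \ell_\infty^+$; Proposition~\ref{prop_pointsofdecay} (applicable thanks to $\hat{\Gamma}$ UGS) furnishes a dominating $s^* \geq s$ with $\Gamma(s^*) \leq s^*$. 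The sequence $(\Gamma^k(s^*))_k$ is then componentwise non-increasing and bounded below by $0$, so it converges componentwise to some $t \in \ell_\infty^+$; by Lemma~\ref{lem_weak_continuity} the limit $t$ is a fixed point of $\Gamma$, and hence $t = 0$ by hypothesis. Monotonicity of $\Gamma$ finally squeezes $\pi_i \circ \Gamma^k(s) \leq \pi_i \circ \Gamma^k(s^*) \to 0$ for every $i$. The main obstacle I anticipate is precisely this last step: combining the existence of a dominating decay point (Proposition~\ref{prop_pointsofdecay}) with componentwise monotone convergence in $\ell_\infty^+$ and the weak continuity of Lemma~\ref{lem_weak_continuity} to identify the limit as a fixed point—this is where the infinite-dimensional nature of the problem really bites, whereas all the other implications are essentially bookkeeping.
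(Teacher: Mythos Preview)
Your proof is correct and follows essentially the same route as the paper: the same citation for (a)$\Leftrightarrow$(b), the same use of $\hat{\Gamma}^n(s) \leq \Gamma(\hat{\Gamma}^n(s)) \oplus s$ to extract UGS from $\oplus$-MBI, and the same dominating-point-of-decay argument (Proposition~\ref{prop_pointsofdecay} plus Lemma~\ref{lem_weak_continuity}) for componentwise attractivity, with your cycle (d)$\Rightarrow$(e)$\Rightarrow$(f)$\Rightarrow$(d) merely reorganizing the paper's pairwise equivalences. The one real difference is (c)$\nRightarrow$(b): the paper cites \cite[Ex.~6.3 and Thm.~6.4]{ruffer2010monotone} for a max-type counterexample, whereas you build an explicit linear one; note that your diagonal operator $\Gamma(s)_i = (1-1/i)s_i$ does violate the paper's convention $I_i \subset \N\setminus\{i\}$, but the bipartite variant you allude to (e.g.\ pairs $\{2k-1,2k\}$ coupled by $\gamma_{2k-1,2k}=\gamma_{2k,2k-1}=(1-1/k)\,\id$) works without issue.
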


\begin{proof}
The equivalence ``(a) $\Leftrightarrow$ (b)'' has been proven in \cite[Prop.~7.1]{mironchenko2021nonlinear}.%

``(b) $\Rightarrow$ (c)'': Assume that $s \leq \Gamma(s) \oplus b$ for some $s,b \in \ell_{\infty}^+$. Since the maximum of two numbers is bounded above by their sum, this implies $s \leq \Gamma(s) + b$, or equivalently, $(\id - \Gamma)(s) \leq b$. Hence, if (b) holds with some $\xi \in \KC_{\infty}$, then (c) holds with $\varphi = \xi$.%
 
``(c) $\Rightarrow$ (d)'': By Proposition \ref{prop_elem}(vi), we have $\hat{\Gamma}^n(s) = s \oplus \Gamma(\hat{\Gamma}^{n-1}(s))$ for all $n \geq 1$, $s \in \ell_{\infty}^+$. This implies $\hat{\Gamma}^n(s) \leq \hat{\Gamma}^{n+1}(s) = \Gamma(\hat{\Gamma}^n(s)) \oplus s$. Hence, by assumption, $\|\hat{\Gamma}^n(s)\|_{\infty} \leq \varphi(\|s\|_{\infty})$. Since this holds for every $n$, the system induced by $\hat{\Gamma}$ is UGS. Consequently, for every $s \in \ell_{\infty}^+$, the operator%
\begin{equation*}
  \hat{Q}(s) = \bigoplus_{k=0}^{\infty}\hat{\Gamma}^k(s)%
\end{equation*}
is well-defined and $\Gamma(\hat{Q}(s)) \leq \hat{Q}(s)$. Since $\Gamma$ is a monotone operator, this implies $\Gamma^n(\hat{Q}(s)) \leq \Gamma^{n-1}(\hat{Q}(s))$ for all $n\geq 1$. Hence, componentwise, the sequence $s^n := \Gamma^n(\hat{Q}(s))$ is monotonically decreasing and bounded below by zero. Therefore, each component sequence converges to a nonnegative real number. Putting all these numbers together in a vector $\hat{s}$, Lemma \ref{lem_weak_continuity} shows that $\Gamma(\hat{s}) = \hat{s}$. This implies $\hat{s} \leq \Gamma(\hat{s}) \oplus 0$, and hence (c) yields $\hat{s} = 0$. We thus observed that the trajectory $(\Gamma^n(\hat{Q}(s)))_{n\in\N}$ converges to zero componentwise. Since $\Gamma^n(s) \leq \Gamma^n(\hat{Q}(s))$, the same is true for the trajectory of $s$.%

``(c) $\nRightarrow$ (b)'': \cite[Ex.~6.3 and Thm.~6.4]{ruffer2010monotone} shows that (b) and (c) are \emph{not} equivalent for max-type gain operators.%

``(d) $\Rightarrow$ (e)'': Assume that $s \leq \Gamma(s)$ for some $s \in \ell_{\infty}^+$. This implies $\Gamma^{n-1}(s) \leq \Gamma^n(s)$ for all $n \geq 1$. Hence, each component sequence $(\Gamma^n_i(s))_{n\in\N}$ is monotonically non-decreasing, but by assumption converges to zero. Hence, $s = 0$, which shows that $\Gamma$ satisfies the SGC.%

``(e) $\Rightarrow$ (d)'': For each $s \in \ell_{\infty}^+$, consider $s^* := \hat{Q}(s)$. We know that $\Gamma(s^*) \leq s^*$, implying that each component sequence $(\Gamma^n_i(s^*))_{n\in\N}$ is monotonically decreasing. From Lemma \ref{lem_weak_continuity}, it follows that the componentwise limit is a fixed point of $\Gamma$. Since $\Gamma$ satisfies the SGC by assumption, $s=0$ is the only fixed point.%

``(e) $\Rightarrow$ (f)'': This is trivial.%

``(f) $\Rightarrow$ (e)'': Assume that $\Gamma(s) \geq s$ for some $s \in \ell_{\infty}^+$. This implies $\hat{\Gamma}(s) = s \oplus \Gamma(s) = \Gamma(s)$, $\hat{\Gamma}^2(s) = s \oplus \Gamma(\hat{\Gamma}(s)) = s \oplus \Gamma^2(s) = \Gamma^2(s)$. Indeed, by induction one can show that $\hat{\Gamma}^n(s) = \Gamma^n(s)$ for all $n \geq 0$. Since the system induced by $\hat{\Gamma}$ is UGS, it follows that the trajectory $(\Gamma^n(s))_{n\in\N}$ converges componentwise to some $s^* \in \ell_{\infty}^+$. By Lemma \ref{lem_weak_continuity}, we have $\Gamma(s^*) = s^*$. By assumption, $s^* = 0$ and thus $s = 0$, showing that $\Gamma$ satisfies the SGC.
\end{proof}

It is an open question whether (c) and (d) are equivalent. In the following proposition, we show that they are equivalent for max-type gain operators.%

\begin{proposition}\label{prop_hatgamma_ugs}
For a max-type gain operator $\Gamma$, consider the following statements:%
\begin{enumerate}
\item[(a)] The system \eqref{eq_gainop_sys} induced by $\Gamma$ is UGS and globally componentwise attractive.%
\item[(b)] The operator $\Gamma$ satisfies the max-robust SGC:\footnote{This property was introduced in \cite{kawan2021lyapunov}.} there exists $\omega \in \KC_{\infty}$, $\omega < \id$, such that for all $i,j\in\N$%
\begin{equation*}
  \Gamma(s) \oplus \omega(s_j)e_i \not\geq s \mbox{\quad for all\ } s \in \ell_{\infty}^+ \setminus \{0\}.%
\end{equation*}
\item[(c)] The operator $\Gamma$ satisfies the $\oplus$-MBI property.%
\item[(d)] The system \eqref{eq_auggainop_sys} induced by $\hat{\Gamma}$ is UGS and the system \eqref{eq_gainop_sys} induced by $\Gamma$ is globally componentwise attractive.%
\item[(e)] The system \eqref{eq_auggainop_sys} induced by $\hat{\Gamma}$ is UGS and each of its trajectories is componentwise eventually constant.%
\end{enumerate}
Then (a) $\Leftrightarrow$ (b) $\Leftrightarrow$ (c) $\Leftrightarrow$ (d) $\begin{array}{c} \Rightarrow \\ \nLeftarrow \end{array}$ (e). \end{proposition}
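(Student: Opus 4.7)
My plan is to establish the cycle (a)$\Leftrightarrow$(d)$\Leftrightarrow$(c) and the equivalence (a)$\Leftrightarrow$(b), then to derive (d)$\Rightarrow$(e), and finally to exhibit a simple counterexample showing (e)$\nRightarrow$(d). The equivalence (a)$\Leftrightarrow$(d) is immediate from Proposition \ref{prop_maxop1}, which for a max-type $\Gamma$ identifies UGS of the $\Gamma$- and $\hat{\Gamma}$-systems; the global componentwise attractivity appears identically in both conditions. The implication (c)$\Rightarrow$(d) is already covered by Proposition \ref{prop_smallgain_conds}, whose proof makes no use of max-preservation. For (a)$\Leftrightarrow$(b) I would appeal directly to \cite{kawan2021lyapunov}, where the max-robust SGC was introduced precisely as a characterization of UGS together with componentwise attractivity for max-type operators.

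The genuinely new direction is (d)$\Rightarrow$(c). Here I would start from the hypothesis $s \leq \Gamma(s) \oplus b$ and, applying $\Gamma$ and invoking max-preservation, establish by induction on $n$ the telescoping bound
\[
  s \leq \Gamma^n(s) \oplus \bigoplus_{k=0}^{n-1}\Gamma^k(b) \qquad \text{for all } n \geq 1.
\]
By formula \eqref{eq_maxtype_dyn_rel}, the right-most $\oplus$-sum is dominated by $\hat{\Gamma}^{n-1}(b)$, whose $\infty$-norm is controlled by $\varphi(\|b\|_{\infty})$ via UGS of $\hat{\Gamma}$; meanwhile, $\Gamma^n(s)_i \to 0$ componentwise by the componentwise attractivity hypothesis. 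Taking $\liminf_n$ componentwise on both sides then yields $s_i \leq \varphi(\|b\|_{\infty})$ for every $i$, hence $\|s\|_{\infty} \leq \varphi(\|b\|_{\infty})$, which is exactly the $\oplus$-MBI property with the same gain $\varphi$.

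For (d)$\Rightarrow$(e), formula \eqref{eq_maxtype_dyn_rel} gives $\hat{\Gamma}^n(s)_i = \max_{0 \leq k \leq n}\Gamma^k(s)_i$. This sequence is non-decreasing in $n$ and, by componentwise attractivity, $\Gamma^k(s)_i \to 0$ as $k\to\infty$. Hence either the supremum over $k$ is zero (and the sequence is identically zero) or it is positive and attained at some finite index $k^*$, after which the running maximum no longer increases; either way, $\hat{\Gamma}^n(s)_i$ is eventually constant. For the non-implication (e)$\nRightarrow$(d), I would take the simple ``swap'' operator $\Gamma_1(s) = s_2$, $\Gamma_2(s) = s_1$, $\Gamma_i(s) = 0$ for $i \geq 3$. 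The orbit of $e_1$ under $\Gamma$ alternates between $e_1$ and $e_2$ and is therefore not componentwise attractive, whereas $\hat{\Gamma}^n(e_1) = e_1 + e_2$ for every $n \geq 1$, trivially witnessing UGS and eventual constancy.

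I expect the main obstacle to be the iteration argument in (d)$\Rightarrow$(c): one must invoke max-preservation at precisely the right step and justify the componentwise limit exchange against the $\oplus$-sum. The remaining steps are essentially bookkeeping built on formula \eqref{eq_maxtype_dyn_rel} and on previously established results.
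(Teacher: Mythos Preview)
Your proposal is correct. The overall logical skeleton matches the paper's: (a)$\Leftrightarrow$(d) via Proposition~\ref{prop_maxop1}, (a)$\Leftrightarrow$(b) by citation to \cite{kawan2021lyapunov}, and (d)$\Rightarrow$(e) together with the two-system swap counterexample for (e)$\nRightarrow$(d) are essentially identical to what the paper does.

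The one genuine difference is in how (c) is hooked into the cycle. The paper does \emph{not} prove (d)$\Rightarrow$(c) directly; instead it proves the short implication (c)$\Rightarrow$(b) (a two-line contradiction argument with $\omega<\varphi^{-1}$) and then closes the loop by invoking ``(a)$\Rightarrow$(c): shown in the proof of \cite[Prop.~V.2]{kawan2021lyapunov}''. Your route is different and more self-contained: starting from $s \le \Gamma(s)\oplus b$, you exploit max-preservation to iterate to $s \le \Gamma^n(s)\oplus \bigoplus_{k=0}^{n-1}\Gamma^k(b)$, bound the second block by $\varphi(\|b\|_\infty)$ via \eqref{eq_maxtype_dyn_rel} and UGS of $\hat\Gamma$, and kill the first block componentwise using attractivity. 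This is a clean argument that does not require inspecting the proof in the external reference, and it makes transparent exactly where max-preservation is used. The paper's route, by contrast, trades that iteration for a very short (c)$\Rightarrow$(b) step but outsources the harder direction to \cite{kawan2021lyapunov}. Either way the cycle closes; yours is arguably the more elementary and self-contained of the two.
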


\begin{proof}
``(a) $\Leftrightarrow$ (b)'': This was shown in \cite[Prop.~V.2]{kawan2021lyapunov}.%

``(a) $\Leftrightarrow$ (d)'': This follows from Proposition \ref{prop_maxop1}.%

``(c) $\Rightarrow$ (b)'':  Assume that condition (c) holds for some $\varphi \in \KC_{\infty}$ and that for some $i,j \in \N$ and $s \neq 0$ we have $\Gamma(s) \oplus \omega(s_j)e_i \geq s$, where $\omega < \varphi^{-1}$. Then%
\begin{equation*}
  \|s\|_{\infty} \leq \varphi(\omega(s_j)) < s_j \leq \|s\|_{\infty},%
\end{equation*}
which is a contradiction. Hence, (b) holds for any $\omega < \varphi^{-1}$.%

``(a) $\Rightarrow$ (c)'': This was shown in the proof of \cite[Prop.~V.2]{kawan2021lyapunov}.%

``(d) $\Rightarrow$ (e)'': By Proposition \ref{prop_maxop1}, the system induced by $\Gamma$ is UGS if and only if the one induced by $\hat{\Gamma}$ is UGS. By \eqref{eq_maxtype_dyn_rel}, the dynamics of $\Gamma$ and of $\hat{\Gamma}$ are related by%
\begin{equation}\label{eq_gamma_hatgamma}
  \hat{\Gamma}^k_i(s) = \max\{s_i,\Gamma_i(s),\Gamma^2_i(s),\ldots,\Gamma^k_i(s)\}.%
\end{equation}
Hence, the componentwise convergence of $\Gamma^k(s)$ to zero implies that for every $s$ there exists $k_0$ such that%
\begin{equation*}
  \hat{\Gamma}^k_i(s) = \hat{\Gamma}^{k_0}_i(s) \mbox{\quad for all\ } k \geq k_0.%
\end{equation*}

``(e) $\nRightarrow$ (d)'': Consider the interconnection of two systems with gains $\gamma_{12} = \gamma_{21} = \id$ and max-type gain operator. It then follows that $\hat{\Gamma}(s) = \max\{s_1,s_2\}\unit$ and $\hat{\Gamma}^k(s) = \hat{\Gamma}(s)$ for all $k \geq 2$. Hence, the system induced by $\hat{\Gamma}$ is UGS and each of its trajectories is eventually constant. However, the system induced by $\Gamma$ is obviously not globally componentwise attractive.
\end{proof}

For sum-type operators, we can provide the following sufficient condition for the $\oplus$-MBI property.%

\begin{proposition}
Assume that $\Gamma$ is a sum-type operator and the following assumptions hold:%
\begin{enumerate}
\item[(i)] $\Gamma^2$ satisfies the MBI property.%
\item[(ii)] There exists some $\eta \in \KC_{\infty}$ with $\gamma_{ij} \leq \eta$ for all $i,j \in \N$.%
\item[(iii)] There exists a bound on the cardinality of $I_i$.%
\end{enumerate}
Then $\Gamma$ satisfies the $\oplus$-MBI property.
\end{proposition}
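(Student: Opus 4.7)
The plan is to reduce a hypothesis of the form $s \leq \Gamma(s) \oplus b$ to an inequality involving $\Gamma^2$ to which assumption (i) can be applied, while controlling the ``residual'' by $b$ using (ii) and (iii).

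First, I would substitute the hypothesis into itself by applying $\Gamma$ and monotonicity, obtaining
\begin{equation*}
  s \leq \Gamma(s) \oplus b \leq \Gamma(\Gamma(s) \oplus b) \oplus b.
\end{equation*}
The key computation exploits the sum-type structure together with the monotonicity of each $\gamma_{ij}$: since $\gamma_{ij}$ is non-decreasing,
\begin{equation*}
  \gamma_{ij}(\max\{\Gamma_j(s),b_j\}) = \max\{\gamma_{ij}(\Gamma_j(s)),\gamma_{ij}(b_j)\} \leq \gamma_{ij}(\Gamma_j(s)) + \gamma_{ij}(b_j).
\end{equation*}
Summing over $j \in I_i$ yields $\Gamma_i(\Gamma(s) \oplus b) \leq \Gamma_i^2(s) + \Gamma_i(b)$. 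Combined with $a \oplus c \leq a + c$ for nonnegative $a,c$, this gives the pointwise inequality
\begin{equation*}
  s \leq \Gamma^2(s) + \Gamma(b) + b, \quad \mbox{i.e.,} \quad (\id - \Gamma^2)(s) \leq \Gamma(b) + b.
\end{equation*}

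Next, I would invoke assumption (i): by the MBI property of $\Gamma^2$, there exists $\xi \in \KC_{\infty}$ with $\|s\|_{\infty} \leq \xi(\|\Gamma(b) + b\|_{\infty}) \leq \xi(\|\Gamma(b)\|_{\infty} + \|b\|_{\infty})$. It then only remains to control $\|\Gamma(b)\|_{\infty}$ by $\|b\|_{\infty}$. Using (ii) and (iii), let $N := \sup_{i\in\N}|I_i| < \infty$; then
\begin{equation*}
  \Gamma_i(b) = \sum_{j \in I_i} \gamma_{ij}(b_j) \leq \sum_{j \in I_i} \eta(b_j) \leq N \eta(\|b\|_{\infty}),
\end{equation*}
uniformly in $i$. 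Setting $\varphi(r) := \xi(N\eta(r) + r)$ gives a $\KC_{\infty}$-function (composition/sum/identity of $\KC_{\infty}$-functions) that witnesses the $\oplus$-MBI property.

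The proof is essentially a two-step reduction and no step is genuinely difficult; the main subtlety is the first step, where one must realize that for a \emph{sum-type} operator the monotonicity of the scalar gains allows one to pull $\gamma_{ij}$ through a maximum and then separate the $s$-contribution from the $b$-contribution using $\max \leq$ sum. The conditions (ii) and (iii) are only used at the very end, to ensure that a single application of $\Gamma$ to $b$ produces a comparison-function bound; without a uniform bound on the gains and the neighborhood cardinalities, $\Gamma(b)$ could fail to be controlled by $\|b\|_{\infty}$ even though $b$ itself is bounded.
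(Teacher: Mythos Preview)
Your proof is correct and follows essentially the same approach as the paper's own proof: substitute the hypothesis into itself, use the sum-type structure to split $\Gamma(\Gamma(s)\oplus b) \leq \Gamma^2(s) + \Gamma(b)$, apply the MBI property of $\Gamma^2$, and bound $\|\Gamma(b)\|_{\infty}$ via (ii) and (iii). Your final bound $\varphi(r) = \xi(N\eta(r) + r)$ is in fact slightly cleaner than the paper's, which introduces an unnecessary splitting $\varphi(a+b) \leq \varphi(2a) + \varphi(2b)$ before bounding $\|\Gamma(b)\|_{\infty}$.
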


\begin{proof}
Assume that $s \leq \Gamma(s) \oplus b$ for some $s,b \in \ell_{\infty}^+$. This implies $s \leq \Gamma(\Gamma(s) \oplus b) \oplus b$. Now, observe that%
\begin{align*}
  \Gamma_i(\Gamma(s) \oplus b) &= \sum_{j \in I_i} \gamma_{ij}(\max\{\Gamma_j(s),b_j\}) = \sum_{j \in I_i} \max\{\gamma_{ij}(\Gamma_j(s)),\gamma_{ij}(b_j)\} \\
	&\leq \sum_{j \in I_i} \gamma_{ij}(\Gamma_j(s)) + \sum_{j \in I_i} \gamma_{ij}(b_j) = \Gamma_i^2(s) + \Gamma_i(b).%
\end{align*}
Hence, $s \leq (\Gamma^2(s) + \Gamma(b)) \oplus b \leq \Gamma^2(s) + (b + \Gamma(b))$. By assumption, this implies $\|s\|_{\infty} \leq \varphi( \|b + \Gamma(b)\|_{\infty} )$ for some $\varphi \in \KC_{\infty}$. Therefore,%
\begin{equation*}
  \|s\|_{\infty} \leq \varphi( \|b\|_{\infty} + \|\Gamma(b)\|_{\infty} ) \leq \varphi(2\|b\|_{\infty}) + \varphi(2\|\Gamma(b)\|_{\infty}).%
\end{equation*}
Now, observe that $b \leq \|b\|_{\infty}\unit$, and hence our assumptions imply%
\begin{align*}
  \|\Gamma(b)\|_{\infty} \leq \|\Gamma(\|b\|_{\infty}\unit)\|_{\infty} = \sup_{i\in\N}\sum_{j\in I_i}\gamma_{ij}(\|b\|_{\infty}) \leq (\sup_i\# I_i) \eta(\|b\|_{\infty}) =: \psi(\|b\|_{\infty}).%
\end{align*}
Altogether, $\|s\|_{\infty} \leq \varphi(2\|b\|_{\infty}) + \varphi(2 \psi(\|b\|_{\infty}))$, which completes the proof.
\end{proof}

\section{Uniform global asymptotic stability}\label{sec_ugas}

In this section, our goal is to characterize uniform global asymptotic stability of the system \eqref{eq_gainop_sys} induced by $\Gamma$ in terms of the behavior of $\Gamma$ in one iterate. Our motivation is the result \cite[Thm.~VI.1]{kawan2021lyapunov}, which shows that in the max-type case UGAS together with some mild regularity assumptions guarantees the existence of a path of decay.%

We need the following definition.%

\begin{definition}
For any $i,j \in \N$ and $k\in\Z_+$, we say that subsystem $\Sigma_j$ influences subsystem $\Sigma_i$ in $k$ steps of time if there exist indices $j_1,j_2,\ldots,j_{k+1} \in \N$ with $j = j_1$ and $i = j_{k+1}$ such that $j_l \in I_{j_{l+1}}$ for $l = 1,\ldots,k$. We write $\NC^-_i(n)$ for the set of all indices $j$ such that $\Sigma_j$ influences $\Sigma_i$ in $k \in \{0,1,\ldots,n-1\}$ steps of time. We write $\NC^+_i(n)$ for the set of all indices $j$ such that $\Sigma_j$ is influenced by $\Sigma_i$ in $k \in \{0,1,\ldots,n-1\}$ steps of time.  
\end{definition}

Observe that in the particular case $k = 0$, the definition says that each subsystem $\Sigma_i$ influences itself in $0$ steps of time, since the condition reduces to $i = j$. In particular, observe that the sets $\NC^-_i(n)$ and $\NC^+_i(n)$ are never empty.%

The following lemma is the basis of our investigations in this section.%

\begin{lemma}\label{lem_hatgg}
Let $s \in \ell_{\infty}^+$, $i \in \N$ and $k \geq 1$. Assume that $\Gamma_j(s) \geq s_j$ for all $j \in \NC^-_i(k)$. Then%
\begin{equation}\label{eq_ghge}
  \hat{\Gamma}^k_i(s) = \Gamma^k_i(s).%
\end{equation}
\end{lemma}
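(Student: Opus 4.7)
The plan is to prove the identity by induction along the chain of influences, tracking simultaneously not just the equality $\hat{\Gamma}^n_j(s) = \Gamma^n_j(s)$ but also the componentwise monotonicity of the trajectory $s_j, \Gamma_j(s), \Gamma^2_j(s), \ldots$ at the relevant indices. These two properties must be propagated together, because the collapse of the $\oplus$ in the recursion for $\hat{\Gamma}^n$ requires the $\Gamma$-trajectory to be non-decreasing at the index under consideration, and this non-decrease is itself obtained only if the analogous non-decrease holds at the predecessors.

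Concretely, I introduce the nested family $A_n := \NC^-_i(k - n + 1)$ for $n = 1, \ldots, k$, so that $A_1 = \NC^-_i(k)$ and $A_k = \{i\}$. The key combinatorial fact is that $I_j \subseteq A_n$ whenever $j \in A_{n+1}$, because concatenating a one-step influence $j' \to j$ with a chain $j \to i$ of length at most $k - n - 1$ produces a chain $j' \to i$ of length at most $k - n$. I then prove by induction on $n \in \{1, \ldots, k\}$ the joint statement: for every $j \in A_n$, (a) $\hat{\Gamma}^n_j(s) = \Gamma^n_j(s)$, and (b) the finite sequence $s_j, \Gamma_j(s), \ldots, \Gamma^n_j(s)$ is non-decreasing.

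The base case $n = 1$ is immediate from the hypothesis: for $j \in A_1 = \NC^-_i(k)$ we have $\Gamma_j(s) \geq s_j$, so $\hat{\Gamma}_j(s) = s_j \oplus \Gamma_j(s) = \Gamma_j(s)$. For the inductive step, I fix $j \in A_{n+1}$ and invoke Proposition \ref{prop_elem}(vi) to write $\hat{\Gamma}^{n+1}_j(s) = s_j \oplus \Gamma_j(\hat{\Gamma}^n(s))$. Since $I_j \subseteq A_n$ and $\Gamma_j$ depends on its argument only through the coordinates indexed by $I_j$, property (a) at step $n$ gives $\Gamma_j(\hat{\Gamma}^n(s)) = \Gamma_j(\Gamma^n(s)) = \Gamma^{n+1}_j(s)$. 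Property (b) at step $n$, applied to each $j' \in I_j$, combined with the monotonicity of $\Gamma_j$, yields $\Gamma^{n+1}_j(s) \geq \Gamma^n_j(s)$, which extends (b) to step $n+1$ and in particular implies $\Gamma^{n+1}_j(s) \geq s_j$. The max therefore collapses to $\hat{\Gamma}^{n+1}_j(s) = \Gamma^{n+1}_j(s)$, closing the induction. Setting $n = k$ and using $A_k = \{i\}$ yields the claim.

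The main obstacle I anticipate is the bookkeeping that forces (a) and (b) to be proved jointly: one cannot directly infer $\Gamma^{n+1}_j(s) \geq \Gamma^n_j(s)$ from the hypothesis $\Gamma_j(s) \geq s_j$ alone, since $\Gamma$ does not act coordinate-wise, and lifting monotonicity by one iterate requires monotonicity information at all predecessors of $j$ within the previous depth. This is precisely why the sets $A_n$ must shrink by one layer of predecessors at each step, so that after $k$ iterations only the single index $i$ remains.
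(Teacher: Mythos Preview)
Your proof is correct and follows essentially the same approach as the paper's. The only organizational difference is that the paper carries out two separate inductions---first establishing the auxiliary inequality $\Gamma^k_i(s) \geq s_i$ (which is weaker than your full monotonicity statement (b)), and then the main identity $\hat{\Gamma}^k_i(s) = \Gamma^k_i(s)$---whereas you merge them into a single joint induction tracking (a) and (b) together; both arguments rely on Proposition~\ref{prop_elem}(vi), the fact that $\Gamma_j$ depends only on the coordinates indexed by $I_j$, and the nesting $I_j \subseteq \NC^-_i(m+1)$ for $j \in \NC^-_i(m)$.
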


\begin{proof}
We prove the statement by induction over $k$ (while $s$ is fixed and $i$ is variable). First, we prove a simpler statement under the same hypothesis, namely $\Gamma^k_i(s) \geq s_i$ (instead of \eqref{eq_ghge}). For $k = 1$, this holds because $\Gamma_i(s_i) \geq s_i$ by the hypothesis. Assume that it holds for a fixed $k$. Observe that $\Gamma_i^{k+1}(s) = \Gamma_i(\Gamma^k(s))$ and this only depends on $\Gamma^k_j(s)$ with $j \in I_i$. For these $j$, we know that $\Gamma^k_j(s) \geq s_j$ by induction hypothesis, and by monotonicity $\Gamma_i^{k+1}(s) = \Gamma_i(\Gamma^k(s)) \geq \Gamma_i(s) \geq s_i$.%

Now, we prove the main statement. For $k = 1$, our assumption reads $\Gamma_i(s) \geq s_i$. This implies $\hat{\Gamma}_i(s) = \max\{s_i,\Gamma_i(s)\} = \Gamma_i(s)$. Assume that the statement is true for a fixed $k$ and consider $k+1$. By Proposition \ref{prop_elem}(vi), we have $\hat{\Gamma}^{k+1}_i(s) = \max\{s_i,\Gamma_i(\hat{\Gamma}^k(s))\}$. Now, $\Gamma_i(\hat{\Gamma}^k(s))$ only depends on $\hat{\Gamma}^k_j(s)$ for $j \in I_i$. Since we assume that $\Gamma_a(s) \geq s_a$ for all $a\in\NC^-_i(k+1)$, we also have $\Gamma_a(s) \geq s_a$ for all $a \in \NC^-_j(k)$ whenever $j \in I_i$. By the induction hypothesis, this implies $\hat{\Gamma}^k_j(s) = \Gamma^k_j(s)$ for all $j \in I_i$, and hence%
\begin{equation*}
  \hat{\Gamma}^{k+1}_i(s) = \max\{s_i,\Gamma_i(\hat{\Gamma}^k(s))\} = \max\{s_i,\Gamma_i^{k+1}(s)\} = \Gamma_i^{k+1}(s),%
\end{equation*}
where we use that $\Gamma_i^{k+1}(s) \geq s_i$, as proved previously.
\end{proof}

The next proposition shows that UGAS implies something much stronger than the small-gain condition. For later purposes, here we consider a scaled gain operator.%

\begin{proposition}\label{prop_ugas_char_part1}
Consider the following statements:%
\begin{enumerate}
\item[(a)] There exists $\omega \in \KC_{\infty}$ with $\omega \leq \id$ such that the system induced by the operator $\Gamma_{\omega} := \omega^{-1} \circ \Gamma$ is UGAS.%
\item[(b)] For all $\alpha,r > 0$, there exists $n = n(r,\alpha) \in \N$ such that for all $s \in \ell_{\infty}^+$ and $i \in \N$ the following implication holds:%
\begin{equation*}
  \|s\|_{\infty} \leq r \mbox{\quad and \quad} s_i \geq \alpha \quad \Rightarrow \quad \Gamma_j(s) < \omega(s_j) \mbox{\ for some\ } j \in \NC^-_i(n).%
\end{equation*}
\end{enumerate}
Then (a) $\Rightarrow$ (b).
\end{proposition}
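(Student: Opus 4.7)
The plan is to argue by contradiction, using the intermediate claim inside the proof of Lemma \ref{lem_hatgg} together with UGAS of $\Gamma_\omega$. The point is that $(\Gamma_\omega)_j(s) = \omega^{-1}(\Gamma_j(s))$, so the inequality $\Gamma_j(s) \geq \omega(s_j)$ is equivalent to $(\Gamma_\omega)_j(s) \geq s_j$. Hence the negation of (b) should produce, for every $n$, a state $s$ along which $\Gamma_\omega$ is componentwise non-decreasing on a sufficiently long ``backward influence cone'' of some index $i$, forcing the $i$-th component of $\Gamma_\omega^n(s)$ to remain bounded away from zero, which contradicts the $\KC\LC$-bound given by UGAS.

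More concretely, suppose (b) fails. Then there exist $\alpha, r > 0$ such that for every $n \in \N$ we can choose $s^{(n)} \in \ell_\infty^+$ and an index $i_n \in \N$ with $\|s^{(n)}\|_\infty \leq r$, $s^{(n)}_{i_n} \geq \alpha$, and $\Gamma_j(s^{(n)}) \geq \omega(s^{(n)}_j)$ for all $j \in \NC^-_{i_n}(n)$. Applying $\omega^{-1}$ (well-defined since $\omega \in \KC_\infty$) yields $(\Gamma_\omega)_j(s^{(n)}) \geq s^{(n)}_j$ for every $j \in \NC^-_{i_n}(n)$. Since $\Gamma_\omega$ is monotone (composition of monotone maps), the auxiliary induction in the proof of Lemma \ref{lem_hatgg}, applied to $\Gamma_\omega$ in place of $\Gamma$, gives
\begin{equation*}
  \Gamma_\omega^n(s^{(n)})_{i_n} \geq s^{(n)}_{i_n} \geq \alpha.
\end{equation*}

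On the other hand, by hypothesis (a), there is $\beta \in \KC\LC$ with $\|\Gamma_\omega^k(s)\|_\infty \leq \beta(\|s\|_\infty,k)$ for all $s \in \ell_\infty^+$ and $k \geq 0$. Therefore
\begin{equation*}
  \alpha \leq \Gamma_\omega^n(s^{(n)})_{i_n} \leq \|\Gamma_\omega^n(s^{(n)})\|_\infty \leq \beta(\|s^{(n)}\|_\infty,n) \leq \beta(r,n),
\end{equation*}
which is absurd once $n$ is chosen large enough that $\beta(r,n) < \alpha$ (such $n$ exists because $\beta(r,\cdot) \in \LC$). This contradiction establishes (b), with the concrete choice $n(r,\alpha) := \min\{n \in \N : \beta(r,n) < \alpha\}$.

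The only nontrivial point I expect is making sure that the induction embedded in the proof of Lemma \ref{lem_hatgg} really transfers verbatim to $\Gamma_\omega$; this is indeed the case because that induction uses only the monotonicity of $\Gamma$ and the structural fact that $\Gamma_i^{k+1}(s)$ depends exclusively on $(\Gamma_j^k(s))_{j\in I_i}$, both of which are preserved by post-composing each component with the scalar monotone map $\omega^{-1}$ (the neighbour set $I_i$ and hence the cones $\NC^-_i(n)$ are intrinsic to the underlying network, not to the specific MAFs used). Everything else is routine.
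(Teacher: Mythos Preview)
Your proof is correct and follows essentially the same route as the paper's: argue by contradiction, translate $\Gamma_j(s) \geq \omega(s_j)$ into $(\Gamma_\omega)_j(s) \geq s_j$, invoke the auxiliary induction from Lemma~\ref{lem_hatgg} (applied to $\Gamma_\omega$) to get $\Gamma_{\omega,i_n}^n(s^{(n)}) \geq \alpha$, and then contradict the $\KC\LC$-bound from UGAS. Your explicit justification of why the induction in Lemma~\ref{lem_hatgg} transfers to $\Gamma_\omega$ is a welcome addition that the paper leaves implicit.
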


\begin{proof}
Assume that (a) holds, but (b) does not. Hence, there exist $\alpha>0$ and $r > 0$ such that for all $n \in \N$ there are $s^n \in \ell_{\infty}^+$ with $\|s^n\|_{\infty} \leq r$ and $i_n \in \N$ with $s^n_{i_n} \geq \alpha$ and $\Gamma_j(s^n) \geq \omega(s_j^n)$ for all $j \in \NC^-_{i_n}(n)$. By Lemma \ref{lem_hatgg}, this implies $\Gamma_{\omega,i_n}^n(s^n) \geq s^n_{i_n} \geq \alpha$. Our assumption implies that there exist $\beta \in \KC\LC$ and $m \in \N$ with%
\begin{equation*}
  \|\Gamma_{\omega}^m(s^m)\|_{\infty} \leq \beta(\|s^m\|_{\infty},m) \leq \beta(r,m) < \alpha.%
\end{equation*}
Hence, we obtain the following contradiction:%
\begin{equation*}
  \alpha \leq s^m_{i_m} \leq \Gamma_{\omega,i_m}^m(s^m) \leq \|\Gamma_{\omega}^m(s^m)\|_{\infty} \leq \beta(\|s^m\|_{\infty},m) < \alpha.%
\end{equation*}
The proof is complete.
\end{proof}

We would like to show that also the converse implication holds in Proposition \ref{prop_ugas_char_part1}. Surprisingly, it seems that the converse only holds if we replace $\NC^-_i(n)$ in statement (b) with $\NC^+_i(n)$ and if we choose the $\omega$ in (b) larger than the one in (a). Additionally, we need the system induced by $\hat{\Gamma}$ to be UGS and some mild technical assumptions which are introduced next.%

\begin{definition}
We say that the network satisfies \emph{standard technical assumptions} if the following holds:%
\begin{enumerate}
\item[(a)] There exists $\xi \in \KC_{\infty}$ such that $\Gamma_i(s) \geq \xi \circ \gamma_{ij}(s_j)$ for all $s\in\ell_{\infty}^+$, $i \in \N$ and $j \in I_i$.%
\item[(b)] There exists $\eta \in \KC$ with $\eta < \id$ such that $\gamma_{ij} \geq \eta$ for all $i\in\N$ and $j \in I_i$.%
\end{enumerate}
\end{definition}

Observe that Assumption (a) above is satisfied for most natural choices of MAFs. Assumption (b) is satisfied for any finite network. For infinite networks, it guarantees that the non-zero couplings between subsystems cannot be arbitrarily weak.%

\begin{proposition}\label{prop_ugas_char_part2}
Let the following assumptions hold:%
\begin{enumerate}
\item[(i)] The system induced by $\hat{\Gamma}$ is UGS.%
\item[(ii)] For all $\alpha,r > 0$, there exists $n = n(r,\alpha) \in \N$ such that $\|s\|_{\infty} \leq r$ and $s_i \geq \alpha$ imply $\Gamma_j(s) < \omega(s_j)$ for some $j \in \NC^+_i(n)$.%
\item[(iii)] The network satisfies standard technical assumptions.%
\item[(iv)] The number of subsystems that are influenced by a given subsystem $\Sigma_i$ in one step of time is bounded over $i \in \N$.%
\end{enumerate}
Then the system induced by $\Gamma$ is UGAS.%
\end{proposition}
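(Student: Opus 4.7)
The approach is to decompose UGAS into UGS plus uniform global attractivity. The UGS part is essentially free: by Proposition~\ref{prop_elem}(v), $\Gamma^k(s) \leq \hat{\Gamma}^k(s)$ componentwise for all $k$ and $s$, so hypothesis (i) gives $\|\Gamma^k(s)\|_\infty \leq \varphi(\|s\|_\infty)$ with the same $\varphi \in \KC_\infty$.

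For attractivity, I would first establish componentwise convergence $\Gamma^k(s) \to 0$ via a fixed-point argument. Fix $s$ and let $\tilde{s} := \hat{Q}(s)$; this limit exists componentwise (by UGS of $\hat{\Gamma}$ together with monotonicity from Proposition~\ref{prop_elem}(iii)) and is a fixed point of $\hat{\Gamma}$, so $\Gamma(\tilde{s}) \leq \tilde{s}$ by Proposition~\ref{prop_elem}(iv). Consequently $(\Gamma^k(\tilde{s}))_k$ is componentwise monotone decreasing, converging to some $s^\infty$ which, by Lemma~\ref{lem_weak_continuity}, is a fixed point of $\Gamma$. To rule out $s^\infty \neq 0$, pick $i_0$ with $s^\infty_{i_0} > 0$ and apply hypothesis (ii) with $r = \|s^\infty\|_\infty$ and $\alpha = s^\infty_{i_0}$: one obtains $j \in \NC^+_{i_0}(n)$ with $\Gamma_j(s^\infty) < \omega(s^\infty_j)$. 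Combined with $\Gamma_j(s^\infty) = s^\infty_j$ and $\omega \leq \id$, this yields $s^\infty_j < s^\infty_j$, a contradiction. Hence $\Gamma^k(s) \leq \Gamma^k(\tilde{s}) \to 0$ componentwise for every $s$.

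The main obstacle is upgrading this componentwise convergence to $\|\cdot\|_\infty$-convergence, uniformly in $\|s\|_\infty \leq r$. By monotonicity, it suffices to show $\|t^k\|_\infty \to 0$ where $t^k := \Gamma^k(\hat{Q}(r\unit))$; the sequence $\|t^k\|_\infty$ is decreasing, hence converges to some $L \geq 0$, and the goal is $L = 0$. I would argue by contradiction: assuming $L > 0$, for each $k$ pick $i_k$ with $t^k_{i_k} > L/2$ and apply (ii) to find $j_k \in \NC^+_{i_k}(n_0)$ at graph-distance $l_k \leq n_0 - 1$ from $i_k$, satisfying $t^{k+1}_{j_k} < \omega(t^k_{j_k})$. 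Iterating standard technical assumption (a) together with the lower bound (b) on gains along a path from $i_k$ to $j_k$ yields $t^{k+l_k}_{j_k} \geq (\xi \circ \eta)^{l_k}(L/2) \geq c > 0$ for a uniform constant $c$; combined with the monotone decrease and the one-step decay at $j_k$, this forces $t^k_{j_k} > \omega^{-1}(c) > 0$ when $l_k \geq 1$, while $l_k = 0$ gives a direct strict decrease at $i_k$. Hypothesis (iv) (bounded forward fan-out) ensures that the cardinalities $|\NC^+_{i_k}(n_0)|$ and all constants appearing in the propagation bounds are uniform in $k$. The remaining combinatorial step---distilling these pointwise lower bounds, possibly via iterating (ii) to produce an expanding chain of indices carrying values bounded below by positive constants, into a clean contradiction with the componentwise limit $t^k \to 0$---is the main technical difficulty of the proof.
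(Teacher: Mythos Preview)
Your initial reductions are correct and match the paper: UGS from Corollary~\ref{cor_ugs}, replacement of $s$ by $\hat{Q}(s)$ so that $\Gamma(s) \leq s$, and componentwise attractivity via the fixed-point argument. The propagation estimate $t^{k+l}_{j} \geq (\xi\circ\eta)^l(t^k_i)$ along a path from $i$ to $j \in \NC^+_i(m)$ is also right.

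The gap is precisely the ``remaining combinatorial step'', and your suggestion of varying $i_k$ with $k$ and building an expanding chain is not the idea that closes it. The paper instead \emph{fixes} an index $i$ and exploits that the set $\NC^+_i(m)$ is finite with cardinality $\leq B_m$ uniformly in $i$ (assumption (iv)). Concretely: for $i$ with $s_i \geq \alpha$, set $K_i := \max\{k : s^k_i \geq \alpha\}$ (finite by componentwise attractivity). For \emph{every} $k \in [0,K_i-1]$ one has $s^k_i \geq \alpha$ and $\|s^k\|_\infty \leq r$, so (ii) delivers some $j(k) \in \NC^+_i(m)$ with $s^{k+1}_{j(k)} = \Gamma_{j(k)}(s^k) < \omega(s^k_{j(k)})$. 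By pigeonhole, a \emph{single} index $j_i \in \NC^+_i(m)$ is selected at least $K_i/B_m$ times; at all other times $s^{k+1}_{j_i} \leq s^k_{j_i}$ by monotone decrease. Hence
\[
  s^{K_i}_{j_i} < \omega^{\lfloor K_i/B_m \rfloor}(r).
\]
Combining this with your propagation lower bound $s^{K_i}_{j_i} \geq \min_{0\leq l < m}(\xi\circ\eta)^l(\alpha)$ forces $K_i \leq K_{\max}$ uniformly in $i$, whence $\|\Gamma^{K_{\max}+1}(s)\|_\infty < \alpha$. The point you were missing is that (ii) should be applied at a \emph{fixed} index $i$ across many time steps, so that the resulting $j$'s all lie in the same finite set and pigeonhole applies.
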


\begin{proof}
From Assumption (i), it follows by Corollary \ref{cor_ugs} that the system induced by $\Gamma$ is UGS. Hence, it suffices to prove global attractivity (see, e.g., \cite[Thm.~4.2]{mironchenko2017uniform}). To this end, we pick $s \in \ell_{\infty}^+$ and show that $\|\Gamma^n(s)\|_{\infty} \rightarrow 0$ as $n \rightarrow \infty$. As $s \leq \hat{Q}(s)$ and the latter is well-defined by Assumption (i) and Proposition \ref{prop_pointsofdecay}, it suffices to prove that $\|\Gamma^n(\hat{Q}(s))\|_{\infty} \rightarrow 0$ as $n \rightarrow \infty$. Hence, we may assume without loss of generality that $\Gamma(s) \leq s$.%

We then have to show that for every $\alpha > 0$ there exists $n \in \N$ with $\Gamma^n_i(s) < \alpha$ for all $i \in \N$. Given some $\alpha > 0$, let us define%
\begin{equation*}
  I_{\alpha} := \{ i \in \N : s_i \geq \alpha \}.%
\end{equation*}
For all $i \in \N \setminus I_{\alpha}$, we have $\Gamma^n_i(s) \leq s_i < \alpha$ for all $n \in \N$ (following from $\Gamma(s) \leq s$ and monotonicity of $\Gamma$). Hence, we only need to show that%
\begin{equation*}
  \exists n \in \N\ \forall i \in I_{\alpha}:\ \Gamma^n_i(s) < \alpha.%
\end{equation*}
From Assumption (iv), it follows that the cardinality of $\NC^+_i(n)$ is bounded with respect to $i$, say $\# \NC^+_i(n) \leq B_n$ with a number $B_n$, only depending on $n$.%

Put $s^n := \Gamma^n(s)$ and observe that $\|s_n\|_{\infty} \leq \|s\|_{\infty} =: r$ for all $n \geq 0$ because $\Gamma(s) \leq s$. By Assumption (ii), there exists $m = m(r,\alpha)$ such that $\Gamma_j(s^n) < \omega(s_j^n)$ for some $j \in \NC^+_i(m)$ whenever $s^n_i \geq \alpha$.%

For each $i \in I_{\alpha}$, define%
\begin{equation*}
  K_i := \max\{ k \in \N : s^k_i \geq \alpha \}.%
\end{equation*}
Since Assumption (ii) implies that $\Gamma$ satisfies the SGC, it follows by Proposition \ref{prop_smallgain_conds} that every $\Gamma$-trajectory converges to $0$ componentwise, and hence $K_i$ is well-defined. We assume without loss of generality that $K_i \geq m$ for each $i \in I_{\alpha}$. In fact, we can neglect all $i$ with $K_i < m$.%

Then, for each $i \in I_{\alpha}$, there must exist some $j_i \in \NC^+_i(m)$ such that%
\begin{equation*}
  \# \{ k \in [0;K_i-1] : \Gamma_{j_i}(s^k) < \omega(s^k_{j_i}) \} \geq \frac{K_i}{\# \NC^+_i(m)}.%
\end{equation*}
If this was not the case, it would result in%
\begin{equation*}
  K_i = \# \{ k \in [0;K_i-1] : \Gamma_j(s^k) < \omega(s^k_j) \mbox{\ for some } j \in \NC^+_i(m) \} < K_i,%
\end{equation*}
which is a contradiction. Since at any instance $k \in [0;K_i-1]$, we have $\Gamma_{j_i}(s^k) \leq s^k_{j_i}$, this implies%
\begin{equation*}
  \Gamma^{K_i}_{j_i}(s) < \omega^{\lfloor K_i / \# \NC^+_i(m) \rfloor}(s_{j_i}) \leq \omega^{\lfloor K_i / B_m \rfloor}(r).%
\end{equation*}
By Assumption (iii), we have $\Gamma_i(s) \geq \xi \circ \gamma_{ij}(s_j)$ for all $j \in I_i$ and some $\xi \in \KC_{\infty}$. Now, assume that $i \in I_{k_1},k_1 \in I_{k_2},\ldots,k_{l-1} \in I_{j_i}$ with $l \leq m - 1$. Then%
\begin{align*}
  \Gamma^{K_i - l + 1}_{k_1}(s) = \Gamma_{k_1}( [\Gamma^{K_i - l}_j(s)]_{j \in I_{k_1}} ) \geq \xi \circ \gamma_{k_1 i}(\Gamma^{K_i-l}_i(s)) \geq \xi \circ \gamma_{k_1 i}(\alpha).%
\end{align*}
This, in turn, implies%
\begin{align*}
  \Gamma^{K_i - l + 2}_{k_2}(s) &= \Gamma_{k_2}( [\Gamma^{K_i - l + 1}(s)]_{j \in I_{k_2}} ) \\
	&\geq \xi \circ \gamma_{k_2 k_1}(\Gamma^{K_i - l + 1}_{k_1}(s)) \geq \xi \circ \gamma_{k_2k_1} \circ \xi \circ \gamma_{k_1 i}(\alpha).%
\end{align*}
Proceeding in this manner, we end up with%
\begin{equation*}
  \omega^{\lfloor K_i / B_m \rfloor}(r) \geq \Gamma^{K_i}_{j_i}(s) \geq (\xi \circ \gamma_{j_i k_{l-1}}) \circ (\xi \circ \gamma_{k_{l-1} k_{l-2}}) \circ \cdots \circ (\xi \circ \gamma_{k_1i})(\alpha).%
\end{equation*}
Using Assumption (iii) again, we obtain%
\begin{equation*}
   \min_{0 \leq l < m}(\xi \circ \eta)^l(\alpha) \leq \omega^{\lfloor K_i / B_m \rfloor}(r).%
\end{equation*}
As the right-hand side tends to zero for $K_i \rightarrow \infty$, there exists $K_{\max}$ such that $K_i \leq K_{\max}$ for all $i \in I_{\alpha}$. This implies%
\begin{equation*}
  \Gamma^{K_{\max}+1}_i(s) < \alpha \mbox{\quad for all\ } i \in I_{\alpha}.%
\end{equation*}
Altogether, $\|\Gamma^{K_{\max}+1}(s)\|_{\infty} < \alpha$, which completes the proof.
\end{proof}

\begin{remark}
If the network is symmetric, i.e.~$\gamma_{ij} \neq 0$ if and only if $\gamma_{ji} \neq 0$, we have $\NC^-_i(n) = \NC^+_i(n)$ for all $i,n$. In this case, we almost obtain a characterization of UGAS in terms of a small-gain-like condition from the two preceding propositions.
\end{remark}

Finally, we are interested in the question whether one of the small-gain-like conditions used in the above propositions implies that $\Gamma$ satisfies the $\oplus$-MBI property. We are able to prove the following result.%
 
\begin{proposition}\label{prop_maxmbi}
Let the following assumptions hold:%
\begin{enumerate}
\item[(i)] The system induced by $\hat{\Gamma}$ is UGS.%
\item[(ii)] There exists $n \in \N$ such that for all $s \in \ell_{\infty}^+$ the inequality $s_i \geq \|s\|_{\infty}/2$ implies that $\Gamma_j(s) < s_j$ for some $j \in \NC^+_i(n)$.%
\item[(iii)] The network satisfies standard technical assumptions.%
\end{enumerate}
Then $\Gamma$ satisfies the $\oplus$-MBI property.
\end{proposition}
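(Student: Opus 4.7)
The plan is to reduce the $\oplus$-MBI estimate to a fixed-point problem for a monotone operator built from $\Gamma$. Given $s, b \in \ell_{\infty}^+$ with $s \leq \Gamma(s) \oplus b$, set $r := \|b\|_{\infty}$ and introduce $T_r(x) := \Gamma(x) \oplus r\unit$. Since $b \leq r\unit$, the hypothesis gives $s \leq T_r(s)$, so the iterates $T_r^k(s)$ form a nondecreasing sequence. A short induction combining Proposition~\ref{prop_elem}(vi) with the elementary bound $\hat{\Gamma}(x \oplus r\unit) \geq \Gamma(x) \oplus r\unit = T_r(x)$ shows that $T_r^k(s) \leq \hat{\Gamma}^k(s \oplus r\unit)$, and hypothesis~(i) (UGS of $\hat{\Gamma}$) then bounds this sequence uniformly in $k$. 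Consequently $s^* := \lim_{k\to\infty} T_r^k(s)$ exists componentwise in $\ell_{\infty}^+$, dominates $s$, and by Lemma~\ref{lem_weak_continuity} satisfies $s^* = T_r(s^*)$, i.e.\ $s^*_i = \max\{\Gamma_i(s^*), r\}$ for every $i$; in particular $s^* \geq r\unit$.

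The crux is then to control $\|s^*\|_{\infty}$ in terms of $r$ alone. If $\|s^*\|_{\infty} \leq 2r$ we are finished, so assume $\|s^*\|_{\infty} > 2r$ and pick $i_0$ with $s^*_{i_0} \geq \|s^*\|_{\infty}/2 > r$. The fixed-point identity forces $s^*_{i_0} = \Gamma_{i_0}(s^*)$, and hypothesis~(ii) supplies an index $j \in \NC^+_{i_0}(n)$ with $\Gamma_j(s^*) < s^*_j$; the identity now forces $s^*_j = r$. Let $i_0 = j_1, j_2, \ldots, j_l = j$ (with $l \leq n$) be the chain witnessing $j \in \NC^+_{i_0}(n)$, so $j_{k-1} \in I_{j_k}$ for $k = 2, \ldots, l$. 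Combining $s^*_{j_k} \geq \Gamma_{j_k}(s^*)$ with hypothesis~(iii) yields
\begin{equation*}
  s^*_{j_k} \geq \xi(\gamma_{j_k j_{k-1}}(s^*_{j_{k-1}})) \geq \xi(\eta(s^*_{j_{k-1}})) =: \chi(s^*_{j_{k-1}}),\quad k = 2, \ldots, l.
\end{equation*}
After replacing $\xi$ by $\min\{\xi, \id\} \in \KC_{\infty}$ we may assume $\chi < \id$, so $\chi^{-1} > \id$. Inverting backwards from $s^*_{j_l} = r$ gives $s^*_{j_{l-m}} \leq \chi^{-m}(r)$ by induction on $m$, and in particular $\tfrac{1}{2}\|s^*\|_{\infty} \leq s^*_{i_0} \leq \chi^{-(l-1)}(r) \leq \chi^{-(n-1)}(r)$. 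Setting $\varphi(r) := 2\chi^{-(n-1)}(r)$ then produces $\|s\|_{\infty} \leq \|s^*\|_{\infty} \leq \varphi(\|b\|_{\infty})$ as required.

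I expect the main obstacle to lie in arranging $\varphi \in \KC_{\infty}$. Hypothesis~(iii)(b) only asks for $\eta \in \KC$, not $\KC_{\infty}$, so $\chi^{n-1}$ may have bounded range and $\chi^{-(n-1)}$ may fail to be defined on all of $\R_+$. The pathological regime (where $r$ lies beyond the range of $\chi^{n-1}$) must be treated separately: one option is to combine the chain estimate for small $r$ with a uniform bound derived from hypothesis~(i) for large $r$, and patch the two regimes together to obtain a single globally defined $\KC_{\infty}$-function $\varphi$. Once this technical point is resolved, the proof structure above closes the argument.
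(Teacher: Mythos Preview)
Your argument is correct and tracks the paper's proof closely. Both proofs execute the same two-stage strategy: first replace $s$ by a larger point at which $\Gamma$ is a point of decay, then combine assumption~(ii) with the chain estimate from assumption~(iii) to bound that point in terms of $\|b\|_{\infty}$. The only structural difference is in the reduction step: the paper passes to $\hat{Q}(s)$ and shows it still satisfies $\hat{Q}(s) \leq \Gamma(\hat{Q}(s)) \oplus b$, whereas you iterate $T_r = \Gamma_r$ to land on a genuine fixed point of $\Gamma_r$. Your reduction is arguably cleaner here because the identity $s^*_i = \max\{\Gamma_i(s^*),r\}$ immediately gives both $s^*_j = r$ at the ``bad'' index and $s^*_{j_k} \geq \Gamma_{j_k}(s^*)$ along the chain, and it foreshadows the role of $\Gamma_r$ in Section~\ref{sec_path}; the paper's version avoids introducing $T_r$ but needs the extra inductive step $\hat{\Gamma}^n(s) \leq \Gamma(\hat{\Gamma}^{n-1}(s)) \oplus b$.

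Your concern in the final paragraph is legitimate but not a defect of your proof relative to the paper's: the paper simply writes $\varphi(r) := \max_{0 \leq l < n}(\xi \circ \eta)^{-l}(r)$ without addressing the possibility that $\eta \in \KC \setminus \KC_{\infty}$ leaves $(\xi\circ\eta)^{-l}$ undefined for large $r$. So the gap you flag is present in the original as well, and your proposed patch (chain estimate for small $r$, separate argument for large $r$) is the natural route if one insists on $\eta \in \KC$ rather than $\KC_{\infty}$.
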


\begin{proof}
Assume that $s \leq \Gamma(s) \oplus b$ for some $s,b \in \ell_{\infty}^+$. Inductively, this implies%
\begin{equation*}
  \hat{\Gamma}^n(s) \leq \Gamma(\hat{\Gamma}^{n-1}(s)) \oplus b \mbox{\quad for all\ } n \geq 1.%
\end{equation*}
If we look at this inequality componentwise and let $n \rightarrow \infty$, we obtain $\hat{Q}(s) \leq b \oplus \Gamma(\hat{Q}(s))$. Hence, we can without loss of generality assume that $\Gamma(s) \leq s$. Choose $n$ according to Assumption (ii). Then, for every $i \in \N$ with $s_i \geq \|s\|_{\infty}/2$, there exists $j \in \NC_i^+(n)$ with $\Gamma_j(s) < s_j$. This implies $s_j \leq b_j$. Then, there exists $l \in \{0,1,\ldots,n-1\}$ with%
\begin{equation*}
   b_j \geq s_j > \Gamma_j^l(s) \geq (\xi \circ \gamma_{jk_1})\circ(\xi \circ \gamma_{k_1k_2})\circ\cdots\circ(\xi \circ \gamma_{k_{l-1}i})(s_i).%
\end{equation*}
This implies $s_i \leq \max_{0 \leq l < n}(\xi \circ \eta)^{-l}(\|b\|_{\infty}) =: \varphi(\|b\|_{\infty})$. Hence, we obtain $\|s\|_{\infty} = \sup_{s_i \geq \|s\|_{\infty}/2} s_i \leq \varphi(\|b\|_{\infty})$, which completes the proof.
\end{proof}

\begin{remark}
Assumption (ii) in Proposition \ref{prop_maxmbi} is, for instance, satisfied if the system induced by $\Gamma$ is UGES and the network is symmetric. In this case, $\|\Gamma^n(s)\|_{\infty} \leq M \gamma^n \|s\|_{\infty}$ for some $M>0$ and $\gamma \in (0,1)$, and we can choose $n$ such that $M\gamma^n < 1/2$. For symmetric networks, the following statements thus imply each other in the sense that (1) $\Rightarrow$ (2) $\Rightarrow$ (3):%
\begin{enumerate}
\item[(1)] The system induced by $\hat{\Gamma}$ is UGS and the system induced by $\Gamma$ is UGES.%
\item[(2)] $\Gamma$ satisfies the $\oplus$-MBI property.%
\item[(3)] The system induced by $\hat{\Gamma}$ is UGS and the system induced by $\Gamma$ is globally componentwise attractive.
\end{enumerate}
\end{remark}

If the MAFs used to define $\Gamma$ are subadditive, we can show that UGES of the system induced by a slightly enlarged operator implies that the system induced by $\hat{\Gamma}$ is UGS. This is implied by the following proposition in combination with Proposition \ref{prop_smallgain_conds}. The proof is similar to that of \cite[Lem.~13]{dashkovskiy2007iss}.%

\begin{proposition}\label{prop_uges_implies_mbi}
Assume that the MAFs $\mu_i$ are subadditive, i.e.~$\mu_i(s^1 + s^2) \leq \mu_i(s^1) + \mu_i(s^2)$ for all $i \in \N$ and $s^1,s^2 \in \ell_{\infty}^+$. Further, assume that $\# I_i$ is uniformly bounded and that for some $\omega \in \KC_{\infty}$ with $\id - \omega \in \KC_{\infty}$, the system induced by $\Gamma_{\omega}$ is UGES. Then $\Gamma$ satisfies the MBI property.
\end{proposition}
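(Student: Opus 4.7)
The plan is to adapt the strategy of \cite[Lem.~13]{dashkovskiy2007iss} to the infinite-dimensional setting. Given $s, b \in \ell_{\infty}^+$ with $s \leq \Gamma(s) + b$, the goal is to produce $\xi \in \KC_{\infty}$ with $\|s\|_{\infty} \leq \xi(\|b\|_{\infty})$. First I would apply a splitting trick: since $\omega(s_i) + (\id - \omega)(s_i) = s_i \leq \Gamma_i(s) + b_i$ and both $\omega$ and $\id - \omega$ lie in $\KC_{\infty}$, at least one of $\Gamma_i(s) \geq \omega(s_i)$ or $b_i \geq (\id - \omega)(s_i)$ must hold. Rearranging gives
\begin{equation*}
  s \leq \Gamma_{\omega}(s) \oplus \tilde b, \quad \tilde b_i := (\id - \omega)^{-1}(b_i),
\end{equation*}
with $\|\tilde b\|_{\infty} \leq (\id-\omega)^{-1}(\|b\|_{\infty})$.

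Next I would exploit the subadditivity of the $\mu_i$ to derive a \emph{max-subadditivity} property of $\Gamma$:
\begin{equation*}
  \Gamma(x \oplus y) \leq \Gamma(x) + \Gamma(y) \mbox{\quad for all\ } x, y \in \ell_{\infty}^+.
\end{equation*}
This follows from $\gamma_{ij}(\max\{x_j, y_j\}) = \max\{\gamma_{ij}(x_j), \gamma_{ij}(y_j)\} \leq \gamma_{ij}(x_j) + \gamma_{ij}(y_j)$ together with subadditivity of $\mu_i$, and notably introduces no spurious multiplicative constants. Combined with the identity $\Gamma \circ \Gamma_{\omega} = \omega \circ \Gamma_{\omega}^2$ (coming from $\omega \circ \omega^{-1} = \id$), applying $\Gamma$ to the inequality from Step 1 and substituting back into $s \leq \Gamma(s) + b$ yields
\begin{equation*}
  s \leq \omega(\Gamma_{\omega}^2(s)) + \Gamma(\tilde b) + b.
\end{equation*}
A three-way splitting with splitters $\omega$, $\epsilon(\id-\omega)$, and $(1-\epsilon)(\id-\omega)$ for suitable $\epsilon \in (0,1)$ then rewrites this as $s \leq \Gamma_{\omega}^2(s) \oplus R^{(2)}$, with $R^{(2)}$ a vector built entirely from $b$ and applications of $(\id-\omega)^{-1}$ to $\Gamma(\tilde b)$ and $b$. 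Iterating inductively, one arrives at
\begin{equation*}
  s \leq \Gamma_{\omega}^n(s) \oplus R^{(n)} \mbox{\quad for all\ } n \geq 1,
\end{equation*}
and the task reduces to bounding $\|R^{(n)}\|_{\infty}$ uniformly in $n$.

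The main obstacle is controlling this recursion. Because neither $\omega^{-1}$ nor $(\id-\omega)^{-1}$ is subadditive, each splitting step introduces nonlinear multiplicative factors into $R^{(n)}$ that must be absorbed into the geometric decay $\|\Gamma_{\omega}^n(s)\|_{\infty} \leq M\alpha^n \|s\|_{\infty}$ supplied by UGES of $\Gamma_{\omega}$. The strategy is to group iterations in blocks: for a block length $k$ chosen so that $M\alpha^k$ is small relative to the growth of the splitter inverses on the relevant range, the per-block recursion takes the form $\|R^{(n+k)}\|_{\infty} \leq \max\{\Phi(\|R^{(n)}\|_{\infty}),\Phi(\|b\|_{\infty})\}$, which becomes a contraction on the tail and yields $\limsup_n \|R^{(n)}\|_{\infty} \leq \xi(\|b\|_{\infty})$ for some $\xi \in \KC_{\infty}$. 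Sending $n \to \infty$ in $s \leq \Gamma_{\omega}^n(s) \oplus R^{(n)}$ then finishes the argument. The hypothesis $\id - \omega \in \KC_{\infty}$, i.e., the strict safety margin encoded in $\Gamma_{\omega} > \Gamma$ on $\ell_{\infty}^+ \setminus \{0\}$ together with the uniform bound on $\# I_i$, is precisely what keeps the splitter inverses tame enough for this block-contraction to close.
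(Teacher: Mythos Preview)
Your initial splitting step is sound: from $s \leq \Gamma(s) + b$ you correctly deduce $s \leq \Gamma_{\omega}(s) \oplus \tilde b$ with $\tilde b = (\id-\omega)^{-1}(b)$, and the max-subadditivity $\Gamma(x \oplus y) \leq \Gamma(x) + \Gamma(y)$ is valid. The trouble is the iteration. Your one-step recursion reads, in norm,
\[
  \|R^{(n+1)}\|_{\infty} \leq \max\bigl\{(\id-\omega)^{-1}\bigl(c\,\|\Gamma(R^{(n)})\|_{\infty}\bigr),\ (\id-\omega)^{-1}(c\,\|b\|_{\infty})\bigr\},
\]
and since $\|\Gamma(R^{(n)})\|_{\infty} \leq M\alpha\,\|R^{(n)}\|_{\infty}$ with $M\alpha$ possibly large, the map $r \mapsto (\id-\omega)^{-1}(cM\alpha\, r)$ need not be a contraction. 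Your proposed fix, grouping $k$ steps to exploit $M\alpha^k$, does not actually produce a factor $M\alpha^k$: a $k$-block is the $k$-fold composition of $r \mapsto (\id-\omega)^{-1}(cM\alpha\, r)$, not $(\id-\omega)^{-1}(cM\alpha^k r)$, because each step interposes the nonlinear $(\id-\omega)^{-1}$ between successive applications of $\Gamma$. So the block recursion $\|R^{(n+k)}\|_{\infty} \leq \Phi(\|R^{(n)}\|_{\infty})$ you write down is not justified, and the closing sentence invoking $\id-\omega \in \KC_{\infty}$ and the bound on $\#I_i$ is only a hope, not an argument; in particular you never use $\#I_i \leq C$ anywhere.

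The paper's proof takes a completely different route: an index-peeling argument rather than a global iteration. UGES of $\Gamma_{\omega}$ with $M\alpha^m < 1/2$ guarantees that for each $i$ with $s_i \geq \tfrac{1}{2}\|s\|_{\infty}$ there is some $j \in \NC_i^-(m)$ with $\Gamma_j(s) < \omega(s_j)$, whence $s_j \leq (\id-\omega)^{-1}(b_j)$. One then removes all such $j$ from the index set, uses subadditivity of the $\mu_i$ to shift the corresponding (now bounded) terms into a new right-hand side $b^1$, and obtains a restricted operator $\Gamma_1$ on the remaining indices that is still UGES with the same constants. Iterating, the uniform bound on $\# I_i$ forces $\#\NC_i^-(m)$ to be uniformly bounded, so after a fixed finite number $N$ of peeling steps every index with $s_i \geq \tfrac{1}{2}\|s\|_{\infty}$ has been bounded by a $\KC_{\infty}$-function of $\|b\|_{\infty}$. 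This is where the hypothesis on $\# I_i$ is actually consumed, and it is what makes the argument terminate; your scheme lacks any analogous finiteness mechanism.
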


\begin{proof}
To avoid an overload of notation, we only carry out the proof for sum-type operators. It is clear how to generalize the proof to the setup described in the statement of the proposition. By assumption, there are $M > 0$ and $\alpha \in (0,1)$ with%
\begin{equation}\label{eq_uges}
  \|\Gamma_{\omega}^n(s)\|_{\infty} \leq M\alpha^n\|s\|_{\infty} \mbox{\quad for all\ } s \in \ell_{\infty}^+,\ n \geq 0.%
\end{equation}
Fix $m \in \N$ with $M\alpha^m < 1/2$. As before, this implies that for every $s \in \ell_{\infty}^+$ we have the implication%
\begin{equation*}
  s_i \geq \frac{1}{2}\|s\|_{\infty} \quad \Rightarrow \quad \exists j \in \NC^-_i(m) \mbox{ with } \Gamma_j(s) < \omega(s_j).%
\end{equation*}
Now, fix $s,b^0 \in \ell_{\infty}^+$ satisfying $(\id - \Gamma)(s) \leq b^0$. If $\Gamma_j(s) < \omega(s_j)$, this implies%
\begin{equation*}
  (\id - \omega)(s_j) = s_j - \omega(s_j) < s_j - \Gamma_j(s) \leq b_j^0,%
\end{equation*}
and hence $s_j \leq (\id - \omega)^{-1}(b_j^0)$. Here, we use our assumption that $\id - \omega \in \KC_{\infty}$. We put $I^1 := \{ i \in \N : \Gamma_i(s) \geq \omega(s_i) \}$. Then from%
\begin{equation*}
  s_i - \sum_{j \in I_i} \gamma_{ij}(s_j) \leq b_i^0 \mbox{\quad for all\ } i \in \N%
\end{equation*}
it follows that%
\begin{equation*}
  s_i - \sum_{j \in I_i \cap I^1} \gamma_{ij}(s_j) \leq b^0_i + \sum_{j \in I_i \setminus I^1} \gamma_{ij}(s_j) \leq b^0_i + \sum_{j \in I_i \setminus I^1} (\id - \omega)^{-1}(b^0_j) \mbox{\quad for all\ } i \in I^1.%
\end{equation*}
We define $b^1 := (b^0_i + \sum_{j \in I_i \setminus I^1} (\id - \omega)^{-1}(b^0_j))_{i\in I^1} \in \ell_{\infty}^+(I^1)$ and an operator%
\begin{equation*}
  \Gamma_1(s) := \Bigl(\sum_{j \in I_i \cap I^1} \gamma_{ij}(s_j)\Bigr)_{i \in I^1}%
\end{equation*}
which acts on $\ell_{\infty}^+(I^1)$. We claim that the system induced by $\omega^{-1} \circ \Gamma_1$ is UGES with the same constants $M,\alpha$ as in \eqref{eq_uges}. Indeed, from the construction it follows that $\Gamma_1(s|_{I^1}) \leq \Gamma(s)|_{I^1}$ for every $s \in \ell_{\infty}^+$, where $s|_{I^1} = (s_i)_{i\in I^1}$. This implies $\omega^{-1} \circ \Gamma_1(s|_{I^1}) \leq \Gamma_{\omega}(s)|_{I^1}$. Hence, $(\omega^{-1} \circ \Gamma_1)^n \leq \Gamma_{\omega}^n|_{I^1}$ for all $n$, which implies the claim. By our construction,%
\begin{equation*}
  (\id - \Gamma_1)(s|_{I^1}) \leq b^1.%
\end{equation*}
As $s|_{I^1},b^1 \in \ell_{\infty}^+(I^1)$, we can repeat our construction and obtain an index set $I^2 = \{ i \in I^1 : \Gamma_{1,i}(s) \geq \omega(s_i) \}$, a sum-type operator $\Gamma_2:\ell_{\infty}^+(I_2) \rightarrow \ell_{\infty}^+(I_2)$ and a vector $b^2 \in \ell_{\infty}^+(I_2)$ such that $(\id - \Gamma_2)(s|_{I^2}) \leq b^2$. Now, if we start with an index $i \in \N$ such that $s_i \geq (1/2)\|s\|_{\infty}$, at least one $j \in \NC^-_i(m)$ will not be contained in $I^1$. If $i \in I^1$, then again at least one $j \in I^1 \cap \NC^-_i(m)$ will not be contained in $I^2$. Since $\NC^-_i(m)$ is uniformly bounded over $i \in \N$ by our assumption that $\# I_i$ is uniformly bounded, there exists $N \in \N$ such that after at most $N$ steps of this construction, we see that $s_i$ is bounded by $(\id - \omega)^{-1}(b^k_i)$ for some $k \in \{0,1,\ldots,N\}$. By construction, $\|b^{k+1}\|_{\infty} \leq (\id + C (\id - \omega)^{-1})(\|b^k\|_{\infty})$ for each $k$, where $C$ is a bound on $\# I_i$. Hence, there exists $\varphi \in \KC_{\infty}$ with%
\begin{equation*}
  \|s\|_{\infty} = \sup_{s_i \geq \|s\|_{\infty}/2}s_i \leq \varphi(\|b\|_{\infty}),%
\end{equation*}
and this completes the proof.
\end{proof}

\begin{remark}
Since exponential stability is a robust property, it should be sufficient to assume that the system induced by $\Gamma$ is UGES in the above proposition. However, we are not aware of a result that can be directly applied here to guarantee that UGES of $\Gamma$ implies UGES of $\Gamma_{\omega}$ for some small $\omega$.
\end{remark}

Now that we have gained a better understanding of the UGAS property for gain operators, we can approach our main goal, the construction of a path of decay.%

\section{Construction of a path of decay}\label{sec_path}

Throughout this section, we assume that $\Gamma$ satisfies the $\oplus$-MBI property, i.e.~there exists $\varphi \in \KC_{\infty}$ such that%
\begin{equation*}
  s \leq \Gamma(s) \oplus b \quad \Rightarrow \quad \|s\|_{\infty} \leq \varphi(\|b\|_{\infty}) \mbox{\quad for all\ } s,b \in \ell_{\infty}^+.%
\end{equation*}
In particular, by Proposition \ref{prop_hatgamma_ugs}, this implies%
\begin{equation}\label{eq_hatgamma_ugs}
  \|\hat{\Gamma}^n(s)\|_{\infty} \leq \varphi(\|s\|_{\infty}) \mbox{\quad for all\ } s \in \ell_{\infty}^+,\ n \geq 0.%
\end{equation}
We define the first candidate for a path of decay for $\Gamma$ by%
\begin{equation}\label{eq_def_sigma}
  \sigma_*(r) := \hat{Q}(r\unit) = \bigoplus_{k=0}^{\infty}\hat{\Gamma}^k(r\unit),\quad \sigma_*:\R_+ \rightarrow \ell_{\infty}^+.%
\end{equation}
For the analysis of the mapping $\sigma_*$ and, more general, for the construction of paths of decay, it is useful to introduce for each $r > 0$ the operator%
\begin{equation*}
  P_r(s) := r\unit \oplus s,\quad P_r:\ell_{\infty}^+ \rightarrow \ell_{\infty}^+.%
\end{equation*}
The next proposition describes its properties.%

\begin{proposition}\label{prop_pr_props}
The operator $P_r$ is a projection\footnote{The projection onto a closed convex set in $\ell_{\infty}$ is, in general, not unique.} onto the closed convex set $C_r := \{ s \in \ell_{\infty}^+ : s \geq r\unit \}$, i.e.%
\begin{equation*}
  \|s - P_r(s)\|_{\infty} = \inf_{\tilde{s} \in C_r}\|s - \tilde{s}\|_{\infty}.%
\end{equation*}
Moreover, $P_r$ is a continuous monotone operator and for any $s^1,s^2 \in \ell_{\infty}^+$ with $s^1 \leq s^2$, it holds that $P_r(s^2) - P_r(s^1) \leq s^2 - s^1$. Finally, for all $s^1,s^2 \in \ell_{\infty}^+$, it holds that $\|P_r(s^2) - P_r(s^1)\|_{\infty} \leq \|s^2 - s^1\|_{\infty}$.
\end{proposition}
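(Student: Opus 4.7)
The strategy is entirely componentwise: since $P_r(s)_i = \max\{r, s_i\}$ for each $i \in \N$, and since $\|\cdot\|_{\infty}$ is the supremum of the componentwise absolute values, all four assertions reduce to elementary properties of the scalar map $t \mapsto \max\{r,t\}$ on $\R_+$. No infinite-dimensional argument is required, and the footnote about non-uniqueness of projections in $\ell_{\infty}$ plays no role, as the claim is only that the infimum is attained at $P_r(s)$, not uniquely attained.

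For the projection identity, I would first note that $P_r(s) \in C_r$ by construction, so the infimum $\inf_{\tilde{s} \in C_r}\|s - \tilde{s}\|_{\infty}$ is bounded above by $\|s - P_r(s)\|_{\infty}$. For the reverse inequality, fix an arbitrary $\tilde{s} \in C_r$ and verify the componentwise bound $|s_i - P_r(s)_i| \leq |s_i - \tilde{s}_i|$: if $s_i \geq r$, the left-hand side vanishes; if $s_i < r$, the left-hand side equals $r - s_i$, while the right-hand side is $\tilde{s}_i - s_i \geq r - s_i$ because $\tilde{s}_i \geq r$. Taking the supremum over $i$ yields $\|s - P_r(s)\|_{\infty} \leq \|s - \tilde{s}\|_{\infty}$, and the two inequalities together give the projection identity.

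Monotonicity of $P_r$ is immediate from the monotonicity of $t \mapsto \max\{r,t\}$ applied in each coordinate. Continuity and the final estimate $\|P_r(s^2) - P_r(s^1)\|_{\infty} \leq \|s^2 - s^1\|_{\infty}$ both follow at once from the $1$-Lipschitz inequality $|\max\{r,a\} - \max\{r,b\}| \leq |a-b|$ applied componentwise, followed by a supremum over $i \in \N$. For the sharper order estimate $P_r(s^2) - P_r(s^1) \leq s^2 - s^1$ under the hypothesis $s^1 \leq s^2$, a short case split does the job: if both $s^1_i$ and $s^2_i$ are $\geq r$, both sides agree at the $i$th coordinate; if both are $< r$, the left-hand side is zero; and if $s^1_i < r \leq s^2_i$, the left-hand side equals $s^2_i - r \leq s^2_i - s^1_i$. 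The fourth logical case $s^1_i \geq r > s^2_i$ is excluded by $s^1 \leq s^2$.

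I do not anticipate any genuine obstacle; the only thing to keep honest is the case analysis in the last two claims and the handling of the two directions of the projection identity. The real significance of the proposition is that it prepares $P_r$ for use in the subsequent construction, where the operator $\Gamma_r(s) = r\unit \oplus \Gamma(s)$ can be written as $P_r \circ \Gamma$, so these properties of $P_r$ will transfer cleanly to estimates on $\Gamma_r$ and its fixed points.
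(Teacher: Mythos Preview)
Your proposal is correct and follows essentially the same componentwise strategy as the paper's proof. The only cosmetic difference is that where you carry out explicit case splits on the positions of $s^1_i,s^2_i$ relative to $r$, the paper compresses these into the single inequality $\max\{r,a\} - \max\{r,b\} \leq \max\{0,a-b\}$, which encodes precisely your case analysis.
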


\begin{proof}
It is easy to see that $C_r$ is closed and convex. For any $s \in \ell_{\infty}^+$, we have%
\begin{align*}
  \|P_r(s) - s\|_{\infty} = \|r\unit \oplus s - s\|_{\infty} = \sup_{i \in \N} (\max\{r,s_i\} - s_i) = \sup_{i:\ s_i < r} (r - s_i).%
\end{align*}
For an arbitrary $\tilde{s} \in C_r$, we have%
\begin{align*}
  \|\tilde{s} - s\|_{\infty} &= \sup_{i\in\N} |\tilde{s}_i - s_i| = \max\{ \sup_{i:\ s_i < r} (\tilde{s}_i - s_i), \sup_{i:\ s_i \geq r} |\tilde{s}_i - s_i| \} \\
	&\geq \sup_{i:\ s_i < r} (\tilde{s}_i - s_i) \geq \sup_{i:\ s_i < r} (r - s_i).%
\end{align*}
This completes the proof of the first statement. The continuity and monotonicity of $P_r$ are trivial. Now, let $s^1,s^2 \in \ell_{\infty}^+$ with $s^1 \leq s^2$. Then%
\begin{equation*}
  \pi_i(P_r(s^2) - P_r(s^1)) = \max\{r,s^2_i\} - \max\{r,s^1_i\} \leq \max\{r-r,s^2_i-s^1_i\} = s^2_i - s^1_i%
\end{equation*}
for all $i \in \N$, which implies $P_r(s^2) - P_r(s^1) \leq s^2 - s^1$. Finally, consider arbitrary $s^1,s^2 \in \ell_{\infty}^+$. Then%
\begin{equation*}
  \max\{r,s^1_i\} - \max\{r,s^2_i\} \leq \max\{r-r,s^1_i - s^2_i\} \leq |s^1_i - s^2_i| \leq \|s^1 - s^2\|_{\infty}%
\end{equation*}
for all $i \in \N$. The same estimate holds with $s^1_i$ and $s^2_i$ interchanged. Hence, $\|P_r(s^1) - P_r(s^2)\|_{\infty} \leq \|s^1 - s^2\|_{\infty}$.
\end{proof}

We also define the operator $\Gamma_r := P_r \circ \Gamma:\ell_{\infty}^+ \rightarrow \ell_{\infty}^+$ for each $r > 0$, which is again a continuous monotone operator.%

By construction, $\sigma_*$ already satisfies some of the properties of a path of decay, as summarized in the following proposition.%

\begin{proposition}\label{prop_psd_elprops}
The mapping $\sigma_*$ has the following properties:%
\begin{enumerate}
\item[(i)] $\Gamma(\sigma_*(r)) \leq \sigma_*(r)$ for all $r \geq 0$.%
\item[(ii)] For all $i \in \N$ and $r \geq 0$, we have $r \leq \sigma_{*,i}(r) \leq \varphi(r)$.
\item[(iii)] Each $\sigma_{*,i}$ satisfies $\sigma_{*,i}(0) = 0$, $\sigma_{*,i}(r) > 0$ for all $r>0$, and $\sigma_{*,i}(r) \rightarrow \infty$ as $r \rightarrow \infty$.%
\item[(iv)] Each $\sigma_{*,i}$ is a lower semicontinuous and monotonically non-decreasing function.%
\item[(v)] $\sigma_*$ is continuous at $r=0$.%
\item[(vi)] The point $\sigma_*(r)$ is a fixed point of $\Gamma_r$ for each $r \geq 0$.
\end{enumerate}
\end{proposition}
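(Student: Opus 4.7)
The plan is to verify each of the six properties in turn, with most following almost immediately from the construction and from facts already assembled in Proposition \ref{prop_elem} together with the standing assumption \eqref{eq_hatgamma_ugs}.

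First I would establish (i) and (vi) together, since they are the most substantive. By Proposition \ref{prop_elem}(iii) the trajectory $(\hat{\Gamma}^k(r\unit))_{k\geq 0}$ is componentwise monotonically non-decreasing, and by \eqref{eq_hatgamma_ugs} it is uniformly norm-bounded by $\varphi(r)$; hence the componentwise supremum $\sigma_*(r) = \hat{Q}(r\unit)$ is a well-defined element of $\ell_\infty^+$, and Lemma \ref{lem_weak_continuity} identifies it as a fixed point of $\hat{\Gamma}$. Proposition \ref{prop_elem}(iv) then yields (i). For (vi), one inequality follows from (i) together with the lower bound $\sigma_*(r)\geq r\unit$ established below, giving $\Gamma_r(\sigma_*(r)) = r\unit \oplus \Gamma(\sigma_*(r)) \leq \sigma_*(r)$. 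The reverse inequality uses Proposition \ref{prop_elem}(vi): for every $n\geq 0$,
\begin{equation*}
  \hat{\Gamma}^{n+1}(r\unit) \;=\; r\unit \oplus \Gamma(\hat{\Gamma}^n(r\unit)) \;\leq\; r\unit \oplus \Gamma(\sigma_*(r)) \;=\; \Gamma_r(\sigma_*(r)),
\end{equation*}
and since $\hat{\Gamma}^0(r\unit) = r\unit \leq \Gamma_r(\sigma_*(r))$ too, taking the supremum over $k$ gives $\sigma_*(r) \leq \Gamma_r(\sigma_*(r))$.

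Next I would dispatch (ii), which underpins (iii) and (v). The lower bound $\sigma_{*,i}(r) \geq r$ comes from the $k=0$ term $\hat{\Gamma}^0(r\unit) = r\unit$ in the supremum defining $\sigma_*(r)$. The upper bound $\sigma_{*,i}(r) \leq \varphi(r)$ is read off directly from \eqref{eq_hatgamma_ugs} applied at $s=r\unit$. Property (iii) is then immediate: $\sigma_{*,i}(0)=0$ by the sandwich $0 \leq \sigma_{*,i}(0) \leq \varphi(0) = 0$, positivity for $r>0$ follows from $\sigma_{*,i}(r)\geq r > 0$, and unboundedness follows from the same lower bound as $r\to\infty$. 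Property (v) likewise follows from $\|\sigma_*(r)\|_\infty \leq \varphi(r) \to 0$ as $r\to 0^+$, which in fact gives continuity at $r=0$ with respect to the $\ell_\infty$-norm, not merely componentwise.

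Finally, for (iv), I would observe that for each fixed $k$ the map $r \mapsto \pi_i \circ \hat{\Gamma}^k(r\unit)$ is continuous on $\R_+$ because $\hat{\Gamma}^k$ is continuous by Proposition \ref{prop_elem}(i), and it is monotonically non-decreasing in $r$ since $r_1 \leq r_2$ implies $r_1\unit \leq r_2\unit$ and $\hat{\Gamma}^k$ is monotone by Proposition \ref{prop_elem}(ii). The pointwise supremum of a family of continuous monotonically non-decreasing functions is monotonically non-decreasing and lower semicontinuous, so $\sigma_{*,i}$ has the stated regularity. I do not anticipate a real obstacle here; the only slightly delicate point is keeping the two inequalities in (vi) straight, and that is handled transparently by Proposition \ref{prop_elem}(vi) as indicated above.
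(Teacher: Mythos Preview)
Your proof is correct and follows essentially the same approach as the paper for items (i)--(v). For (vi) you deviate slightly: the paper passes to the componentwise limit in the identity $\hat{\Gamma}^n(r\unit) = r\unit \oplus \Gamma(\hat{\Gamma}^{n-1}(r\unit))$ via Lemma~\ref{lem_weak_continuity}, whereas you instead bound each term of the supremum by $\Gamma_r(\sigma_*(r))$ using monotonicity and combine with the reverse inequality from (i) and (ii)---an equally valid and arguably more elementary route that avoids the continuity lemma.
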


\begin{proof}
(i) This follows immediately from the construction (as $\hat{Q}(r\unit)$ is a fixed point of $\hat{\Gamma}$).%

(ii) The statement holds, because $r\unit \leq \hat{Q}(r\unit) = \bigoplus_{k=0}^{\infty}\hat{\Gamma}^k(r\unit) \leq \varphi(r)\unit$.%

(iii) This directly follows from (ii).%

(iv) We can write $\sigma_{*,i}(r) = \sup_{k\geq0}\hat{\Gamma}^k_i(r\unit)$. As a supremum over continuous functions, $\sigma_{*,i}$ is lower semicontinuous. If $r_1 \leq r_2$, then%
\begin{equation*}
  \sigma_{*,i}(r_1) = \sup_{k\geq0}\hat{\Gamma}^k_i(r_1\unit) \leq \sup_{k\geq0}\hat{\Gamma}^k_i(r_2\unit) = \sigma_{*,i}(r_2),%
\end{equation*}
where we use the fact that $\hat{\Gamma}^k$ is a monotone operator for each $k \geq 0$.

(v) This follows from%
\begin{align*}
  \|\sigma_*(r) - \sigma_*(0)\|_{\infty} = \|\sigma_*(r)\|_{\infty} = \sup_{i\in\N}\sigma_{*,i}(r) \leq \varphi(r).
\end{align*}

(vi) By Proposition \ref{prop_elem}(iv), we have $\hat{\Gamma}^n(r\unit) = r\unit \oplus \Gamma(\hat{\Gamma}^{n-1}(r\unit))$ for each $n \in \N$. By Lemma \ref{lem_weak_continuity}, the componentwise convergence $\hat{\Gamma}^n(r\unit) \rightarrow \sigma_*(r)$ implies that $\Gamma(\hat{\Gamma}^{n-1}(r\unit))$ converges componentwise to $\Gamma(\sigma_*(r))$. Hence,%
\begin{equation*}
  \sigma_{*,i}(r) = \lim_{n \rightarrow \infty} \hat{\Gamma}^n_i(r\unit) = \max\{r,\lim_{n \rightarrow \infty} \Gamma_i(\hat{\Gamma}^{n-1}(r\unit)) \} = \max\{r,\Gamma_i(\sigma_*(r)) \}%
\end{equation*}
for each $i \in \N$. This is equivalent to $\Gamma_r(\sigma_*(r)) = \sigma_*(r)$.
\end{proof}

We now introduce the second candidate for a path of decay by%
\begin{equation*}
  \sigma^*_i(r) := \inf_{n \in \Z_+}\Gamma_{r,i}^n(\varphi(r)\unit)%
\end{equation*}
for all $i \in \N$ and $r \geq 0$. Although the definition of $\sigma^*$ looks different than that of $\sigma_*$, it is quite similar, since we can write $\sigma_{*,i}(r) = \sup_{n \in \Z_+}\Gamma_{r,i}^n(r\unit)$ for each $i \in \N$. The next lemma is crucial.%

\begin{lemma}
The point $\sigma^*(r)$ is a fixed point of $\Gamma_r$ for each $r \geq 0$.
\end{lemma}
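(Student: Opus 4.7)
The plan is to mirror the proof of Proposition~\ref{prop_psd_elprops}(vi) by showing that the iterates $a^n := \Gamma_r^n(\varphi(r)\unit)$ form a componentwise non-increasing sequence, so that the componentwise infimum defining $\sigma^*(r)$ coincides with the componentwise limit, and then to pass to the limit in the recursion $a^{n+1} = r\unit \oplus \Gamma(a^n)$ using the componentwise continuity of $\Gamma$ provided by Lemma~\ref{lem_weak_continuity}. This is the decreasing counterpart of the increasing monotone convergence argument already used for $\sigma_*$.

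First I would record that the orbit is trapped in a fixed order interval. By Proposition~\ref{prop_psd_elprops}(ii) we have $\sigma_*(r) \leq \varphi(r)\unit$; combined with Proposition~\ref{prop_psd_elprops}(vi) and monotonicity of $\Gamma_r$, this yields $a^n \geq \Gamma_r^n(\sigma_*(r)) = \sigma_*(r)$ for every $n \geq 0$, so $\sigma^*(r) \in \ell_{\infty}^+$ and $\sigma^*(r) \geq \sigma_*(r) \geq r\unit$. Trivially $\sigma^*(r) \leq a^0 = \varphi(r)\unit$. The central step is the first descent $a^1 \leq a^0$ which, since $r \leq \varphi(r)$, reduces to $\Gamma(\varphi(r)\unit) \leq \varphi(r)\unit$. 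Once this is in hand, monotonicity of $\Gamma_r$ propagates $a^{n+1} \leq a^n$ to all $n$, giving a componentwise non-increasing, bounded-below sequence whose componentwise limit must equal $\sigma^*(r)$; Lemma~\ref{lem_weak_continuity} applied to $a^{n+1} = r\unit \oplus \Gamma(a^n)$ then yields $\sigma^*(r) = r\unit \oplus \Gamma(\sigma^*(r)) = \Gamma_r(\sigma^*(r))$.

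The hard part will be establishing $\Gamma(\varphi(r)\unit) \leq \varphi(r)\unit$: the $\oplus$-MBI property provides only the conditional bound $\|s\|_{\infty} \leq \varphi(\|b\|_{\infty})$ whenever $s \leq \Gamma(s) \oplus b$, and turning this into the unconditional inequality is not automatic. A natural workaround, should a direct argument prove elusive, is to use $\sigma_*(\varphi(r))$ as a comparison orbit: by Proposition~\ref{prop_psd_elprops}(vi) applied with $\varphi(r)$ in place of $r$, $\sigma_*(\varphi(r))$ is a fixed point of $\Gamma_{\varphi(r)}$, hence $\Gamma_r(\sigma_*(\varphi(r))) \leq \sigma_*(\varphi(r))$, and the iterates starting from $\sigma_*(\varphi(r)) \geq \varphi(r)\unit$ are manifestly non-increasing and converge componentwise to a fixed point $\tilde{s}$ of $\Gamma_r$. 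A squeeze argument based on the lower bound $\sigma_*(r)$ and monotonicity should then identify $\sigma^*(r)$ with $\tilde{s}$, closing the proof.
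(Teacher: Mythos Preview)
Your workaround is exactly the route the paper takes: start the decreasing iteration from $\sigma_*(\varphi(r))$ rather than from $\varphi(r)\unit$, obtain a componentwise limit $\tilde s\in\Fix(\Gamma_r)$, and then identify $\tilde s$ with $\sigma^*(r)$. You are also right that the direct first step $\Gamma(\varphi(r)\unit)\leq\varphi(r)\unit$ is not available from the $\oplus$-MBI property alone; the paper does not attempt it.

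One point needs sharpening. In the squeeze, the lower bound $\sigma_*(r)$ is not the relevant ingredient. What the paper uses is the \emph{upper} bound $\tilde s\leq\varphi(r)\unit$: since $\tilde s$ is a fixed point of $\Gamma_r$ one has $\tilde s\leq\Gamma(\tilde s)\oplus r\unit$, and the $\oplus$-MBI property gives $\|\tilde s\|_\infty\leq\varphi(r)$. From $\tilde s\leq\varphi(r)\unit$ and $\tilde s=\Gamma_r^n(\tilde s)$ one gets $\tilde s\leq\Gamma_r^n(\varphi(r)\unit)$ for every $n$, hence $\tilde s\leq\sigma^*(r)$; the reverse inequality $\sigma^*(r)\leq\tilde s$ follows from $\varphi(r)\unit\leq\sigma_*(\varphi(r))$ and monotonicity. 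With this correction your argument is complete and coincides with the paper's.
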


\begin{proof}
For $r = 0$, this is trivial. Now, fix $r > 0$ and consider the sequence $s^n := \Gamma_r^n(\sigma_*(\varphi(r)))$, $n \in \Z_+$. We have%
\begin{equation*}
  \Gamma_r(\sigma_*(\varphi(r))) = r\unit \oplus \Gamma(\sigma_*(\varphi(r))) \leq r\unit \oplus \sigma_*(\varphi(r)) = \sigma_*(\varphi(r)),%
\end{equation*}
where the last equality follows from $\sigma_*(\varphi(r)) \geq \varphi(r)\unit \geq r\unit$. By monotonicity of $\Gamma_r$, it follows that the sequence $(s^n)_{n\in\Z_+}$ is componentwise non-increasing. Hence, it converges componentwise to a fixed point $s^*$ of $\Gamma_r$ (using similar arguments as in the proof of Lemma \ref{lem_weak_continuity}). We claim that $s^* = \sigma^*(r)$. Clearly, we have $\Gamma_r^n(\varphi(r)\unit) \leq s^n$ for all $n \in \Z_+$. If $s$ is any fixed point of $\Gamma_r$, then the $\oplus$-MBI property implies $s \leq \varphi(r)\unit$. Hence, $s^* \leq \varphi(r)\unit$, implying $s^* = \Gamma_r^n(s^*) \leq \Gamma_r^n(\varphi(r)\unit) \leq s^n$ for all $n \in \Z_+$. It follows that the componentwise infimum of $\Gamma_r^n(\varphi(r)\unit)$ equals $s^*$, i.e.~$\sigma^*(r) = s^*$ as claimed. 
\end{proof}

The fundamental properties of the mapping $\sigma^*$ can be proved analogously as those of $\sigma_*$ and are summarized in the next proposition.%

\begin{proposition}
The mapping $\sigma^*$ has the following properties:%
\begin{enumerate}
\item[(i)] $\Gamma(\sigma^*(r)) \leq \sigma^*(r)$ for all $r \geq 0$.%
\item[(ii)] For all $i \in \N$ and $r \geq 0$, we have $r \leq \sigma^*_i(r) \leq \varphi(r)$.
\item[(iii)] Each $\sigma^*_i$ satisfies $\sigma^*_i(0) = 0$, $\sigma^*_i(r) > 0$ for all $r>0$, and $\sigma^*_i(r) \rightarrow \infty$ as $r \rightarrow \infty$.%
\item[(iv)] Each $\sigma^*_i$ is an upper semicontinuous and monotonically non-decreasing function.%
\item[(v)] $\sigma^*$ is continuous at $r=0$.%
\end{enumerate}
\end{proposition}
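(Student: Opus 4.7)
The plan is to mimic the proof of Proposition \ref{prop_psd_elprops} using the fixed-point identity $\sigma^*(r) = \Gamma_r(\sigma^*(r)) = r\unit \oplus \Gamma(\sigma^*(r))$ established in the preceding lemma, the $\oplus$-MBI property, and the continuity of $\Gamma$ coming from Assumption \ref{ass_standing}.

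For (i), the identity $\sigma^*(r) = r\unit \oplus \Gamma(\sigma^*(r))$ immediately yields $\sigma^*(r) \geq \Gamma(\sigma^*(r))$. For (ii), the same identity gives $\sigma^*(r) \geq r\unit$ (hence $\sigma^*_i(r) \geq r$), while $\sigma^*(r) \leq \Gamma(\sigma^*(r)) \oplus r\unit$ combined with the $\oplus$-MBI property (applied with $b = r\unit$) yields $\|\sigma^*(r)\|_\infty \leq \varphi(r)$. Statement (iii) is a direct consequence of the sandwich $r \leq \sigma^*_i(r) \leq \varphi(r)$: setting $r=0$ forces $\sigma^*_i(0)=0$, while positivity and coercivity of $r$ carry over. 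Statement (v) likewise follows from $\|\sigma^*(r) - \sigma^*(0)\|_\infty = \|\sigma^*(r)\|_\infty \leq \varphi(r) \to 0$ as $r \to 0^+$, using $\varphi \in \KC_\infty$.

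For (iv), monotonicity is established by induction on $n$: if $r_1 \leq r_2$, then $\varphi(r_1)\unit \leq \varphi(r_2)\unit$, and assuming $\Gamma_{r_1}^n(\varphi(r_1)\unit) \leq \Gamma_{r_2}^n(\varphi(r_2)\unit)$, monotonicity of $\Gamma$ together with $r_1 \leq r_2$ yields the corresponding inequality for $n+1$; taking the componentwise infimum over $n$ gives $\sigma^*(r_1) \leq \sigma^*(r_2)$. For upper semicontinuity, I would show by induction that each mapping $g_n: r \mapsto \Gamma_r^n(\varphi(r)\unit)$ is continuous as a map $\R_+ \to \ell_\infty^+$: the base case $g_0(r) = \varphi(r)\unit$ uses continuity of $\varphi$, and the inductive step writes $g_{n+1}(r) = r\unit \oplus \Gamma(g_n(r))$ and invokes continuity of $\Gamma$ on $\ell_\infty^+$ (Assumption \ref{ass_standing}) together with the joint continuity of $(r,s) \mapsto r\unit \oplus s$, which follows from Proposition \ref{prop_pr_props} (since $P_r$ is $1$-Lipschitz and $r \mapsto P_r(s)$ is $1$-Lipschitz for fixed $s$). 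Projecting to the $i$th coordinate, each $r \mapsto \pi_i(g_n(r))$ is continuous, so $\sigma^*_i(r) = \inf_{n\in\Z_+} \pi_i(g_n(r))$ is upper semicontinuous as a pointwise infimum of continuous functions.

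The main obstacle is the upper semicontinuity argument in (iv): one must carefully track how $r$ enters both through the projection $P_r$ inside $\Gamma_r$ and through the initial condition $\varphi(r)\unit$, and propagate norm continuity through finitely many iterates using that $\Gamma$ is continuous as an $\ell_\infty^+$-valued operator. All remaining items reduce either to the fixed-point identity for $\sigma^*(r)$ or to elementary bounds and are essentially parallel to the corresponding parts of Proposition \ref{prop_psd_elprops}.
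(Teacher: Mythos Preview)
Your proposal is correct and follows essentially the same route the paper indicates: the paper does not write out a proof but says the properties ``can be proved analogously'' to Proposition~\ref{prop_psd_elprops}, and your argument is precisely the natural analogue---using the fixed-point identity $\sigma^*(r)=\Gamma_r(\sigma^*(r))$ for (i) and (ii), the sandwich $r\leq\sigma^*_i(r)\leq\varphi(r)$ for (iii) and (v), and writing $\sigma^*_i$ as a pointwise infimum of continuous functions for the upper semicontinuity in (iv). Your care in establishing continuity of $r\mapsto\Gamma_r^n(\varphi(r)\unit)$ by induction (tracking $r$ both through $P_r$ and through the initial point $\varphi(r)\unit$) is exactly the right way to justify the step the paper leaves implicit.
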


The following proposition yields some information about the fixed point set $\Fix(\Gamma_r)$ of the operator $\Gamma_r$.%

\begin{proposition}
For each $r > 0$, the points $\sigma_*(r)$ and $\sigma^*(r)$ are the minimal and the maximal fixed point of $\Gamma_r$, respectively. That is, any fixed point $s$ satisfies $\sigma_*(r) \leq s \leq \sigma^*(r)$. In particular, the order interval $[\sigma_*(r),\sigma^*(r)] := \{ s \in \ell_{\infty}^+ : \sigma_*(r) \leq s \leq \sigma^*(r) \}$ is an invariant set for $\Gamma_r$ and contains all of its fixed points. 
\end{proposition}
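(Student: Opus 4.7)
The plan is to prove the two extremal-fixed-point claims by sandwiching an arbitrary fixed point $s$ of $\Gamma_r$ between the two iterative constructions, and then deduce the invariance of the order interval from monotonicity of $\Gamma_r$.

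For the minimality of $\sigma_*(r)$, I would fix $r > 0$ and an arbitrary $s \in \Fix(\Gamma_r)$. Since $s = \Gamma_r(s) = r\unit \oplus \Gamma(s)$, we immediately have $s \geq r\unit$. The plan is then to show by induction on $n$ that $\hat{\Gamma}^n(r\unit) \leq s$. The base $n=0$ is just $r\unit \leq s$. For the inductive step, I would invoke Proposition \ref{prop_elem}(vi) to rewrite $\hat{\Gamma}^{n+1}(r\unit) = r\unit \oplus \Gamma(\hat{\Gamma}^n(r\unit))$ and use monotonicity of $\Gamma$ together with the induction hypothesis to get $\hat{\Gamma}^{n+1}(r\unit) \leq r\unit \oplus \Gamma(s) = \Gamma_r(s) = s$. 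Taking the componentwise supremum over $n$ yields $\sigma_*(r) \leq s$.

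For the maximality of $\sigma^*(r)$, I would again take an arbitrary $s \in \Fix(\Gamma_r)$. The relation $s = \Gamma_r(s) = r\unit \oplus \Gamma(s)$ implies $s \leq \Gamma(s) \oplus r\unit$, so by the standing $\oplus$-MBI property $\|s\|_{\infty} \leq \varphi(r)$ and hence $s \leq \varphi(r)\unit$. Now induct on $n$ to show $\Gamma_r^n(\varphi(r)\unit) \geq s$: the base case is what we just noted, and monotonicity of $\Gamma_r$ together with $s = \Gamma_r(s)$ supplies the inductive step. Taking the componentwise infimum over $n$ gives $\sigma^*(r) \geq s$.

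For the final clause about the order interval $[\sigma_*(r),\sigma^*(r)]$, the containment of every fixed point is immediate from the two bounds already established. Invariance under $\Gamma_r$ follows directly from monotonicity: if $\sigma_*(r) \leq s \leq \sigma^*(r)$, then applying $\Gamma_r$ and using that both endpoints are fixed points yields $\sigma_*(r) = \Gamma_r(\sigma_*(r)) \leq \Gamma_r(s) \leq \Gamma_r(\sigma^*(r)) = \sigma^*(r)$. I do not anticipate a real obstacle here; the only subtlety is making sure to invoke the $\oplus$-MBI property (rather than just monotonicity) in the bound $s \leq \varphi(r)\unit$, since without that bound one could not start the downward iteration from $\varphi(r)\unit$ and conclude it dominates $s$.
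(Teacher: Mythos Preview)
Your proof is correct and essentially identical to the paper's. The only cosmetic difference is that you phrase the lower iteration as $\hat{\Gamma}^n(r\unit)$ and invoke Proposition~\ref{prop_elem}(vi), whereas the paper writes it as $\Gamma_r^n(r\unit)$ and uses monotonicity of $\Gamma_r$ directly; since $\Gamma_r^n(r\unit) = \hat{\Gamma}^n(r\unit)$ (as the paper itself remarks just before introducing $\sigma^*$), the two arguments are the same.
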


\begin{proof}
Let $s$ be an arbitrary fixed point of $\Gamma_r$. Then $s = \Gamma_r(s) \geq r\unit$, implying $s = \Gamma_r^n(s) \geq \Gamma_r^n(r\unit)$ for all $n \in \Z_+$. In the limit, we obtain that $s \geq \sigma_*(r)$. Moreover, since any fixed point $s$ must satisfy $s \leq \varphi(r)\unit$, we have $s = \Gamma_r^n(s) \leq \Gamma_r^n(\varphi(r)\unit)$ for all $n \in \Z_+$, implying that $\sigma^*(r)$ is the maximal fixed point of $\Gamma_r$. It is now easy to see that $[\sigma_*(r),\sigma^*(r)]$ is invariant under $\Gamma$.
\end{proof}

We now present our main result about the existence of a path of decay for $\Gamma$.%

\begin{theorem}\label{thm_path_of_decay}
Let $\Gamma$ be a well-defined and continuous gain operator satisfying the following assumptions:%
\begin{enumerate}
\item[(i)] The family $\{\gamma_{ij} : i,j \in \N\}$ is pointwise equicontinuous.%
\item[(ii)] $\Gamma$ satisfies the $\oplus$-MBI property.%
\item[(iii)] There exists $\omega \in \KC_{\infty}$ with $\omega < \id$ such that the system induced by $\Gamma_{\omega} = \omega^{-1} \circ \Gamma$ is UGAS.%
\item[(iv)] For all $0 < R_1 < R_2$, there exists $l>0$ such that the MAFs $\mu_i$ satisfy%
\begin{equation*}
  (\mu_i(s^2) - \mu_i(s^1)) \geq l(s^2_j - s^1_j),%
\end{equation*}
whenever $j \in I_i$ and $R_1\unit \leq s^1 \leq s^2 \leq R_2\unit$.%
\item[(v)] For every compact interval $J \subset (0,\infty)$, there exists $c > 0$ such that%
\begin{equation*}
  |\gamma_{ij}(r_1) - \gamma_{ij}(r_2)| \geq c|r_1 - r_2|%
\end{equation*}
for all $i \in \N$, $j\in I_i$, and $r_1,r_2 \in J$.%
\end{enumerate}
Then every single-valued selection of the set-valued mapping $r \mapsto \Fix(\Gamma_r)$ is a path of decay for $\Gamma$ provided that it is locally Lipschitz continuous on $(0,\infty)$.
\end{theorem}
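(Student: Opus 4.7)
The plan is to verify the four defining properties of a path of decay for the given locally Lipschitz selection $\sigma$, using the fixed-point identity $\sigma(r) = r\unit \oplus \Gamma(\sigma(r))$ throughout.

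First I would dispatch the properties that fall out directly. The identity gives $\Gamma(\sigma(r)) \leq \sigma(r)$ (property (P1) with $\rho = 0$) and $\sigma(r) \geq r\unit$. The $\oplus$-MBI property applied to $\sigma(r) \leq \Gamma(\sigma(r)) \oplus r\unit$ gives $\|\sigma(r)\|_{\infty} \leq \varphi(r)$, so (P2) holds with $\sigma_{\min}(r)=r$ and $\sigma_{\max}(r) = \varphi(r)$. Since $\omega < \id$ forces $\Gamma_\omega \geq \Gamma$ componentwise, UGAS of $\Gamma_\omega$ forbids nonzero fixed points of $\Gamma$, yielding $\sigma(0) = 0$; the remaining continuity and unboundedness parts of (P3), as well as the upper Lipschitz bound in (P4), are immediate from local Lipschitz continuity of $\sigma$ together with the bounds on $\|\sigma(r)\|_{\infty}$.

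The core of the proof is the uniform lower Lipschitz estimate in (P4), from which strict monotonicity of each $\sigma_i$ -- and hence the remaining $\KC_{\infty}$ property in (P3) -- follows as a byproduct (a continuous injection $\sigma_i$ with $\sigma_i(0)=0$ and $\sigma_i(r) \geq r$ must be strictly increasing). For $r_1 < r_2$ in a compact interval $J \subset (0,\infty)$ and any index $i$, I would split into the trivial case $\sigma_i(r_1) = r_1$, which gives $\sigma_i(r_2) - \sigma_i(r_1) \geq r_2 - r_1$ directly, and the nontrivial case $\sigma_i(r_1) = \Gamma_i(\sigma(r_1)) > r_1$. Before handling the nontrivial case, I would establish componentwise monotonicity $\sigma(r_1) \leq \sigma(r_2)$ via a monotone-iteration argument: $\Gamma_{r_2}^n(\sigma(r_1))$ is non-decreasing in $n$ and bounded by $\varphi(r_2)\unit$, hence converges componentwise to a fixed point $\tilde{\sigma}$ of $\Gamma_{r_2}$ with $\tilde{\sigma} \geq \sigma(r_1)$; local Lipschitz continuity of $\sigma$ must then be used to force $\tilde{\sigma} = \sigma(r_2)$. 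Once monotonicity is in hand, all arguments of $\mu_i$ and $\gamma_{ij}$ lie in a common compact subinterval of $(0,\infty)$ by (i) and (P2), so assumptions (iv) and (v) combine to yield
\begin{equation*}
  \sigma_i(r_2) - \sigma_i(r_1) \geq \Gamma_i(\sigma(r_2)) - \Gamma_i(\sigma(r_1)) \geq lc \bigl( \sigma_j(r_2) - \sigma_j(r_1) \bigr) \mbox{\quad for every\ } j \in I_i,
\end{equation*}
where the first inequality uses $\sigma_i(r_2) \geq \Gamma_i(\sigma(r_2))$ and $\sigma_i(r_1) = \Gamma_i(\sigma(r_1))$.

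From here I would propagate the estimate along a reverse chain $i = k_m, k_{m-1}, \ldots, k_1$ with $k_l \in I_{k_{l+1}}$, losing a factor of $lc$ at each step until a trivial-case index is reached. Proposition \ref{prop_ugas_char_part1}, applied with $\alpha = \min J$ and $r = \varphi(\max J)$, guarantees that for any starting $i$ there exists $j \in \NC^-_i(n)$ with $\Gamma_j(\sigma(r_1)) < \omega(\sigma_j(r_1))$, which forces $\sigma_j(r_1) = r_1$ (the trivial case). Choosing the chain to terminate at this $j$ within $m \leq n = n(J)$ steps yields the uniform lower bound $\sigma_i(r_2) - \sigma_i(r_1) \geq (lc)^{n-1}(r_2 - r_1)$. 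The main obstacle will be twofold: pinning down $\tilde{\sigma} = \sigma(r_2)$ in the monotonicity step despite $\Fix(\Gamma_{r_2})$ possibly being non-trivial (this is where the local Lipschitz modulus of $\sigma$ must be reconciled with the attracting structure provided by UGAS), and organizing the chain so that at each link the specific neighbor furnished by the predecessor structure of Proposition \ref{prop_ugas_char_part1} is used in assumption (iv). Uniformity of the constants $l$ and $c$ over $i$ is secured by the pointwise equicontinuity from (i) keeping all relevant arguments in a common compact set.
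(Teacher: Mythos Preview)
Your overall strategy matches the paper's: dispatch (P1), (P2) and the upper half of (P4) from the fixed-point identity, local Lipschitz continuity, and the $\oplus$-MBI bound, then concentrate on the uniform lower Lipschitz estimate via Proposition~\ref{prop_ugas_char_part1} and propagation along backward chains using assumptions (iv) and (v). Two points deserve comment.

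First, the paper organizes the anchor step differently. Rather than applying Proposition~\ref{prop_ugas_char_part1} at a single $r_1$, it fixes $r_0>0$ and proves a claim: there exist $m\in\N$ and $\rho>0$ such that for every $i$ one can find $j\in\NC^-_i(m)$ with $\sigma_j(r)=r$ on the \emph{entire interval} $[r_0-\rho,r_0+\rho]$. This is obtained by combining $\Gamma_j(\sigma(r_0))<\omega(\sigma_j(r_0))$ with a uniform gap $\epsilon>0$ (from $\omega<\id$ on $[r_0,\varphi(r_0)]$) and the continuity of both $\sigma$ and $\Gamma$ to propagate the strict inequality to a neighborhood. Your version only pins down $\sigma_j(r_1)=r_1$ at a point and then uses $\sigma_j(r_2)\geq r_2$ to get $\sigma_j(r_2)-\sigma_j(r_1)\geq r_2-r_1$; this is equally sufficient for the anchor, so the difference is stylistic.

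Second, your monotonicity argument has a genuine gap. To apply assumption (iv) at each link of the chain you need $[\gamma_{ik}(\sigma_k(r_1))]_{k\in I_i}\leq[\gamma_{ik}(\sigma_k(r_2))]_{k\in I_i}$, i.e.\ $\sigma_k(r_1)\leq\sigma_k(r_2)$ for all $k\in I_i$, not just for the chosen neighbor. You propose to obtain $\sigma(r_1)\leq\sigma(r_2)$ from the componentwise limit $\tilde\sigma=\lim_n\Gamma_{r_2}^n(\sigma(r_1))\in\Fix(\Gamma_{r_2})$, then invoke ``local Lipschitz continuity of $\sigma$'' to force $\tilde\sigma=\sigma(r_2)$. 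But $\Gamma_{r_2}$ may have multiple fixed points, and nothing in the Lipschitz modulus of $\sigma$ singles out which one $\sigma(r_2)$ is; all you know is $\sigma_*(r_2)\leq\tilde\sigma$ and $\sigma_*(r_2)\leq\sigma(r_2)\leq\sigma^*(r_2)$, which does not give $\tilde\sigma=\sigma(r_2)$ or even $\tilde\sigma\leq\sigma(r_2)$. You have correctly flagged this as the ``main obstacle'', and indeed it is: the paper's proof also applies (iv) with $s^1=[\gamma_{ik}(\sigma_k(r_1))]_k$ and $s^2=[\gamma_{ik}(\sigma_k(r_2))]_k$ while only verifying the $R_1\unit\leq\cdot\leq R_2\unit$ bounds, tacitly assuming $s^1\leq s^2$. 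So you have isolated a subtle point the paper passes over; your proposed resolution, however, does not close it.
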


\begin{proof}
We verify the four properties of a path of decay:%

Since $\sigma(r) = \Gamma_r(\sigma(r))$ for each $r \geq 0$, we have $\Gamma(\sigma(r)) \leq r\unit \oplus \Gamma(\sigma(r)) = \sigma(r)$ so that property (P1) of a path of decay is satisfied.%

As every fixed point $s$ of $\Gamma_r$ satisfies $r\unit \leq s \leq \varphi(r)\unit$, it follows that $\sigma$ satisfies property (P2) of a 
path of decay with $\sigma_{\min} = \id$ and $\sigma_{\max} = \varphi$.%

By assumption, $\sigma$ is locally Lipschitz continuous on $(0,\infty)$. Hence, for every compact interval $J \subset (0,\infty)$, there exists $C > 0$ with%
\begin{equation*}
  |\sigma_i(r_1) - \sigma_i(r_2)| \leq \|\sigma(r_1) - \sigma(r_2)\|_{\infty} \leq C |r_1 - r_2|%
\end{equation*}
for all $r_1,r_2 \in J$ and $i \in \N$. This shows half of property (P4) (see Remark \ref{rem_pod2}). Moreover, it shows that each $\sigma_i$ is continuous. Since $\sigma_i(r) \geq r$, it also follows that $\sigma_i(r) \rightarrow \infty$ as $r \rightarrow \infty$. If we can show the second half of property (P4), i.e.~the lower Lipschitz estimate for $\sigma_i$, both (P3) and (P4) will be fully proved.%

We first prove the following claim: For every $r_0 > 0$, there are $m \in \N$ and $\rho \in (0,r_0)$ such that for every $i \in \N$ there is $j \in \NC^-_i(m)$ with $\sigma_j(r) = r$ for all $r \in [r_0 - \rho,r_0 + \rho]$.%

Let $m = m(r_0,\varphi(r_0))$ be chosen such that the inequalities $s_i \geq r_0$ and $\|s\|_{\infty} \leq \varphi(r_0)$ imply that for every $i \in \N$ there is $j \in \NC^-_i(m)$ with $\Gamma_j(s) < \omega(s_j)$, where we use Proposition \ref{prop_ugas_char_part1} and Assumption (iii). Then, for every $i \in \N$ there exists $j_i \in \NC^-_i(m)$ with $\Gamma_{j_i}(\sigma(r_0)) < \omega(\sigma_{j_i}(r_0))$. To prove the claim, it suffices to show that there exists $\rho > 0$ with $\Gamma_{j_i}(\sigma(r)) < \sigma_{j_i}(r)$ for all $r \in [r_0 - \rho,r_0 + \rho]$ and $i \in \N$ because of the relation $\sigma_{j_i}(r) = \max\{r,\Gamma_{j_i}(\sigma(r))\}$. First, we can find $\ep > 0$ such that%
\begin{equation}\label{eq_ineq1}
  \sigma_{j_i}(r_0) - \Gamma_{j_i}(\sigma(r_0)) \geq \ep \mbox{\quad for all\ } i \in \N.%
\end{equation}
This can be achieved if we choose $\ep>0$ such that $\omega(r) \leq r - \ep$ for all $r \in [r_0,\varphi(r_0)]$. From the continuity of $\sigma$ and $\Gamma$, it follows that there exists $\rho > 0$ with%
\begin{equation}\label{eq_ineq2}
  \|\sigma(r) - \sigma(r_0)\|_{\infty} < \frac{\ep}{3} \mbox{\quad for all\ } r \in [r_0 - \rho,r_0 + \rho]%
\end{equation}
and%
\begin{equation}\label{eq_ineq3}
  \|\Gamma(\sigma(r)) - \Gamma(\sigma(r_0))\|_{\infty} < \frac{\ep}{3} \mbox{\quad for all\ } r \in [r_0 - \rho,r_0 + \rho].%
\end{equation}
Hence, for any $i \in \N$ and $r \in [r_0 - \rho,r_0 + \rho]$, the inequalities \eqref{eq_ineq1}, \eqref{eq_ineq2} and \eqref{eq_ineq3} together imply%
\begin{align*}
  \sigma_{j_i}(r) - \Gamma_{j_i}(\sigma(r)) \geq \sigma_{j_i}(r_0) - \frac{\ep}{3} - \Gamma_{j_i}(\sigma(r_0)) - \frac{\ep}{3} \geq \frac{\ep}{3} > 0.
\end{align*}
This proves the claim.%

Now, assume that $j = j_i \in I_i$ for some $i \in \N$. As $\sigma(r)$ is a fixed point of $\Gamma_r$, we have%
\begin{equation*}
  \sigma_i(r) = \max\{r,\Gamma_i(\sigma(r))\}.%
\end{equation*}
If the function $\alpha(r) := \Gamma_i(\sigma(r))$ on some interval $J$ satisfies $|\alpha(r_2) - \alpha(r_1)| \geq a|r_2 - r_1|$ for a constant $a>0$, then $|\sigma_i(r_2) - \sigma_i(r_1)| \geq \min\{1,a\}|r_2 - r_1|$ on $J$. Now, if $r_1 < r_2$ with $r_1,r_2 \in [r_0 - \rho,r_0 + \rho]$, then%
\begin{align*}
   \Gamma_i(\sigma(r_2)) - \Gamma_i(\sigma(r_1)) &= \mu_i( [\gamma_{ik}(\sigma_k(r_2))]_{k\in\N} ) - \mu_i( [\gamma_{ik}(\sigma_k(r_1))]_{k\in\N} ) \\
																&\geq l(\gamma_{ij}(r_2) - \gamma_{ij}(r_1)) \geq lc (r_2 - r_1),%
\end{align*}
where $l$ and $c$ come from Assumption (iv) and (v), respectively. To use Assumption (iv), we note the following:%
\begin{itemize}
\item For all $k \in I_i$, we have $\gamma_{ik}(\sigma_k(r_0+\rho)) \leq \gamma_{ik}(\varphi(r_0 + \rho))$ and the latter is uniformly bounded from above, which follows from Assumption (i).%
\item For all $k \in I_i$, we have%
\begin{align*}
  \gamma_{ik}(\sigma_k(r_0-\rho)) \geq \gamma_{ik}(r_0 - \rho) \geq \gamma_{ik}(r_0 - \rho) - \gamma_{ik}(\frac{1}{2}(r_0-\rho))
	\geq \frac{\hat{c}}{2}(r_0 - \rho),%
\end{align*}
where $\hat{c}$ comes from Assumption (v).%
\end{itemize}
We conclude that%
\begin{equation*}
  |\sigma_i(r_1) - \sigma_i(r_2)| \geq \min\{1,lc\} |r_1 - r_2| \mbox{\quad for all\ } r_1,r_2 \in [r_0 - \rho,r_0 + \rho].%
\end{equation*}
Now, if $j \in I_k$ and $k \in I_i$, then%
\begin{align*}
  \Gamma_i(\sigma(r_2)) - \Gamma_i(\sigma(r_1)) &\geq l(\gamma_{ik}(\sigma_k(r_2)) - \gamma_{ik}(\sigma_k(r_1))) \\
	&\geq lc (\sigma_k(r_2) - \sigma_k(r_1)) \geq \min\{lc,(lc)^2\}(r_2 - r_1),%
\end{align*}
implying%
\begin{equation*}
  |\sigma_i(r_1) - \sigma_i(r_2)| \geq \min\{1,lc,(lc)^2\} |r_1 - r_2|.%
\end{equation*}
Without loss of generality, we can assume that $lc < 1$. Then, inductively we obtain%
\begin{equation*}
  |\sigma_i(r_1) - \sigma_i(r_2)| \geq (lc)^m |r_1 - r_2|%
\end{equation*}
for all $i \in \N$ and $r_1,r_2 \in [r_0 - \rho,r_0 + \rho]$, which completes the proof.
\end{proof}

\begin{remark}
Assumption (iv) of the theorem is, for instance, satisfied for sum-type operators. It is not necessarily satisfied for max-type operators.
\end{remark}

The simplest case in Theorem \ref{thm_path_of_decay} is the one when there is a unique fixed point for each $\Gamma_r$, or equivalently, when $\sigma_*(r) = \sigma^*(r)$. The following corollary discusses this case for subadditive and homogeneous gain operators.%

\begin{corollary}
Assume that $\Gamma$ is a subadditive and homogeneous gain operator, which is well-defined and continuous, and satisfies the following assumptions:%
\begin{enumerate}
\item[(i)] $\inf_{n \in \N} \|\Gamma^n(\unit)\|_{\infty} < 1$.%
\item[(ii)] There exists $c > 1$ with $c^{-1} \leq \gamma_{ij} \leq c$ for all $i\in\N$ and $j \in I_i$.%
\item[(iii)] For all $0 < R_1 < R_2$, there exists $l>0$ such that the MAFs $\mu_i$ satisfy%
\begin{equation*}
  (\mu_i(s^2) - \mu_i(s^1)) \geq l(s^2_j - s^1_j),%
\end{equation*}
whenever $j \in I_i$ and $R_1\unit \leq s^1 \leq s^2 \leq R_2\unit$.
\end{enumerate}
Then $\sigma := \sigma_* = \sigma^*$ is a globally Lipschitz continuous path of decay for $\Gamma$. 
\end{corollary}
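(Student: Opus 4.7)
The plan is to invoke Theorem \ref{thm_path_of_decay} after (a) verifying its five hypotheses, (b) showing that $\Gamma_r$ has a unique fixed point for every $r \geq 0$, so that $\sigma_*(r) = \sigma^*(r) =: \sigma(r)$, and (c) upgrading the local Lipschitz continuity required by the theorem to a global Lipschitz estimate for $\sigma$. Throughout, I would use that by Proposition \ref{prop_homogeneous1} and \cite[Prop.~9]{mironchenko2021iss}, assumption (i) of the corollary gives UGES of the system induced by $\Gamma$: there exist $M > 0$ and $\alpha \in (0,1)$ with $\|\Gamma^n(s)\|_{\infty} \leq M\alpha^n\|s\|_{\infty}$ for all $s \in \ell_{\infty}^+$ and $n \geq 0$.

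To verify the five hypotheses of the theorem, I would argue as follows. Hypothesis (i) on pointwise equicontinuity of $\{\gamma_{ij}\}$ is immediate from linearity together with the uniform bound $\gamma_{ij} \leq c\cdot\id$. For hypothesis (iii), I set $\omega(r) := c_0 r$ for any $c_0 \in (\alpha,1)$; homogeneity then gives $\|\Gamma_{\omega}^n(s)\|_{\infty} = c_0^{-n}\|\Gamma^n(s)\|_{\infty} \leq M(\alpha/c_0)^n\|s\|_{\infty}$, so $\Gamma_{\omega}$ is UGES and a fortiori UGAS. Hypothesis (ii), the $\oplus$-MBI property, is the substantive verification: using $a \oplus b \leq a + b$ componentwise, any $s \leq \Gamma(s) \oplus b$ satisfies $s \leq \Gamma(s) + b$, and subadditivity plus homogeneity let me iterate to $s \leq \Gamma^n(s) + \sum_{k=0}^{n-1}\Gamma^k(b)$. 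Letting $n \rightarrow \infty$ and summing the geometric bound from UGES yields $\|s\|_{\infty} \leq \frac{M}{1-\alpha}\|b\|_{\infty}$. Hypothesis (iv) is exactly assumption (iii) of the corollary, and (v) follows from $\gamma_{ij} \geq c^{-1}\id$.

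For uniqueness of fixed points of $\Gamma_r$, I take any two fixed points $s^1 \leq s^2$ (by the extremality of $\sigma_*(r)$ and $\sigma^*(r)$ it suffices to consider these) and set $\Delta := s^2 - s^1 \geq 0$. A componentwise inspection of $s^k_i = \max\{r,\Gamma_i(s^k)\}$ combined with the subadditive bound $\Gamma(s^1 + \Delta) \leq \Gamma(s^1) + \Gamma(\Delta)$ yields $\Delta \leq \Gamma(\Delta)$; iterating gives $\Delta \leq \Gamma^n(\Delta)$ for all $n \geq 0$, and UGES forces $\Delta = 0$. The same kind of computation, applied to $\Delta := \sigma(r_2) - \sigma(r_1) \geq 0$ for $0 \leq r_1 \leq r_2$, produces $\Delta \leq (r_2-r_1)\unit + \Gamma(\Delta)$, which iterates to a global Lipschitz estimate $\|\sigma(r_2) - \sigma(r_1)\|_{\infty} \leq \frac{M}{1-\alpha}(r_2 - r_1)$ by the same geometric argument as for the $\oplus$-MBI property. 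This shows that $\sigma$ is a globally Lipschitz single-valued selection of $r \mapsto \Fix(\Gamma_r)$, so Theorem \ref{thm_path_of_decay} yields that it is a path of decay for $\Gamma$.

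The main obstacle is handling the interplay between the $\oplus$ in $\Gamma_r(s) = r\unit \oplus \Gamma(s)$ and the subadditivity of $\Gamma$: the $\oplus$-MBI property, the uniqueness of fixed points, and the Lipschitz estimate all rely on the same two-step pattern of replacing $\oplus$ by $+$ (losing at worst a factor of $2$) and then collapsing the resulting infinite iteration of $\Gamma$ via UGES. Once this pattern is set up once, it recurs in each of the three places essentially verbatim.
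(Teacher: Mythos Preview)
Your proposal is correct and follows essentially the same route as the paper: verify the hypotheses of Theorem \ref{thm_path_of_decay} (equicontinuity from the uniform bound on $\gamma_{ij}$, UGAS of $\Gamma_\omega$ for a linear $\omega$ via homogeneity and UGES, and the $\oplus$-MBI property via $\oplus \leq +$ and a geometric series), then use subadditivity together with the projection estimate $P_r(s^2)-P_r(s^1)\leq s^2-s^1$ to get $\Delta \leq \Gamma(\Delta)$ for uniqueness and $\Delta \leq (r_2-r_1)\unit + \Gamma(\Delta)$ for the global Lipschitz bound. The only cosmetic difference is that the paper keeps the $\oplus$ for one more step before passing to $+$ in the Lipschitz iteration, and it phrases uniqueness via the small-gain condition rather than iterating UGES directly; neither changes the substance.
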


\begin{proof}
Assumption (i) implies that the system induced by $\Gamma$ is UGES, see \cite[Prop.~9]{mironchenko2021iss}. If $s \leq \Gamma(s) \oplus b$, then $s \leq \Gamma(s) + b$. Using the subadditivity of $\Gamma$, we obtain inductively that $s \leq \Gamma^n(s) + \sum_{k=0}^{n-1} \Gamma^k(b)$ for all $n \in \Z_+$. Since $\Gamma^n(s) \rightarrow 0$ for $n \rightarrow \infty$, this implies $s \leq \sum_{k=0}^{\infty}\Gamma^k(b)$. Since the system induced by $\Gamma$ is UGES, we obtain%
\begin{equation*}
  \|s\|_{\infty} \leq \frac{M}{1-\gamma}\|b\|_{\infty}%
\end{equation*}
for some $M > 0$ and $\gamma \in (0,1)$. In particular, $\Gamma$ satisfies the $\oplus$-MBI property. Now, assume that $s^1 \leq s^2$ are two fixed points of $\Gamma_r$ for some $r > 0$. Then%
\begin{equation*}
  s^2 - s^1 = \Gamma_r(s^2) - \Gamma_r(s^1) \leq \Gamma(s^2) - \Gamma(s^1) \leq \Gamma(s^2 - s^1),%
\end{equation*}
where we used Proposition \ref{prop_pr_props} and $\Gamma(s^2) = \Gamma(s^1 + (s^2 - s^1)) \leq \Gamma(s^1) + \Gamma(s^2 - s^1)$. Since $\Gamma$ satisfies the small-gain condition (as a special case of the $\oplus$-MBI property), it follows that $s^1 = s^2$. Hence, $\sigma(r) := \sigma_*(r) = \sigma^*(r)$ is well-defined. For any $r_1 < r_2$ in $\R_+$, we obtain%
\begin{align*}
  \sigma(r_2) - \sigma(r_1) &= \Gamma_{r_2}(\sigma(r_2)) - \Gamma_{r_1}(\sigma(r_1)) \leq (r_2 - r_1)\unit \oplus (\Gamma(\sigma(r_2)) - \Gamma(\sigma(r_1))) \\
	&\leq (r_2 - r_1)\unit \oplus \Gamma(\sigma(r_2) - \sigma(r_1)) \\
	&\leq (r_2 - r_1)\unit \oplus \Gamma( (r_2 - r_1)\unit \oplus \Gamma(\sigma(r_2) - \sigma(r_1)) ) \\
	&\leq (r_2 - r_1)\unit \oplus \Gamma( (r_2 - r_1)\unit + \Gamma(\sigma(r_2) - \sigma(r_1)) \\
	&\leq (r_2 - r_1)\unit + \Gamma( (r_2 - r_1)\unit ) + \Gamma^2(\sigma(r_2) - \sigma(r_1)).%
\end{align*}
Inductively, we obtain%
\begin{equation*}
  \sigma(r_2) - \sigma(r_1) \leq \sum_{k=0}^n \Gamma^k((r_2 - r_1)\unit) + \Gamma^{n+1}(\sigma(r_2) - \sigma(r_1))%
\end{equation*}
for all $n \geq 0$. Hence, in the limit for $n \rightarrow \infty$, we have%
\begin{align*}
  \|\sigma(r_2) - \sigma(r_1)\|_{\infty} &\leq \sum_{k=0}^{\infty} \|\Gamma^k ((r_2 - r_1)\unit)\|_{\infty} \\
	&\leq \sum_{k=0}^{\infty} M \gamma^k |r_2 - r_1| = \frac{M}{1 - \gamma}|r_2 - r_1|.%
\end{align*}
It follows that $\sigma$ is globally Lipschitz continuous. It remains to show that Assumptions (i) and (iii) of Theorem \ref{thm_path_of_decay} are satisfied:%
\begin{itemize}
\item The equicontinuity of $\{\gamma_{ij}\}$ follows from Assumption (ii) of this theorem.%
\item Choose a linear $\omega \in (\gamma,1)$. Since $\Gamma$ is assumed to be homogeneous, $\Gamma_{\omega}^n(s) = \omega^{-n} \Gamma^n(s)$ for all $s$ and $n$. This easily implies that also the system induced by $\Gamma_{\omega}$ is UGES and thus UGAS.%
\end{itemize}
The proof is complete.
\end{proof}

\begin{remark}
It should be mentioned that subadditive and homogeneous gain operators also admit linear paths of decay under the given assumptions, which is shown in \cite{mironchenko2021iss}. The path $\sigma$ in the above result, in general, cannot expected to be linear.
\end{remark}

Specializing the preceding result to linear gain operators, we obtain the following corollary.%

\begin{corollary}
Assume that $\Gamma$ is a linear and continuous gain operator satisfying the following assumptions:%
\begin{enumerate}
\item[(i)] The spectral radius of $\Gamma$ satisfies $r(\Gamma) < 1$.
\item[(ii)] There exists $c > 1$ with $c^{-1} \leq \gamma_{ij} \leq c$ for all $i\in\N$ and $j \in I_i$.%
\end{enumerate}
Then $\sigma := \sigma_* = \sigma^*$ is a globally Lipschitz continuous path of decay for $\Gamma$. 
\end{corollary}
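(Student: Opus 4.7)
The plan is to reduce this statement to the previous corollary for subadditive and homogeneous gain operators. A linear gain operator is by definition a sum-type operator whose component gains $\gamma_{ij}$ are linear, hence it is automatically both subadditive and homogeneous, and assumption (ii) here is exactly assumption (ii) of that corollary. It therefore suffices to verify the two remaining hypotheses of the preceding corollary, namely $\inf_{n\in\N}\|\Gamma^n(\unit)\|_\infty < 1$ and the uniform MAF lower bound.

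For the first, I would invoke Gelfand's spectral radius formula $r(\Gamma) = \lim_{n\to\infty}\|\Gamma^n\|^{1/n}$ for the bounded linear operator $\Gamma$ on $\ell_\infty$. Since $r(\Gamma) < 1$ by hypothesis (i), this forces $\|\Gamma^n\| \to 0$, and because $\|\unit\|_\infty = 1$ one obtains $\|\Gamma^n(\unit)\|_\infty \leq \|\Gamma^n\| < 1$ for all sufficiently large $n$, so the infimum is strictly less than $1$.

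For the second, observe that the MAFs of a sum-type operator are $\mu_i(s) = \sum_{j \in I_i} s_j$, so for $s^1 \leq s^2$ in $\ell_\infty^+$ and any $j \in I_i$,
\[
  \mu_i(s^2) - \mu_i(s^1) \;=\; \sum_{k \in I_i}(s_k^2 - s_k^1) \;\geq\; s_j^2 - s_j^1,
\]
since every summand is nonnegative. Hence assumption (iii) of the preceding corollary is satisfied uniformly with $l = 1$, independently of $R_1$ and $R_2$.

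With these two observations, all hypotheses of the preceding corollary are met, and its conclusion, that $\sigma = \sigma_* = \sigma^*$ is a globally Lipschitz continuous path of decay for $\Gamma$, transfers directly. No substantive obstacle is anticipated: the only ingredient beyond direct bookkeeping is the standard application of Gelfand's spectral radius formula on $\ell_\infty$, which converts the spectral condition $r(\Gamma)<1$ into the operator-norm decay needed to trigger the previous corollary.
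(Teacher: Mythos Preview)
Your proposal is correct and matches the paper's approach: the corollary is stated in the paper as a direct specialization of the preceding one for subadditive homogeneous operators, and you have supplied precisely the two checks (spectral radius via Gelfand gives $\inf_n\|\Gamma^n(\unit)\|_\infty<1$, and the sum-type MAFs satisfy the lower-Lipschitz bound with $l=1$) that this specialization requires. The paper itself notes earlier that for linear operators condition \eqref{eq_linear_cond} is equivalent to $r(\Gamma)<1$, so nothing further is needed.
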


\begin{remark}
Actually, the estimate $\gamma_{ij} \leq c$ in Assumption (ii) of the above corollary is redundant, since this follows from the required continuity of $\Gamma$.
\end{remark}

Although Theorem \ref{thm_path_of_decay} is not applicable to max-type operators, because the corresponding MAFs do not satisfy Assumption (iv), we can show that $\sigma_*(r) = \sigma^*(r)$ is a path of decay for a max-type operator under mild assumptions.%

\begin{theorem}
Assume that $\Gamma$ is a well-defined and continuous max-type gain operator satisfying the following assumptions:%
\begin{enumerate}
\item[(i)] The system induced by $\Gamma$ is UGAS.%
\item[(ii)] For each compact interval $J \subset (0,\infty)$, there are $0 < l < L$ such that%
\begin{equation*}
  l|r_1 - r_2| \leq |\gamma_{ij}(r_1) - \gamma_{ij}(r_2)| \leq L|r_1 - r_2|%
\end{equation*}
for all $r_1,r_2 \in J$ and $i \in \N$, $j \in I_i$.
\end{enumerate}
Then $\sigma(r) := \sigma_*(r) = \sigma^*(r)$ is a path of decay for $\Gamma$.
\end{theorem}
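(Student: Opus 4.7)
The plan is to reduce the claim to the framework of Section~\ref{sec_path}, exploit max-preservation to identify $\sigma_*$ with $\sigma^*$, and then establish local Lipschitz continuity of the resulting $\sigma$ by hand, since assumption (iv) of Theorem~\ref{thm_path_of_decay} is not available for max-type operators.

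First, I would observe that UGAS of $\Gamma$ implies UGS together with global componentwise attractivity (because $|\pi_i \Gamma^k(s)| \leq \|\Gamma^k(s)\|_{\infty} \to 0$). The equivalence ``(a) $\Leftrightarrow$ (c) $\Leftrightarrow$ (d)'' in Proposition~\ref{prop_hatgamma_ugs} then yields UGS of $\hat{\Gamma}$ and the $\oplus$-MBI property, so the objects of Section~\ref{sec_path} are available with some $\varphi \in \KC_{\infty}$ and $r\unit \leq \sigma_*(r) \leq \sigma^*(r) \leq \varphi(r)\unit$.

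Next, I would identify $\sigma_*(r) = \sigma^*(r)$. By Proposition~\ref{prop_maxop1}, $\sigma_*(r) = \hat{Q}(r\unit) = \bigoplus_{k \geq 0} \Gamma^k(r\unit)$. A straightforward induction using max-preservation yields
\begin{equation*}
  \Gamma_r^n(\varphi(r)\unit) = \bigoplus_{k=0}^{n-1} \Gamma^k(r\unit) \;\oplus\; \Gamma^n(\varphi(r)\unit).
\end{equation*}
UGAS forces $\Gamma^n(\varphi(r)\unit) \to 0$ componentwise, while $\bigoplus_{k=0}^{n-1} \Gamma^k(r\unit) \nearrow \sigma_*(r)$, so the componentwise limit of $\Gamma_r^n(\varphi(r)\unit)$ equals $\sigma_*(r)$. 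Hence $\sigma^*(r) \leq \sigma_*(r)$, which together with the reverse inequality gives $\sigma := \sigma_* = \sigma^*$ as the unique fixed point of $\Gamma_r$.

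Finally, properties (P1) and (P2) are immediate from Proposition~\ref{prop_psd_elprops}(i)--(ii), so the main step is uniform bilipschitz continuity of $\sigma_i$ on compact intervals $J = [a,b] \subset (0,\infty)$. I would use UGAS to pick $N \in \N$ with $\|\Gamma^n(b\unit)\|_{\infty} < a$ for all $n \geq N$, so the tail terms of $\sigma_i(r) = \sup_{k \geq 0}\Gamma^k_i(r\unit)$ drop out and
\begin{equation*}
  \sigma_i(r) = \max\Bigl\{r,\; \max_{1 \leq k < N} \Gamma_i^k(r\unit)\Bigr\} \mbox{\quad for all\ } r \in J.
\end{equation*}
Max-preservation rewrites each $\Gamma^k_i(r\unit)$ as a supremum of chain compositions $\gamma_{i j_1} \circ \cdots \circ \gamma_{j_{k-1} j_k}(r)$ of length $k < N$, and the two-sided Lipschitz estimate (ii) on an enlarged compact interval $[a/2, \varphi(b)] \subset (0,\infty)$ (upper bound via $\oplus$-MBI, lower bound on intermediate values via the lower Lipschitz constant of the gains) extends to uniform bilipschitzness of each composition with exponents $l^N$ and $L^N$. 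Passing suprema through gives the upper estimate $|\sigma_i(r_2) - \sigma_i(r_1)| \leq \max\{1, L^N\}|r_2 - r_1|$ uniformly in $i$. For the lower bound I would split into the case $\sigma_i(r_1) = r_1$, where $\sigma_i(r_2) - \sigma_i(r_1) \geq r_2 - r_1$ is immediate, and the case $\sigma_i(r_1) > r_1$, where a chain nearly realising $\sigma_i(r_1)$ propagates the lower Lipschitz bound through its at most $N$ composed gains. The main obstacle, paralleling the closing argument of Theorem~\ref{thm_path_of_decay}, is to keep the intermediate values of these chains uniformly bounded inside a compact subset of $(0,\infty)$, which requires carefully tracking that chains contributing to the outer supremum stay away from $0$ thanks to the lower Lipschitz constant and the lower bound $r \geq a$.
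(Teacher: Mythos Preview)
Your proposal is correct and follows essentially the same route as the paper: both derive the $\oplus$-MBI property from UGAS via Proposition~\ref{prop_hatgamma_ugs}, use max-preservation to show that $\Gamma_r$ has a unique fixed point equal to $\bigoplus_{k\geq 0}\Gamma^k(r\unit)$, and then exploit UGAS to truncate this supremum to finitely many terms on each compact interval so that the bilipschitz hypothesis on the gains yields (P3) and (P4). The only cosmetic difference is that the paper proves uniqueness by starting from an arbitrary fixed point $s$ (obtaining $\Gamma_r^n(s)=\bigoplus_{k=0}^{n-1}\Gamma^k(r\unit)\oplus\Gamma^n(s)$) rather than from $\varphi(r)\unit$, and it outsources the final Lipschitz verification to \cite[Thm.~VI.1]{kawan2021lyapunov} instead of sketching the chain-composition argument directly as you do.
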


\begin{proof}
First, observe that Assumption (i) implies that $\Gamma$ satisfies the $\oplus$-MBI property, see Proposition \ref{prop_hatgamma_ugs}. Now, assume that $s \in \ell_{\infty}^+$ is a fixed point of $\Gamma_r$ for some $r > 0$. Then, using that $\Gamma$ is a max-preserving operator, one can show that%
\begin{equation*}
  s = \Gamma_r^n(s) = \bigoplus_{k=0}^{n-1}\Gamma^k(r\unit) \oplus \Gamma^n(s) \mbox{\quad for all\ } n \in \Z_+.%
\end{equation*}
Since $\Gamma^n(s)$ converges to zero as $n \rightarrow \infty$ by Assumption (i), we obtain%
\begin{equation*}
  s = \bigoplus_{k=0}^{\infty}\Gamma^k(r\unit) = \sigma_*(r) = \sigma^*(r) = \sigma(r).%
\end{equation*}
Now, \cite[Thm.~VI.1]{kawan2021lyapunov} shows that $\sigma$ is a path of decay. The main idea used there to show that $\sigma$ satisfies (P3) and (P4) is that for every $r > 0$ there exists $N(r) \in \N$ with%
\begin{equation*}
  \sigma(r) = \bigoplus_{k=0}^{N(r)}\Gamma^k(r\unit),%
\end{equation*}
which follows immediately from Assumption (i). Moreover, $N(r)$ can be chosen independently of $r$ on a compact interval $J \subset (0,\infty)$. This implies that each $\sigma_i$ can be written as the maximum of only finitely many strictly increasing and continuous functions, which yields $\sigma_i \in \KC_{\infty}$. From Assumption (ii), we can conclude that these finitely many functions satisfy uniform Lipschitz estimates locally, so the same must be true for the functions $\sigma_i$.
\end{proof}

We can also specialize Theorem \ref{thm_path_of_decay} to finite networks for which the developed theory also applies (as we can extend them in a trivial way to infinite networks). One way to specialize the result to finite networks is as follows.%

\begin{corollary}
Consider a finite network with associated gain operator $\Gamma:\R^N_+ \rightarrow \R^N_+$ and let the following assumptions hold:%
\begin{enumerate}
\item[(i)] The MAFs $\mu_i$, $i = 1,\ldots,N$, are subadditive and satisfy $\mu_i(s) \rightarrow \infty$ if $\|s\|_{\infty} \rightarrow \infty$.%
\item[(ii)] There exists $\rho \in \KC_{\infty}$ such that $(\id + \rho) \circ \Gamma$ satisfies the SGC.%
\item[(iii)] For all $0 < R_1 < R_2$, there exists $l>0$ such that the MAFs $\mu_i$ satisfy%
\begin{equation*}
  (\mu_i(s^2) - \mu_i(s^1)) \geq l(s^2_j - s^1_j),%
\end{equation*}
whenever $j \in I_i$ and $R_1\unit \leq s^1 \leq s^2 \leq R_2\unit$.%
\item[(iv)] The functions $\gamma_{ij}$ are $C^1$ on $(0,\infty)$ with $\gamma_{ij}'(r) > 0$ for all $r > 0$.%
\end{enumerate}
Then every single-valued selection of the set-valued mapping $r \mapsto \Fix(\Gamma_r)$ is a path of decay for $\Gamma$ provided that it is locally Lipschitz continuous on $(0,\infty)$.
\end{corollary}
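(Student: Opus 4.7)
The plan is to verify the five hypotheses of Theorem \ref{thm_path_of_decay} under the assumptions of the corollary, after which the theorem delivers the conclusion. Three hypotheses are essentially immediate from the finite-network setting: pointwise equicontinuity of $\{\gamma_{ij}\}$ (hypothesis (i)) is automatic since the family is finite; hypothesis (iv) of the theorem is literally hypothesis (iii) of the corollary; and hypothesis (v) follows from the $C^1$-assumption together with $\gamma_{ij}'(r) > 0$ on $(0,\infty)$, by taking the positive minimum of $\gamma_{ij}'$ over a compact interval $J \subset (0,\infty)$ and then over the finitely many pairs $(i,j)$ with $j \in I_i$.

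To verify hypothesis (iii) of the theorem I set $\omega := (\id+\rho)^{-1}$, so that $\omega \in \KC_\infty$ with $\omega < \id$ and $\Gamma_\omega = \omega^{-1} \circ \Gamma = (\id+\rho)\circ\Gamma$. By the corollary's hypothesis (ii), $\Gamma_\omega$ satisfies the SGC. The operator $\Gamma_\omega$ is continuous and monotone on $\R^N_+$, and its associated MAFs $(\id+\rho)\circ\mu_i$ inherit the coercivity assumption (i) of the corollary. In finite dimensions these ingredients—SGC, continuity, monotonicity, and coercivity—suffice to invoke the classical finite-dimensional small-gain theorem (see, e.g., \cite{dashkovskiy2010small,ruffer2010monotone}) to conclude UGAS of the discrete-time system induced by $\Gamma_\omega$.

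The remaining hypothesis (ii) of the theorem, the $\oplus$-MBI property, is the main obstacle. The approach is first to establish the uniform SGC for $(\id+\rho)\circ\Gamma$ via a compactness argument in $\R^N_+$: for each $r > 0$, the sphere $\{s \in \R^N_+ : \|s\|_\infty = r\}$ is compact and the map $s \mapsto \dist((\id+\rho)\Gamma(s) - s, \R^N_+)$ is continuous and strictly positive on it by the SGC of $(\id+\rho)\circ\Gamma$. Its minimum defines a continuous positive function $\eta_0(r)$. To produce an $\eta \in \KC_\infty$, one monotonizes $\eta_0$ and argues $\eta_0(r) \to \infty$ as $r \to \infty$: a contradiction argument shows that if some sequence $s^k$ had $\|s^k\|_\infty \to \infty$ while the distance remained bounded, then the SGC-witnessing index (constant along a subsequence, since there are only finitely many) would force the corresponding $\Gamma_{i^*}(s^k)$ to stay bounded, which together with the coercivity assumption (i) of the corollary, traced through the interaction graph, yields a contradiction. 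Uniform SGC for $(\id+\rho)\circ\Gamma$ then gives MBI and hence $\oplus$-MBI by Proposition \ref{prop_smallgain_conds}; and since $\Gamma(s) \leq (\id+\rho)\circ\Gamma(s)$, the inequality $s \leq \Gamma(s) \oplus b$ implies $s \leq (\id+\rho)\circ\Gamma(s) \oplus b$, so the $\oplus$-MBI bound transfers to $\Gamma$ with the same $\varphi$. The technical heart of the argument is this unboundedness step, where the coercivity of the MAFs is decisive; everything else is routine.
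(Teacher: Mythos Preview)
Your verification of hypotheses (i), (iv), and (v) of Theorem \ref{thm_path_of_decay} is correct and matches the paper's reasoning. Your argument for hypothesis (iii) (UGAS of $\Gamma_\omega$) is also essentially correct, though the paper hedges by invoking \cite[Thm.~4.6]{ruffer2010monotone} to get UGAS of $(\id+\tilde\rho)\circ\Gamma$ for a possibly smaller $\tilde\rho$, rather than the given $\rho$.

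The genuine gap is in your argument for the $\oplus$-MBI property. Your route is to prove the uniform SGC for $(\id+\rho)\circ\Gamma$ and then transfer; the critical step is the claim that $\eta_0(r)\to\infty$. This claim is false in general, even under all assumptions of the corollary. Take $N=2$, sum-type MAFs, $\gamma_{12}(r)=\gamma_{21}(r)=r/2$, and $\rho(r)=r-\arctan r$. Then $\rho\in\KC_\infty$, $(\id+\rho)\circ\Gamma$ satisfies the SGC, all four hypotheses of the corollary hold, yet along the diagonal $s=(t,t)$ one computes
\[
\dist\bigl((\id+\rho)\Gamma(s)-s,\R^2_+\bigr)=\arctan(t/2)\leq \pi/2,
\]
so $\eta_0$ is bounded and the uniform SGC for $(\id+\rho)\circ\Gamma$ fails. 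Your sketched contradiction (``the SGC-witnessing index would force $\Gamma_{i^*}(s^k)$ to stay bounded'') does not go through here: at $s^k=(k,k)$ the witnessing index has $\Gamma_{i^*}(s^k)=k/2\to\infty$. Coercivity of the MAFs does not help, since the obstruction is the behavior of $\rho$ at infinity, not the growth of $\mu_i$.

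The paper avoids this by not passing through the uniform SGC at all: it cites \cite[Thm.~6.1]{ruffer2010monotone} to obtain the MBI property for $\Gamma$ directly from subadditivity of the MAFs together with the strong SGC (assumption (ii)), and then MBI $\Rightarrow$ $\oplus$-MBI by Proposition \ref{prop_smallgain_conds}. The subadditivity is what makes that argument work, and it is precisely the ingredient your compactness sketch does not exploit.
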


\begin{proof}
As the family $\{\gamma_{ij}\}$ only consists of finitely many functions, it is automatically equicontinuous. Then also the assumption of Proposition \ref{prop_gamma_welldefined} is satisfied, showing that $\Gamma$ is well-defined. As $\Gamma$ is composed of finitely many continuous functions, it is continuous.%

From Assumptions (i) and (ii), it follows by \cite[Thm.~6.1]{ruffer2010monotone} that $\Gamma$ satisfies the MBI-property, which in turn implies the $\oplus$-MBI property.%

By \cite[Thm.~4.6]{ruffer2010monotone}, Assumption (ii) also implies that the system induced by $(\id + \tilde{\rho}) \circ \Gamma$ is UGAS for some $\tilde{\rho} \in \KC_{\infty}$. Finally, Assumption (iv) implies that the functions $\gamma_{ij}$ satisfy local Lipschitz estimates from below.
\end{proof}

For finite networks, the assumption that $\Gamma_r$ has only one fixed point for each $r>0$ already implies that $\sigma$ is continuous, which is an easy consequence of the following proposition.%

\begin{proposition}
Assume that for each $r > 0$, $\Gamma_r$ has only one fixed point $\sigma(r) = \sigma_*(r) = \sigma^*(r)$ and the following two sequences of functions converge to $\sigma(r)$ with respect to the $\ell_{\infty}$-norm: (1) $f_n(r) = \Gamma_r^n(r\unit)$ and (2) $g_n(r) = \Gamma_r^n(\varphi(r)\unit)$. Then $\sigma$ is continuous.
\end{proposition}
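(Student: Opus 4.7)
The strategy is a squeeze argument. For each $n$, both $f_n$ and $g_n$ are continuous $\ell_\infty^+$-valued functions of $r$, and they sandwich $\sigma$; the hypothesised $\ell_\infty$-convergence then propagates continuity to $\sigma$ via a standard $\varepsilon$-argument.

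The first step is to verify that each iterate $f_n,g_n:(0,\infty)\to\ell_\infty^+$ is continuous in the $\ell_\infty$-norm. An induction on $n$ does it. The joint continuity of $(r,s)\mapsto\Gamma_r(s)=P_r(\Gamma(s))$ follows from continuity of $\Gamma$ (Assumption \ref{ass_standing}) together with the joint Lipschitz estimate
\[
  \|P_{r_1}(s_1)-P_{r_2}(s_2)\|_\infty \le |r_1-r_2|+\|s_1-s_2\|_\infty,
\]
which combines the non-expansiveness from Proposition \ref{prop_pr_props} with the pointwise bound $|\max(r_1,a)-\max(r_2,a)|\le|r_1-r_2|$. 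Since $r\mapsto r\unit$ and $r\mapsto\varphi(r)\unit$ are continuous in $\ell_\infty$ (the latter because $\varphi\in\KC_\infty$), the induction closes and yields continuity of $f_n$ and $g_n$ for every $n\in\Z_+$.

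The second step uses the representations $\sigma(r)=\sigma_*(r)=\sup_n f_n(r)$ and $\sigma(r)=\sigma^*(r)=\inf_n g_n(r)$ (both componentwise) to obtain the sandwich $f_n(r)\le\sigma(r)\le g_n(r)$; as all three vectors lie in $\ell_\infty^+$, this upgrades to the norm estimate
\[
  \|\sigma(r)-f_n(r)\|_\infty \le \|g_n(r)-f_n(r)\|_\infty.
\]
For the final step, fix $r_0>0$ and $\varepsilon>0$. The norm-convergence hypothesis chooses $n$ with $\|f_n(r_0)-\sigma(r_0)\|_\infty$ and $\|g_n(r_0)-\sigma(r_0)\|_\infty$ both below $\varepsilon/4$; continuity of $f_n$ and $g_n$ at $r_0$ chooses $\delta>0$ so that $|r-r_0|<\delta$ makes $\|f_n(r)-f_n(r_0)\|_\infty$ and $\|g_n(r)-g_n(r_0)\|_\infty$ also smaller than $\varepsilon/4$. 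A triangle-inequality chain gives $\|g_n(r)-f_n(r)\|_\infty<\varepsilon$, whence $\|\sigma(r)-f_n(r)\|_\infty\le\varepsilon$ by the sandwich, and one more triangle step produces $\|\sigma(r)-\sigma(r_0)\|_\infty<2\varepsilon$. The only conceptual subtlety is the need to promote componentwise behaviour of $(g_n(r))_n$ (which, unlike $(f_n(r))_n$, is not obviously monotone in $n$) to $\ell_\infty$-norm convergence; this promotion is exactly what the hypothesis supplies, so once Step~1 is in place the remainder is routine.
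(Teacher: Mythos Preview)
Your proof is correct and follows essentially the same squeeze idea as the paper: both arguments rely on the continuity of $f_n$ and $g_n$ together with the sandwich $f_n(r)\le\sigma(r)\le g_n(r)$ and the assumed norm convergence at a fixed $r_0$. The only cosmetic difference is that the paper packages the estimate via an order interval $s^1\ll\sigma(r_0)\ll s^2$ and the monotonicity of $\sigma$, whereas you use direct triangle-inequality bounds in the $\ell_\infty$-norm; your Step~1 also spells out the continuity of $f_n,g_n$ that the paper simply asserts.
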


\begin{proof}
We fix $r_0 > 0$, $\ep > 0$ and two vectors $s^1 \ll \sigma(r_0) \ll s^2$ such that $\|s^1 - s^2\|_{\infty} \leq \ep$, where the notation $s \ll t$ means that $t - s \in \inner(\ell_{\infty}^+)$. By assumption, there exists $n \in \Z_+$ with $s^1 \ll f_n(r_0) \leq g_n(r_0) \ll s^2$. Since $f_n$ and $g_n$ are continuous, there exists a neighborhood $J \subset (0,\infty)$ of $r_0$ with $s^1 \ll f_n(r) \leq g_n(r) \ll s^2$ for all $r \in J$. It follows that for all $r \in J$ with $r > r_0$ we have $\sigma(r) - \sigma(r_0) \leq g_n(r) - f_n(r_0) \ll s^2 - s^1$, which implies $\|\sigma(r) - \sigma(r_0)\|_{\infty} \leq \|s^2 - s^1\| \leq \ep$. Analogously, if $r \in J$ and $r < r_0$, then $\sigma(r_0) - \sigma(r) \leq g_n(r_0) - f_n(r) \ll s^2 - s^1$. This concludes the proof.
\end{proof}

Finally, we take a look at sum-type operators. It seems to us that the requirement that $\Gamma_r$ has only one fixed point for each $r > 0$ is unnecessarily strong in this case (although we are not able to prove it). We first investigate what the assumption that $\Gamma$ induces a UGAS system implies for the fixed point set of $\Gamma_r$ (in the general case).%

\begin{proposition}
Assume that $\Gamma$ satisfies the $\oplus$-MBI property and the system induced by $\Gamma$ is UGAS. Then, there exists $n = n(r)$ such that for every $i \in \N$ there is $j \in \NC^-_i(n)$ such that for any fixed point $s \in \Fix(\Gamma_r)$ we have $s_j = \sigma_{*,j}(r) = r$.
\end{proposition}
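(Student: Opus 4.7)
The plan is to reduce the statement to the small-gain-style characterization of UGAS given in Proposition \ref{prop_ugas_char_part1}, applied with the trivial scaling $\omega = \id$. Since UGAS of the system induced by $\Gamma$ is assumed, that proposition yields the implication: for every $R,\alpha > 0$ there exists $n = n(R,\alpha) \in \N$ such that
\begin{equation*}
  \|s\|_{\infty} \leq R \mbox{\ and\ } s_i \geq \alpha \quad \Rightarrow \quad \Gamma_j(s) < s_j \mbox{\ for some\ } j \in \NC^-_i(n).
\end{equation*}
The $\oplus$-MBI property bounds the fixed-point set of $\Gamma_r$ from above and below, namely every $s \in \Fix(\Gamma_r)$ lies in the order interval $[r\unit,\varphi(r)\unit]$, so in particular $\|s\|_{\infty} \leq \varphi(r)$ and $s_i \geq r$ for every $i$. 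Choosing $R = \varphi(r)$ and $\alpha = r$ fixes the desired $n = n(r)$.

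To produce an index $j$ that works uniformly across all fixed points (not merely one depending on a given $s$), I would apply the implication to the \emph{maximal} fixed point $\sigma^*(r)$. For each $i \in \N$ this gives some $j \in \NC^-_i(n)$ with $\Gamma_j(\sigma^*(r)) < \sigma^*_j(r)$. Since $\sigma^*(r)$ is a fixed point of $\Gamma_r$, one has $\sigma^*_j(r) = \max\{r,\Gamma_j(\sigma^*(r))\}$; combined with the strict inequality just obtained, this forces the maximum to be attained by $r$, i.e.\ $\sigma^*_j(r) = r$.

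For an arbitrary $s \in \Fix(\Gamma_r)$, maximality of $\sigma^*(r)$ yields $s_j \leq \sigma^*_j(r) = r$, while the $\oplus$-MBI lower bound gives $s_j \geq r$, so $s_j = r$. The same squeeze applied to the minimal fixed point, $r \leq \sigma_{*,j}(r) \leq \sigma^*_j(r) = r$, gives $\sigma_{*,j}(r) = r$, completing the statement.

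The main obstacle, and the reason this formulation is non-trivial, is precisely the uniformity of $j$ over all fixed points: Proposition \ref{prop_ugas_char_part1} only produces a $j$ depending on the specific vector $s$ it is applied to. The key observation that unlocks the proof is that the maximal fixed point $\sigma^*(r)$ dominates every other fixed point pointwise, so an index where $\sigma^*(r)$ collapses to $r$ automatically forces every other fixed point to collapse there as well.
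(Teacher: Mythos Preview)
Your proposal is correct and follows essentially the same route as the paper's own proof: apply Proposition~\ref{prop_ugas_char_part1} with $\omega = \id$ and parameters $R = \varphi(r)$, $\alpha = r$ to the maximal fixed point $\sigma^*(r)$, deduce $\sigma^*_j(r) = r$ from the fixed-point equation, and then squeeze every other fixed point (including $\sigma_*(r)$) between $r$ and $\sigma^*_j(r)$. Your closing remark about why the maximal fixed point is the right object to apply the implication to is exactly the key observation.
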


\begin{proof}
Consider the maximal fixed point $s^* := \sigma^*(r)$, which satisfies $r\unit \leq s \leq \varphi(r)\unit$. By Proposition \ref{prop_ugas_char_part1}, UGAS implies the existence of $n = n(r,\varphi(r)) \in \N$ such that for every $i \in \N$ there is $j \in \NC^-_i(n)$ with $\Gamma_j(s^*) < s^*_j$. This implies $s^*_j = r$, and hence $r \leq \sigma_{*,j}(r) \leq s^*_j = r$. Since any other fixed point lies between $\sigma_*(r)$ and $\sigma^*(r)$, all fixed points agree with $\sigma_*(r)$ in the component $j$.  
\end{proof}

Now, we look at sum-type operators. Let us take a closer look at the fixed point equation $s = \Gamma_r(s)$ and try to characterize its unique solvability. In components, the fixed point equation reads%
\begin{equation*}
  s_i = \max\{r,\Gamma_i(s)\},\quad i \in \N.%
\end{equation*}
Let us now define the index set $\IC = \IC(s) := \{ i \in \N : \Gamma_i(s) > r \}$. We then have%
\begin{equation*}
  s_i = \left\{\begin{array}{cc}
	               \Gamma_i(s) & \mbox{if } i \in \IC, \\
									r & \mbox{if } i \notin \IC.
								\end{array}\right.%
\end{equation*}
Hence, the possible non-uniqueness of $s$ is only an issue on the index set $\IC$. Here,%
\begin{equation*}
  s_i = \Gamma_i(s) = \sum_{j \in I_i}\gamma_{ij}(s_j) = \sum_{j \in I_i \cap \IC} \gamma_{ij}(s_j) + \sum_{j \in I_i \setminus \IC} \gamma_{ij}(r).%
\end{equation*}
The equation can thus be rewritten as%
\begin{equation*}
  s_i - \sum_{j \in I_i \cap \IC}\gamma_{ij}(s_j) = \sum_{j \in I_i \setminus \IC}\gamma_{ij}(r).%
\end{equation*}
We define an operator $\Gamma_{\IC}$ on $\ell_{\infty}^+(\IC)$ by%
\begin{equation*}
  \Gamma_{\IC}(s) := \Bigl(\sum_{j \in I_i \cap \IC} \gamma_{ij}(s_j)\Bigr)_{i \in \IC}.%
\end{equation*}
Then the above implies%
\begin{equation*}
  (\id - \Gamma_{\IC})(s|_{\IC}) = h_{\IC}(r) := \Bigl(\sum_{j \in I_i \setminus \IC}\gamma_{ij}(r)\Bigr)_{i \in \IC}.%
\end{equation*}
This is equivalent to $s|_{\IC} \in (\id - \Gamma_{\IC})^{-1}(h_{\IC}(r))$. Since the index set $\IC$ depends on $s$, the full fixed point set of $\Gamma_r$ can be written as%
\begin{equation*}
  \Fix(\Gamma_r) = \bigcup_{\IC \subseteq \N}\Bigl( (\id - \Gamma_{\IC})^{-1}(h_{\IC}(r)) \cap \{s : \Gamma_i(s) > r \Leftrightarrow i \in \IC \} \Bigr),%
\end{equation*}
where the union is a disjoint union. It seems to be a too strong requirement that this union contains only one point.%

Nevertheless, we end the paper with a sufficient condition under which the fixed points of $\Gamma_r$ are unique for a sum-type operator and a path of decay exists.%

\begin{theorem}
Assume that $\Gamma$ is a sum-type gain operator satisfying the $\oplus$-MBI property. Additionally to assumptions (i), (iii) and (v) in Theorem \ref{thm_path_of_decay}, let the following assumptions hold:%
\begin{enumerate}
\item[(i)] $\Gamma^2 = \Gamma \circ \Gamma$ is an order contraction on each of the order intervals $[R_1\unit,R_2\unit]$. That is, there exist constants $K = K(R_1,R_2) \in (0,1)$ such that $\|\Gamma^2(s^2) - \Gamma^2(s^1)\|_{\infty} \leq K \|s^2 - s^1\|_{\infty}$ whenever $R_1\unit \leq s^1 \leq s^2 \leq R_2\unit$.%
\item[(ii)] The cardinality of $I_i$ is bounded over $i \in \N$ and for every compact interval $J \subset (0,\infty)$, there exists $C > 0$ such that%
\begin{equation*}
  |\gamma_{ij}(r_1) - \gamma_{ij}(r_2)| \leq C |r_1 - r_2|,%
\end{equation*}
whenever $r_1,r_2 \in J$ and $i \in \N$, $j \in I_i$.%
\end{enumerate}
Then $\sigma(r) := \sigma_*(r) = \sigma^*(r)$ is a path of decay for $\Gamma$.
\end{theorem}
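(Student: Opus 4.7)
I aim to reduce the statement to Theorem~\ref{thm_path_of_decay} applied to the selection $r \mapsto \sigma(r)$. This requires checking (a) that the set-valued mapping $r \mapsto \Fix(\Gamma_r)$ is in fact single-valued for each $r > 0$, so that $\sigma_*(r) = \sigma^*(r)$, and (b) that the common value $\sigma(r)$ depends locally Lipschitz continuously on $r$. The remaining hypotheses of Theorem~\ref{thm_path_of_decay} are immediate: (i), (iii), (v) are assumed in the statement; (ii) is the assumed $\oplus$-MBI property; and (iv) holds for sum-type MAFs with constant $l = 1$, since for every $j \in I_i$ and every ordered $s^1 \leq s^2$,
\[
\mu_i(s^2) - \mu_i(s^1) = \sum_{k \in I_i}(s^2_k - s^1_k) \geq s^2_j - s^1_j,
\]
the sum running over the finite set $I_i$.

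\noindent\textbf{Single-valuedness.} Put $s^1 := \sigma_*(r) \leq s^2 := \sigma^*(r)$; both lie in $[r\unit, \varphi(r)\unit]$ and satisfy $s^j = r\unit \oplus \Gamma(s^j)$. A direct componentwise argument — using $\max(r,\alpha_2) - \max(r,\alpha_1) \leq \alpha_2 - \alpha_1$ for $\alpha_1 \leq \alpha_2$ — yields $s^2 - s^1 \leq \Gamma(s^2) - \Gamma(s^1)$. The plan is to iterate this step once more using the equivalent representation $s^j = P_r(\Gamma(u^j))$ with $u^j := P_r(\Gamma(s^j)) \in [r\unit, \varphi(r)\unit]$, combined with the non-expansiveness of $P_r$ from Proposition~\ref{prop_pr_props}, to obtain $s^2 - s^1 \leq \Gamma^2(s^2) - \Gamma^2(s^1)$ in the order of $\ell_{\infty}^+$. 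The order-contraction assumption (i) then gives $\|s^2 - s^1\|_{\infty} \leq K \|s^2 - s^1\|_{\infty}$ with $K = K(r, \varphi(r)) < 1$, so $s^1 = s^2$ and $\sigma(r) := \sigma_*(r) = \sigma^*(r)$ is well-defined.

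\noindent\textbf{Local Lipschitz continuity.} Fix a compact interval $J = [a,b] \subset (0,\infty)$ and set $R_1 := a$, $R_2 := \varphi(b)$, so $\sigma(r) \in [R_1\unit, R_2\unit]$ for all $r \in J$. For $r_1 < r_2$ in $J$, split
\[
\|\sigma(r_2) - \sigma(r_1)\|_{\infty} \leq \|\Gamma_{r_2}^2(\sigma(r_2)) - \Gamma_{r_2}^2(\sigma(r_1))\|_{\infty} + \|\Gamma_{r_2}^2(\sigma(r_1)) - \Gamma_{r_1}^2(\sigma(r_1))\|_{\infty}.
\]
The first term is absorbed into the left-hand side with factor $K < 1$ by the same contraction-transfer argument as in the single-valuedness step, now applied on $[R_1\unit, R_2\unit]$. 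For the second term, write $\Gamma_{r_2}^2 - \Gamma_{r_1}^2$ as a telescoping sum; the componentwise inequality $|\max\{r_2,\alpha\} - \max\{r_1,\alpha\}| \leq r_2 - r_1$, together with the uniform bound $B := \sup_i \#I_i < \infty$ and the Lipschitz constant $C$ of the gains $\gamma_{ij}$ on $[R_1, R_2]$ furnished by the second assumption of the current theorem, yields a bound of the form $(1 + CB)(r_2 - r_1)$. Rearranging gives $\|\sigma(r_2) - \sigma(r_1)\|_{\infty} \leq \frac{1 + CB}{1-K}(r_2 - r_1)$, after which Theorem~\ref{thm_path_of_decay} closes the proof.

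\noindent\textbf{Main obstacle.} Both the uniqueness argument and the contraction part of the Lipschitz estimate hinge on transferring the order-contraction of $\Gamma^2$ faithfully to the composite $\Gamma_r^2 = P_r \circ \Gamma \circ P_r \circ \Gamma$. The inserted projection $P_r$ may raise components of $\Gamma(s)$ that fall below $r$, producing a naive additive error of order $r$ that is not proportional to $\|s^2 - s^1\|_{\infty}$ and would break the contraction. The resolution should exploit the self-consistency of the fixed-point equation: on indices $k$ with $\Gamma_k(s^j) < r$, the identity $s^j_k = r$ forces $s^2_k - s^1_k = 0$, so these indices do not contribute to the quantity being estimated, and only the indices on which the projection is inactive survive. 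Making this componentwise dichotomy precise — separating active- and inactive-projection indices and showing that the active-projection ones cancel in both the uniqueness step and in term (A) of the Lipschitz decomposition — is where most of the technical work lies.
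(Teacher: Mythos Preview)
Your overall strategy --- verify (i)--(v) of Theorem~\ref{thm_path_of_decay} for sum-type MAFs, prove $\Fix(\Gamma_r)$ is a singleton, and establish local Lipschitz continuity of $\sigma$ via the same two-term split $\|\Gamma_{r_2}^2(\sigma(r_2))-\Gamma_{r_2}^2(\sigma(r_1))\|_\infty + \|\Gamma_{r_2}^2(\sigma(r_1))-\Gamma_{r_1}^2(\sigma(r_1))\|_\infty$ --- is exactly the paper's route, and your bound on the second term matches the paper's up to harmless constants.

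Where you diverge is in the resolution of your ``main obstacle.'' You propose to tame the inserted projection $P_r$ by a dichotomy on active/inactive indices, relying on the \emph{fixed-point self-consistency} $s^j_k=r$ when $\Gamma_k(s^j)<r$. That works for the uniqueness step, but not for term~(A) of the Lipschitz split: there $\sigma(r_1)$ is \emph{not} a fixed point of $\Gamma_{r_2}$, so the self-consistency you invoke is unavailable. The paper sidesteps this entirely by isolating a general, fixed-point-free inequality valid for sum-type $\Gamma$: for any $s\in\ell_\infty^+$ and any $s^1\le s^2$,
\[
\Gamma_i(s\oplus s^2)-\Gamma_i(s\oplus s^1)\;\le\;\Gamma_i(s^2)-\Gamma_i(s^1)\qquad(i\in\N),
\]
proved in one line via $\max\{\gamma_{ij}(s_j),\gamma_{ij}(s^2_j)\}-\max\{\gamma_{ij}(s_j),\gamma_{ij}(s^1_j)\}\le \gamma_{ij}(s^2_j)-\gamma_{ij}(s^1_j)$ and summing over $j\in I_i$. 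Applied with $s=r\unit$ (for uniqueness) and with $s=r_2\unit$ (for term~(A)), this gives directly
\[
\|\Gamma_{r_2}^2(\sigma(r_2))-\Gamma_{r_2}^2(\sigma(r_1))\|_\infty\le\|\Gamma^2(\sigma(r_2))-\Gamma^2(\sigma(r_1))\|_\infty\le K\|\sigma(r_2)-\sigma(r_1)\|_\infty,
\]
with no appeal to any fixed-point relation. This is precisely the ``index dichotomy'' you are reaching for, just packaged as a standalone inequality; once you state it that way, both steps go through uniformly and the obstacle disappears.
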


\begin{proof}
We first prove that for any $s,s^1,s^2 \in \ell_{\infty}^+$ satisfying $s^1 \leq s^2$, we have%
\begin{equation}\label{eq_sumtype_ineq}
  \|\Gamma(s \oplus s^2) - \Gamma(s \oplus s^1)\|_{\infty} \leq \|\Gamma(s^2) - \Gamma(s^1)\|_{\infty}.%
\end{equation}
For each $i \in \N$, we have%
\begin{align*}
  \Gamma_i(s \oplus s^2) - \Gamma_i(s \oplus s^1) &= \sum_{j\in I_i}\gamma_{ij}(\max\{s_j,s_j^2\}) - \sum_{j\in I_i}\gamma_{ij}(\max\{s_j,s_j^1\}) \\
	&= \sum_{j \in I_i} \Bigl( \max\{ \gamma_{ij}(s_j),\gamma_{ij}(s_j^2) \} - \max\{ \gamma_{ij}(s_j),\gamma_{ij}(s^1_j) \} \Bigr) \\
	&\leq \sum_{j \in I_i} \max\{ \gamma_{ij}(s_j) - \gamma_{ij}(s_j), \gamma_{ij}(s_j^2) - \gamma_{ij}(s_j^1) \} \\
	&= \sum_{j \in I_i} \max\{ 0, \gamma_{ij}(s_j^2) - \gamma_{ij}(s_j^1) \} \\
	&= \sum_{j \in I_i} (\gamma_{ij}(s_j^2) - \gamma_{ij}(s_j^1)) = \Gamma_i(s^2) - \Gamma_i(s^1).%
\end{align*}
This implies the desired inequality. Now, fix a compact interval $J \subset (0,\infty)$ and choose $0 < R_1 < R_2$ such that $R_1 \leq r \leq \varphi^{-1}(R_2)$ for all $r \in J$. Let $K$ be the contraction constant of $\Gamma^2$ on $[R_1\unit,R_2\unit]$. For any $0 < r_1 < r_2$ in $J$ and $\sigma := \sigma_*$, it follows that%
\begin{align*}
  &\|\sigma(r_2) - \sigma(r_1)\|_{\infty} = \|\Gamma_{r_2}^2(\sigma(r_2)) - \Gamma_{r_1}^2(\sigma(r_1))\|_{\infty} \\
	&\leq \|\Gamma_{r_2}^2(\sigma(r_2)) - \Gamma_{r_2}^2(\sigma(r_1))\|_{\infty} + \|\Gamma_{r_2}^2(\sigma(r_1)) - \Gamma_{r_1}^2(\sigma(r_1))\|_{\infty} \\
	&\leq K\|\sigma(r_2) - \sigma(r_1)\|_{\infty} + \max\{1,L_{\Gamma}\} |r_1 - r_2|,%
\end{align*}
where $L_{\Gamma}$ is a Lipschitz constant of $\Gamma$ on $[R_1\unit,R_2\unit]$ whose existence easily follows from Assumption (ii). Here, exploiting the contraction property of the projections $P_r$ (see Proposition \ref{prop_pr_props}), we use that%
\begin{align*}
  \|\Gamma_{r_2}^2(\sigma(r_2)) - \Gamma_{r_2}^2(\sigma(r_1))\|_{\infty} &\leq \|\Gamma(r_2\unit \oplus \Gamma(\sigma(r_2))) - \Gamma(r_2\unit \oplus \Gamma(\sigma(r_1)))\|_{\infty} \\
	&\stackrel{\eqref{eq_sumtype_ineq}}{\leq} \|\Gamma^2(\sigma(r_2)) - \Gamma^2(\sigma(r_1))\|_{\infty}%
\end{align*}
and%
\begin{align*}
  \Gamma_{r_2}^2(\sigma(r_1)) - \Gamma_{r_1}^2(\sigma(r_1)) &\leq (r_2 - r_1)\unit \oplus (\Gamma(r_2\unit \oplus \Gamma(\sigma(r_1))) - \Gamma(r_1\unit \oplus \Gamma(\sigma(r_1)))) \\
	&\stackrel{\eqref{eq_sumtype_ineq}}{\leq} (r_2 - r_1)\unit \oplus (\Gamma(r_2\unit) - \Gamma(r_1\unit)),%
\end{align*}
which implies%
\begin{align*}
  \|\Gamma_{r_2}^2(\sigma(r_1)) - \Gamma_{r_1}^2(\sigma(r_1))\|_{\infty} \leq \max\{ |r_2 - r_1|, \|\Gamma(r_2\unit) - \Gamma(r_1\unit)\|_{\infty}\}.%
\end{align*}
It then follows that%
\begin{equation*}
  \|\sigma(r_2) - \sigma(r_1)\|_{\infty} \leq \frac{\max\{1,L_{\Gamma}\}}{1 - K}|r_1 - r_2| \mbox{\quad for all\ } r_1,r_2 \in J,%
\end{equation*}
showing that $\sigma$ is locally Lipschitz continuous. It is easy to see that our assumption on $\Gamma^2$ also implies that $\Gamma_r$ admits only one fixed point, and so $\sigma_* = \sigma^*$. The proof is complete.
\end{proof}

\begin{remark}
For general gain operators, it can be proved similarly that a path of decay exists if the assumptions (i)--(v) of Theorem \ref{thm_path_of_decay} are satisfied and $\Gamma_r^k$ is an order contraction for some $k \geq 1$. However, this condition is hard to check, in general.
\end{remark}

\begin{remark}
If we replace the gain operator $\Gamma$ by a slightly enlarged operator $\Gamma_{\omega} = \omega^{-1} \circ \Gamma$, where $\omega < \id$ is a $\KC_{\infty}$-function, and impose the assumptions of Theorem \ref{thm_path_of_decay} on this operator, it probably suffices to find a continuous selection of the set-valued mapping $r \mapsto \Fix(\Gamma_{\omega,r})$. The reason is that we can try to approximate such a selection $\sigma$ by a locally Lipschitz continuous path which satisfies properties (P2), (P3) and (P4) of a path of decay. Property (P1) can also be satisfied because of the little extra space gained by $\omega$. Indeed, $\sigma(r) = r\unit \oplus \omega^{-1}(\Gamma(\sigma(r)))$, and thus $\Gamma(\sigma(r)) \leq \omega(\sigma(r)) < \sigma(r)$. We leave the technical details of this construction to a future work.
\end{remark}

\section*{Conclusions}

In this paper, we studied a very general class of gain operators associated with infinite networks of finite-dimensional control systems. We developed a new approach to find points of decay and paths of decay for such operators, which are used for the construction of ISS Lyapunov functions via a small-gain approach as described in Theorem \ref{thm_smallgain_result}.%

We have introduced an augmented gain operator whose fixed points are precisely the points of decay for the original gain operator. In Proposition \ref{prop_smallgain_conds}, we have shown that the augmented gain operator has plenty of fixed points under certain uniform small-gain conditions. We also proved in Proposition \ref{prop_uges_implies_mbi} that these uniform small-gain conditions hold if the system induced by a slightly enlarged gain operator is exponentially stable. In general, this is certainly a too strong assumption, as for max-type operators much weaker assumptions suffice, see Proposition \ref{prop_hatgamma_ugs}.%

As an important prerequisite for our results on the existence of a path of decay, we described the uniform global asymptotic stability of the system induced by a gain operator in terms of small-gain-like conditions, see Proposition \ref{prop_ugas_char_part1} and \ref{prop_ugas_char_part2}. Since we did not obtain a complete characterization of the UGAS property, some open questions remain which we leave for future work.%

Our main result about the existence of a path of decay, i.e.~Theorem \ref{thm_path_of_decay}, requires, in particular, that the system induced by a slightly enlarged gain operator is uniformly globally asymptotically stable. However, in contrast to the known results for finite networks, it also requires the existence of a Lipschitz continuous selection of a certain set-valued mapping. The question when such a selection exists, in the general case, has not been answered. However, we showed that in several cases studied previously in the literature, our approach is able to recover the known results. Further studies are needed to make the approach fruitful for more general types of gain operators.

\bibliographystyle{abbrv}
\bibliography{literature}

\end{document}